\definecolor{darkgreen}{rgb}{0.0,0,0.9}
\newcolumntype{d}[1]{D{.}{.}{#1}}
\let\chapter\section
\DeclareMathAlphabet{\mathpzc}{OT1}{pzc}{m}{it}
\newtheorem{propo}{Proposition}[section]
\newtheorem{lemma}[propo]{Lemma}
\newtheorem{thm}[propo]{Theorem}
\newtheorem*{remark}{Remark}
\def\code#1{\texttt{#1}}
\def\w{{\boldsymbol{w}}}
\def\x{{\boldsymbol{x}}}
\def\u{{\boldsymbol{u}}}
\def\e{{\boldsymbol{e}}}
\def\bbeta{{\boldsymbol{\beta}}}
\def\balpha{{\boldsymbol{\alpha}}}
\def\btheta{{\boldsymbol{\theta}}}
\def\bSigma{{\boldsymbol{\Sigma}}}
\def\P{{\mathbb{P}}}
\def\supp{\textrm{supp}}
\def\xnew{\boldsymbol{x}_{\textrm{new}}}
\def\ynew{y_{\textrm{new}}}
\def\znew{z_{\textrm{new}}}
\def\hsigma{{\hsigma}}
\def\SMI{\tilde{\boldsymbol{I}}_{k}^{\beta}(\hbtheta)}
\def\PMI{\boldsymbol{I}_{k}^{\beta}(\btheta)}
\def\SPMI{\tilde{\boldsymbol{I}}_{k}^{\beta}(\btheta)}
\renewcommand{\P}{\mathbb P}
\newcommand{\onenorm}[1]{\left\|#1\right\|_{1}}
\newcommand{\twonorm}[1]{\left\|#1\right\|_{2}}
\newcommand{\infnorm}[1]{\left\|#1\right\|_{\infty}}
\newcommand{\qnorm}[1]{\left\|#1\right\|_{q}}
\newcommand{\infMnorm}[1]{\left|#1\right|_{\infty}}
\def\E{\mathbb{E}}
\def\eps{\varepsilon}
\def\u{\boldsymbol{u}}
\newcommand{\<}{\langle}
\renewcommand{\>}{\rangle}
\def\est{\eta_{{\sf est}}}
\def\hbtheta{\widehat{\btheta}}
\def\hbbeta{\widehat{\bbeta}}
\def\hbalpha{\widehat{\balpha}}
\def\hsigma{\widehat{\sigma}}
\def\reals{\mathbb{R}}
\def\event{\mathcal{E}}
\def\z{\boldsymbol{z}}
\def\bxi{\boldsymbol{\xi}}
\def\dist{\stackrel{d}{\Rightarrow}}
\def\tQ{\tilde{Q}}
\def\de{{\rm d}}
\title{Prediction Sets for High-Dimensional Mixture of Experts Models}
\author{Adel Javanmard, Simeng Shao, Jacob Bien\thanks{
    A.~Javanmard is partially supported by the Sloan Research Fellowship
in mathematics, Adobe Data Science Faculty Research Awards and the
NSF CAREER Award DMS-1844481. J.~Bien was supported in part by NSF CAREER Award DMS-1653017.}\\
Data Sciences and Operations, University of Southern California }%
\begin{document}

\maketitle

\begin{abstract}
Large datasets make it possible to build predictive models that can
capture heterogenous relationships between the response variable and
features.  The mixture of high-dimensional linear experts model
posits that observations come from a mixture of high-dimensional linear regression
models, where the mixture weights are themselves feature-dependent.
In this paper, we show how to construct valid prediction sets for
an $\ell_1$-penalized mixture of experts model in the high-dimensional
setting.  We make use of a debiasing procedure to account
for the bias induced by the penalization and propose a novel strategy for
combining intervals to form a prediction set with coverage guarantees in the mixture setting.
Synthetic examples and an application to the prediction of critical
temperatures of superconducting materials show our method to have
reliable practical performance.
\end{abstract}

\section{Introduction}\label{sec:intro}

In traditional statistics, we imagine a universal relationship between variables
that holds across an entire population; observations not
following this relationship are dismissed as outliers.  However, we
know that reality is more complex, with numerous subpopulations
likely exhibiting distinct behaviors.  As datasets grow in size, we become
better able to detect and properly model this heterogeneity.  The
mixture of regressions model \citep{quandt1978estimating} is an
important tool for extending linear regression to this
heterogeneity-aware setting.  For a random response $y\in\reals$ and a
random vector of predictors $\x\in\reals^p$, we imagine a latent subgroup
membership $z\in\{1,\ldots,K\}$ that determines the conditional
distribution of $y$ given $\x$:
\begin{align}
y|\x,z=k\sim N(\x^T\bbeta_k,\sigma_k^2).\label{eq:normal}
\end{align}

Making predictions with this model requires estimating for each
subgroup a coefficient vector $\bbeta_k$, an error variance
$\sigma_k^2$, and a group membership probability $\pi_k=\P(z=k)$.  The
mixture of experts model (MoE, \citealt{jordan1994}) is even more flexible,
allowing these group membership probabilities to depend on the
predictors as well:
\begin{align}
z|\x\sim
  \text{Multinomial}[\boldsymbol\pi(\x)]\qquad\text{with}\quad\pi_k(\x)=\frac{\exp{(\x^T\balpha_k)}}{\sum_{\ell=1}^K
  \exp{(\x^T\balpha_\ell)}}.\label{eq:multinomial}
\end{align}
This model is expressive enough to capture subpopulations that change
in prevalence depending on the conditions.  For example, \citet{hyun2020} develop a high-dimensional mixture of
experts approach to modeling phytoplankton subpopulations as a
function of environmental covariates in the ocean.  The (log) diameter of
the phytoplankton cells within each specific subpopulation are taken to
be Gaussian with mean depending on environmental covariates (expressed through nonzero
values of $\bbeta_1,\ldots,\bbeta_K$); however, the prevalence of
the different subpopulations also depends on these covariates (expressed through nonzero
values of $\balpha_1,\ldots,\balpha_K$).

When making predictions, it is valuable to be able to quantify one's
level of uncertainty.  In this paper, we develop the machinery
necessary to do so in the context of the high-dimensional mixture of
the experts model described above.  In particular, given a sample of $n$
observations from the model in
\eqref{eq:normal}--\eqref{eq:multinomial}, a confidence level $q\in(0,1)$, and a new predictor vector of
interest $\xnew$, we show how to form a
properly calibrated {\em prediction set} $\Omega_q(\xnew)$.  That is,
given a new draw $(\xnew,\ynew)$ from \eqref{eq:normal}--\eqref{eq:multinomial}, we have that
\begin{align}
\P(\ynew\in \Omega_q(\xnew)|\xnew)\ge 1-q. \label{eq:predset}
\end{align}
\begin{figure}
	\centering
	\scalebox{1}{\input{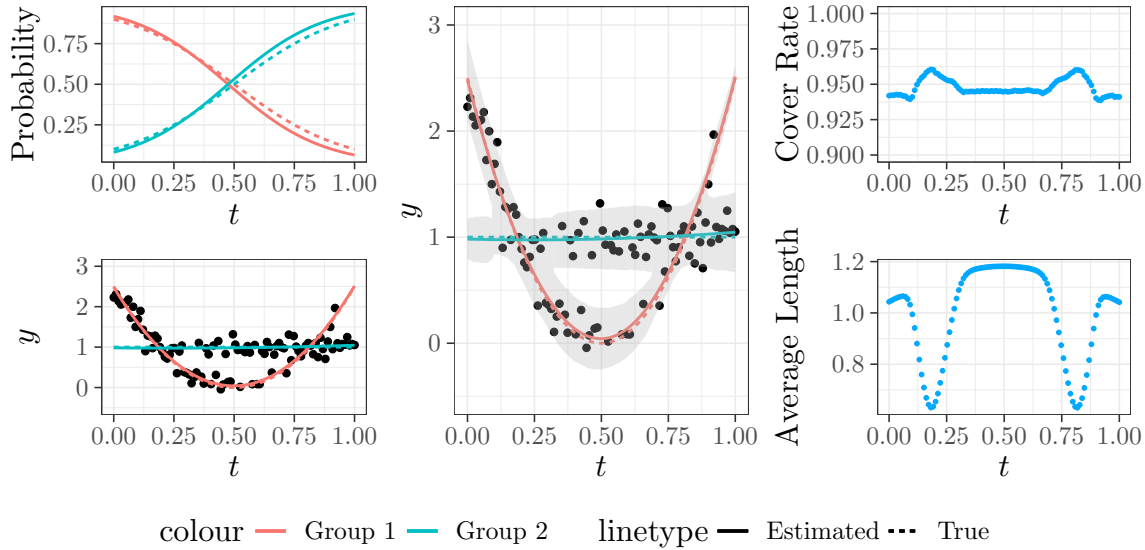}}
	\caption{(Top left) True (dashed) and estimated (solid) class probabilities
          $\pi_k(t)$ and $\hat\pi_k(t)$. (Bottom left) Data set of
          $n=100$ points  $\{(t,y_t)$ (black dots) with true (dashed)
          and estimated (solid) mean functions $\mu_k(t)$ and
          $\hat\mu_k(t)$. (Middle) Prediction set at $95\%$ confidence
          level $\Omega_{0.05}(t)$ for $t\in[0,1]$ (shaded
          area). (Right) Monte Carlo estimate of coverage and length
          of prediction sets $\Omega_{0.05}(t)$ across $500$ training
          sets and $1000$ $\ynew$ per training set.}
	\label{fig:intro_example}
\end{figure}

As an illustration, consider the toy example shown in Figure
\ref{fig:intro_example}.  There are $K=2$ subpopulations, one
following a quadratic relation and the other being constant (for
details of the construction, see Section \ref{sec:empirical}).  The
upper panel shows that the mixture weight on the quadratic
subpopulation decreases with increasing $t$.  This can also be seen by
inspecting the scatterplot and noting that for large $t$, most of the points are in
the constant subpopulation.  The solid lines show our estimated mixture
of experts model fit based on the points in the scatterplot.  Now
suppose we are about to observe a new point $\ynew$ at, say, $t_{\text{new}}=0.5$.
Can we form a set $\Omega_{0.95}(t_{\text{new}})$ that is guaranteed
to capture $\ynew$ at least 95\% of the time? Given that we do not
know which subpopulation $\ynew$ will be drawn from,
$\Omega_{0.95}(t_{\text{new}})$ will be a union of two intervals.  The
size and location of these intervals will depend on our estimates of
the subpopulation means and variances (governed by $t_{\text{new}}$ and our
estimates of $\bbeta_1$, $\bbeta_2$, $\sigma_1$, and $\sigma_2$) as
well as the estimated mixture weights (governed by $t_{\text{new}}$
and our estimates of $\balpha_1$ and $\balpha_2$).  The gray band in
the middle panel shows our constructed $\Omega_{0.95}(t_{\text{new}})$ as we vary
$t_{\text{new}}$ from 0 to 1.  When the means of the populations are
far apart, the prediction set is a union of two intervals, while the
set becomes a single interval when the means are close to each other.
The larger a subpopulation's mixture weight, the wider that interval
becomes.  This ``strategy'' is reasonable because if it's unlikely
that a point will fall in a certain subpopulation, one can get away
with less good coverage of that subpopulation (and thereby reduce the
overall size of the prediction set).  The rightmost panel shows the
result of a simulation in which we generated 500 training sets, each
time constructing $\Omega_{0.95}(t)$ as
a function of $t$; then, we generated 1000 $\ynew$ at each $t$ and
computed the coverage rate (averaging over the $500\cdot 1000$
repetitions for each $t$). This verifies that our procedure approximately attains the
nominal 95\% coverage.  The bottom panel shows the average size of the
prediction set.  Quite intuitively, the prediction set is smallest when the two subpopulations are
very close to each other.  One observes a bit of overcoverage when the
subpopulations are close, which makes sense since in this situation the
interval from one subpopulation can sometimes cover a point from the opposite subpopulation.

Constructing prediction sets in the context of a high-dimensional
mixture of experts model is challenging.  In fact, making any sort of precise statements about even the most
simple mixture of regression models is nontrivial.  For example, much
effort has gone into understanding the convergence and estimation
error of the expectation-maximization estimator \citep{dempster1977maximum}
used in fitting such models (\citealt{yi2014alternating,
  balakrishnan2017, klusowski2019, kwon2019, kwon2020}) as well as
being able to test whether there are two groups versus one \citep{zhu2004hypothesis}.  Adding
high dimensionality to the study of mixture of regression models
brings additional challenges.  \citet{Stadler2010} and
\citet{yi2015regularized} proposed different
$\ell_1$-regularized maximum likelihood estimators with accompanying
estimation error results.  \citet{wang2014high} take this a step further
and develop a truncation-based high-dimensional estimator with both
estimation error results and the ability to construct confidence
intervals for low-dimensional components of the parameter vector.  While their results hold for general
latent variable models, their application to mixture of regression
models is more of a proof of concept, with $K=2$, $\sigma_1=\sigma_2$
assumed known, $\pi_1=\pi_2=0.5$, and $\bbeta_1=-\bbeta_2$.
\citet{zhang2020} provide inference for individual coefficients and
differences of the form $\bbeta_{1j}-\bbeta_{2j}$
within the context of this model using an $\ell_1$ penalty.  They
generalize to the case of unknown mixture weight $\pi_1$, $\bbeta_1\neq-\bbeta_2$, and an
unknown covariance matrix for $X$ (which they take to be multivariate
normal), but they still assume $K=2$ and that the value
$\sigma_1=\sigma_2$ is known. While we are able to adopt in part a similar debiased approach,
we will highlight later why their technique, which works in the $K=2$
mixture of regression setting (which involves a single unknown mixture
parameter $\pi_1$) does not easily generalize to our setting of a
mixture of experts model, in which mixture weights depend on the
unknown parameter vectors $\balpha_1,\ldots, \balpha_K\in\reals^p$.

Furthermore, our interest in forming a prediction set \eqref{eq:predset} requires
the ability to make inferential statements about $\xnew^T\bbeta_k$,
not just low-dimensional components of $\bbeta_k$.  In this sense,
\citet{cai2021optimal} pursue a similar goal in performing inference for individualized
treatment effects $\xnew^T(\bbeta_1 - \bbeta_2)$; however, unlike the
mixture of regression setting, the group memberships are known in
their context.

To summarize, to the best of our knowledge, predictive inference in mixture of
expert models has not been addressed in the literature.  Furthermore,
we address this problem for general $K$ and in the high-dimensional
setting.  We  make use of ideas from the debiased lasso literature.
The debiasing approach for constructing confidence intervals for coefficients has been
widely used in  linear regression models in high-dimensional settings
(\citealt{javanmard2014confidence, javanmard2014hypothesis,
  zhang2014confidence, van2014asymptotically,
  javanmard2018debiasing}). In recent years, there has been work
on inference for general linear functions (\citealt{cai2017confidence,
  guo2021group, javanmard2020flexible, zhu2018linear}). In terms of
debiasing in the non-mixture setting, \citet{cai2017confidence,
  tripuraneni2019debiasing, athey2018approximate} proposed
bias-corrected estimators for a single linear regression model while, as
we have noted above, \citet{cai2021optimal} considers inference for
$\xnew^T (\bbeta_1 - \bbeta_2)$ in the case of two observed (i.e., non-latent) groups.

Interest in predictive inference has led to an active area of work on
conformal prediction.  These approaches are attractive for being
distribution-free and providing finite-sample coverage (see, e.g., \citealt{papadopoulos2002inductive,vovk2005,lei2018,romano2019}).  They rely on
very general ideas such as the exchangeability of draws from the
distribution.  However, the coverage that these conformal methods
attain is not conditional on $\xnew$ as in \eqref{eq:predset} but
rather holds marginally over $\xnew$:

\begin{align*}
\P(\ynew\in \Omega_q(\xnew))\ge 1-q.
\end{align*}

Indeed, it has been proven that to obtain finite length sets with conditional coverage,
one needs to make stronger assumptions
\citep{vovk2012conditional,lei2014distribution}.  In certain
applications, one specifically desires coverage of the form
\eqref{eq:predset} and assuming the parametric form
\eqref{eq:normal}--\eqref{eq:multinomial} can be a small price to
pay.  For example, in the oceanographic example of mixture of experts
\citep{hyun2020}, the mixture of Gaussians structure is visually
well-supported.  Prediction sets with conditional coverage are
desirable because we would like to be able to say that for a given set of
environmental conditions (e.g., at a specific temperature and salinity
level) our prediction set for the phytoplankton diameters will have a
95\% coverage guarantee.  Marginal coverage would mean that if we make
predictions over many randomly sampled environments, our coverage
would average out to 95\%.  The latter means, for example, that a
procedure could be overconfident (i.e., undercovering) at high
temperatures and underconfident (i.e., overcovering) at low
temperatures.  Setting conditional coverage \eqref{eq:predset} as the
goal guards against this undesirable property.

The rest of the paper is organized as follows.  In Section \ref{sec:methodology},
we describe our approach to constructing prediction sets.  This
involves estimating parameters using a penalized
expectation-maximization approach (Section \ref{sec:method}), then using
a debiasing technique on the coefficient vectors from each of the
component distributions (Section \ref{sec:debias}), and finally combining
$K$ intervals into a prediction set in a fashion that maintains proper
coverage (Section \ref{sec:prediction_sets}).  In Section \ref{sec:theory}, we provide
theoretical guarantees that establish the asymptotic validity of our
constructed prediction sets and provide insight into the conditions
under which we expect nominal coverage to hold.  In Section \ref{sec:empirical}
we investigate the empirical performance of our prediction sets in a
variety of settings.  Section \ref{sec:data} shows our sets and evaluates
their performance empirically in predicting the
critical temperatures of superconducting materials. We conclude this
section with some notation.

\bigskip

\noindent\textbf{Notation.} Throughout the paper, we use $[p]$ for the set of integers $1, ... , p$. We use $\e_i$ to denote the $i$-th standard basis vector. For a vector $\x \in \mathbb{R}^p$, we denote $\qnorm{\x} = \left(\sum_{j=1}^p |x_j|^q\right)^{1/q}$ for $q > 0$ and $\|\x\|_0 = |\supp(\x)|$, $\infnorm{\x} = \max_{j\in [p]} |x_j|$. For a matrix $\boldsymbol{A} \in \mathbb{R}^{n \times n}$, we use $\infMnorm{\boldsymbol{A}} = \max_{1 \leq i,j \leq n}|A_{i,j}|$.  For sequences $a_n$ and $b_n$, 
we use the notation $a_n \asymp b_n$ to indicate that $a_n$ is bounded both above and below by $b_n$ asymptotically, i,e, for some constants $C_0, C_1$ and for all $n\ge n_{n_0}$ we have $C_0\le |a_n/b_n|\le C_1$. In addition, we write $a_n = O_p(b_n)$ if for any $\eps>0$, there exists $C_\eps>0$ and large enough $n_\eps$ such that $\P(|a_n/b_n| >C_\eps)<\eps$, for all $n\ge n_\eps$. 
 we write $a_n = o_p(b_n)$ if $a_n/b_n$ converges to zero in probability, i.e., $\lim_{n \to \infty} \P\left(|a_n/b_n| \geq \eps
 \right) =0, \forall \eps>0$. The notation $\dist$ indicates convergence in distribution.

We use the notation $\|\cdot\|_{\psi_2}, \|\cdot\|_{\psi_1}$ to refer to the sub-Gaussian and sub-exponential norms respectively.
Specifically, for a random variable $X$, we let
\[
\|X\|_{\psi_1} = \sup_{q\ge 1} \;q^{-1} (\E|X|^q)^{1/q}\,, \quad 
\|X\|_{\psi_2} = \sup_{q\ge 1} \;q^{-1/2} (\E|X|^q)^{1/q}\,.
\]
For a random vector $\x$, its sub-Gaussian and sub-exponential norms are defined as
\[
\|\x\|_{\psi_1} = \sup_{\twonorm{\u}\le 1} \;\|\<\x,\u\>\|_{\psi_2} \,, \quad 
\|\x\|_{\psi_1} = \sup_{\twonorm{\u}\le 1} \;\|\<\x,\u\>\|_{\psi_1}\,.
\]

\section{Methodology}\label{sec:methodology}

We begin (in Section \ref{sec:method}) with a review of a penalized
maximum likelihood procedure for the MoE model
\eqref{eq:normal}--\eqref{eq:multinomial} such as is used in \citet{hyun2020}.  Sections \ref{sec:debias}
and \ref{sec:prediction_sets} then introduce our proposed methodology
for forming prediction sets in this context.

\subsection{Penalized EM-based Estimator}\label{sec:method}
The expectation-maximization (EM) algorithm
(\citealt{dempster1977maximum}) is a common heuristic when faced with
maximum-likelihood problems involving missing data, especially in the
form of latent variables. The algorithm operates in an iterative
fashion, alternating between the E (expectation) step and M
(maximization) step, while managing to increase the objective function. 

Assume $n$ data points $\{(\x_i, y_i)\}_{i=1}^n$ are drawn
independently from the MoE model \eqref{eq:normal}--\eqref{eq:multinomial}. The EM algorithm aims at maximizing the log-likelihood
\begin{align}\label{eq:log-like}
\ell(\btheta)  = \frac{1}{n} \sum_{i=1}^n \log\left[\sum_{k=1}^K \pi_{k} (\x_i) \cdot \phi_{k} (\x_i, y_i) \right],
\end{align}
where $\btheta = \left(\{\bbeta_k, \balpha_k, \sigma_k\}_{k\in[K]}
\right)\in \reals^{(2p+1)K}$ represents the model parameters, $\phi_k(\x_i, y_i)$ is given by
\begin{align}\label{eq:phi-def}
\phi_{k}(\x_i, y_i) = \frac{1}{\sqrt{2\pi}\sigma_k} \exp\left( -\frac{(y_i- \x_i^{T}\bbeta_k)^2}{2\sigma_k^2}\right),
\end{align}
and $\pi_k(\x_i)$ is given in \eqref{eq:multinomial}.
The log-likelihood $\ell(\btheta)$ is not concave even as a function
of $\bbeta_k$, treating $\balpha_k$
and $\sigma_k$ as fixed. This is due to the marginalizing over the
latent cluster memberships $z_i$. The EM algorithm instead employs a
minorize-maximize approach \citep{hunter2004tutorial} in which a
minorizer to the log-likelihood is repeatedly constructed and 
maximized. More specifically, given some fixed $\hbtheta$ it maximizes a lower bound function $Q(\btheta|\hbtheta)$ over $\btheta$ to make $\ell(\btheta) - \ell(\hbtheta)$ large. We refer to~\cite{hastie2009elements} for a more detailed introduction to EM algorithm and derivation of the function $Q(\btheta|\hbtheta)$ and here only provide the description of the EM algorithm for the MoE model.

Let $\gamma_{i,k}(\btheta)$ be the probability that $z_i = k$ conditioned on the observed variable $(\x_i,y_i)$, i.e.,
\[
\gamma_{i,k}(\btheta):= \P\left(z_i = k | \x_i, y_i  \right) = \frac{\pi_{k}(\x_i) \cdot \phi_{k}(\x_i, y_i)}{ \sum_{\ell = 1}^K \pi_{\ell}(\x_i) \cdot \phi_{\ell}(\x_i, y_i)}.
\]
The function $\gamma_{i,k}(\btheta)$ is sometimes referred to as
\emph{responsibilities} in the literature (see, e.g. \cite{hyun2020})
because it quantifies how ``responsible'' group $k$ is for point $i$.

Consider the function $Q(\btheta|\hbtheta)$ defined as
\begin{align}\label{eq:Q}
Q(\btheta|\hbtheta)  =  -\frac{1}{n}\sum_{i=1}^n \sum_{k=1}^K \gamma_{i,k}(\hbtheta) \left[\log \pi_{k}(\x_i)  + \log\phi_{k}(\x_i, y_i) \right] + \sum_{k=1}^K \lambda_{\balpha}\|\balpha_k\|_1 + \sum_{k=1}^K \lambda_{\bbeta}\|\bbeta_k\|_1,
\end{align}
where the regularization terms $\onenorm{\bbeta_k},
\onenorm{\balpha_k}$ are added to enforce sparsity on the estimated
parameters and allow for applications in the high-dimensional sparse regime.

The EM algorithm iterates between estimating the conditional membership probabilities $\gamma_{i,k}(\hbtheta)$, and reducing \eqref{eq:Q} by updating the parameters $\hbtheta$. The details of the updates are given below: 
\begin{itemize}
	
	\item E step: Estimating the conditional responsibility of membership based on the latest parameter estimate.
	\[
	\gamma_{i,k}(\hbtheta) = \frac{\frac{\hat{\pi}_{i,k}}{\hsigma_k} \exp\Big( -\frac{(y_i- \x_i^{T}\hbbeta_k)^2}{2\hsigma_k^2}\Big)}{\sum_{\ell = 1}^K \frac{\hat{\pi}_{i,\ell}}{\hsigma_{\ell}} \exp\Big( -\frac{(y_i- \x_i^{T}\hbbeta_{\ell})^2}{2\hsigma_{\ell}^2}\Big)}.
	\]
	
	\item M step: Updating $\hbtheta$ to lower the objective value in  \eqref{eq:Q}.
	\begin{itemize}
		
		\item For each $k=1,\ldots, K$, update $\hbbeta_{k}$:
		\begin{align}\label{eq:beta}
		\hbbeta_{k}= \underset{\bbeta}{\arg \min} \sum_{i=1}^{n} \frac{\gamma_{i,k}(\hbtheta)}{\hsigma^2_k} \frac{(y_i - \x_i^{T}\bbeta)^2}{2n} + \lambda_{\bbeta} \|\bbeta\|_1.
		\end{align}
		
		\item Update $\{\hbalpha_{k}\}_{k=1}^K$:
		\begin{align}\label{eq:alpha}
		\{\hbalpha_{k}\}= \underset{\{\balpha_k\}}{\arg \min} -\frac{1}{n}\sum_{i=1}^{n}\Big( \sum_{k=1}^K \gamma_{i,k}(\hbtheta) \x_i^T \balpha_k - \log\Big(\sum_{\ell=1}^K \exp{(\x_i^T \balpha_{\ell})}\Big)\Big) + \lambda_{\balpha} \sum_{k=1}^K \|\balpha_k\|_1.
		\end{align}

		\item For each $k=1,\ldots, K$, update $\hsigma_k$:	
		\begin{align}
		\hsigma^2_{k} = \frac{\sum_{i=1}^{n} \gamma_{i,k}(\hbtheta) (y_i - \x_i^{T} \hbbeta_k )^2}{\sum_{i=1}^n \gamma_{i,k}(\hbtheta)}.
		\end{align}
	\end{itemize}
\end{itemize}

As noted in \citet{hyun2020}, this M-step update represents a decreasing in the objective but not a
minimization since $(\bbeta_k,\sigma_k)$ would need to be jointly optimized.  The algorithm terminates when the improvement in \eqref{eq:Q} is below some threshold.

In practice, we use the R package \code{flowmix} (\citealt{flowmix}) to carry out the EM algorithm. 

\subsection{Debiased Prediction}\label{sec:debias}

From \eqref{eq:normal}--\eqref{eq:multinomial}, we
have that
$$
\ynew|\xnew\sim N(\xnew^T\bbeta_k,\sigma_k^2) \text{ with probability } \pi_k(\xnew).
$$
Because of the penalization, $\hat{\Gamma}_k:= \xnew^T \hbbeta_k$ is a
biased estimate of $\Gamma_k:= \xnew^T \bbeta_k$, and thus would give
biased predictions for $\ynew$ even if we knew that $\znew=k$.  We
therefore propose a debaising procedure.  Given an arbitrary vector $\u_k$, we construct a debiased prediction
\begin{align}\label{def:debias}
\hat{\Gamma}^d_k := \xnew^T \hbbeta_k + \u_k^T \frac{1}{n}\sum_i \frac{\partial \ell_i (\hbtheta)}{\partial \bbeta_k},
\end{align}
where $\partial/\partial \bbeta_k$ denotes the gradient with respect
to $\bbeta_k$.

Our proposed choice for $\u_k$ depends on the estimated sample Fisher
information matrix, which is given by 
\begin{align}\label{eq:I}
\tilde{\boldsymbol{I}}(\hbtheta) = \frac{1}{n} \sum_{i=1}^n \nabla \ell_i(\hbtheta) \nabla \ell_i^T(\hbtheta),
\end{align}
where $\ell_i(\hbtheta)$ are the summands in \eqref{eq:log-like} evaluated at $\hbtheta = \left(\{\hbbeta_k, \hbalpha_k, \hsigma_k\}_{k\in[K]} \right)$, and
\[\nabla \ell_i = \Big(\Big\{\frac{\partial \ell_i}{\partial \bbeta_k}\Big\}_{k=1}^K, \Big\{\frac{\partial \ell_i}{\partial \balpha_k}\Big\}_{k=1}^K, \Big\{\frac{\partial \ell_i}{\partial \sigma_k}\Big\}_{k=1}^K\Big)^T\,.\]
We would like a choice of $\u_k$ that will lead to a narrower interval.
This intuition (which is described in greater detail below) suggests
choosing $\u_k$ as the solution to the optimization problem
\begin{subequations}\label{eq:debiase_opt}
\begin{align*}
&\min_{\u} \ \u^T \SMI  \u   \qquad \qquad \\
&\ \textrm{s.t.} \ \ \sup_{\omega \in \mathcal{C}} \left| \left\langle \omega, \tilde{\bSigma}_k \u - \xnew\right\rangle \right| \leq \lambda_k \|\xnew\|_2 , \quad 
\onenorm{\u}\le L\twonorm{\xnew},\tag{\ref{eq:debiase_opt}}
\end{align*}
\end{subequations}
where we define $\tilde{\bSigma}_k := \frac{1}{n} \sum_{i=1}^n  \frac{\gamma_{i,k}(\hbtheta)}{\hsigma_k^2}\x_i \x^T_i$, 
$L$ is a sufficiently large constant, and $\lambda_k$ is a tuning parameter (in Section~\ref{sec:theory}, we discuss the proper rate of $\lambda_k$ and the choice of $L$). In addition, $\mathcal{C} = \{ \e_1, ..., \e_p, \xnew/\|\xnew\|_2\}$, where $\e_i$ denotes the $i$-th standard Euclidean basis vector.  The matrix $\SMI$ is the estimated Fisher information matrix, constrained to $\bbeta_k$, which is given by
\begin{align}\label{eq:sample-Fisher}
\SMI = \frac{1}{n} \sum_{i=1}^n \left( \gamma_{i,k}(\hbtheta)\frac{y_i - \x_i^T \hbbeta_k}{\hsigma_k^2} \x_i\right) \left( \gamma_{i,k}(\hbtheta)\frac{y_i - \x_i^T \hbbeta_k}{\hsigma_k^2} \x_i\right)^T.
\end{align}
The above characterization follows from the definition of $\tilde{I}(\hbtheta)$ given by~\eqref{eq:I} along with identity~\eqref{eq:Q-grad}, restricted to class $k$.

\begin{remark}
  \label{rem:split}
 Because $\tilde{\bSigma}_k$ and $\SMI$ depend on both the response
 vector ${\bf y}$ and the covariate matrix $\mathbf{X}$, the solution $\u_k$ to \eqref{eq:debiase_opt} is also dependent
on them. This is in contrast to the optimization problem proposed by
\citet{javanmard2014confidence} for constructing the debiasing
direction, as it only involved covariates and hence conditional on the
covariate matrix $\mathbf{X}$, the debiasing direction was independent
of the response vector ${\bf y}$. 

To deal with the complications resulting from the dependence of $\u_k$
on $(\mathbf{X},\mathbf{y})$, we do sample splitting. Specifically, we
split the data into two sets, $\mathcal{D}_1$ and $\mathcal{D}_2$,
where optimization \eqref{eq:debiase_opt} is solved on
$\mathcal{D}_1$, and $\hbbeta_k,\hat{\Gamma}^d_k$ are calculated using
samples in $\mathcal{D}_2$, per \eqref{def:debias}.
\end{remark}

The above approach to construct the direction $\u_k$ is generalized
from \citet{javanmard2014confidence, zhang2014confidence,
  cai2017confidence} and aims to find a direction that minimizes the
variance while controlling the bias. However, the goal in
\citet{javanmard2014confidence, zhang2014confidence,
  cai2017confidence} is to establish inference for coefficients, which
differs from our goal of establishing inference for prediction. Closer
to our aim here, we follow the proposal of \citet{cai2021optimal} to construct
prediction intervals, under a linear regression model; however, our
setting of a MoE model is more complex and requires novel methodology and analysis.  In particular, our optimization problem differs from \citet{cai2021optimal} in that we use a constrained Fisher information matrix, $\SMI$, in the objective of \eqref{eq:debiase_opt}, and we allow the matrix $\tilde{\bSigma}_k$ in the constraint of \eqref{eq:debiase_opt} to be different from the matrix in the objective, while \citet{cai2021optimal} keeps them the same.

The first constraint in~\eqref{eq:debiase_opt} can be decomposed as
\begin{align}
\infnorm{\tilde{\bSigma}_k \u - \xnew} &\leq \lambda_k \twonorm{\xnew} \label{eq:debiase_opt1}\\
\left| \left\langle \xnew, \tilde{\bSigma}_k \u - \xnew\right\rangle \right| &\leq \lambda_k \twonorm{\xnew}^2 \label{eq:debiase_opt2}
\end{align}
Similar to the general intuition behind the quadratic optimization of
\citet{javanmard2014confidence}, the objective value $\u^T \SMI  \u$
is related to the variance of $\hat{\Gamma}^d_k$, and the constraint
\eqref{eq:debiase_opt1} relates to its bias. So the optimization is
indeed aiming to minimize the variance (and hence the length of
prediction intervals which will be constructed based on
$\hat{\Gamma}^d_k$), while controlling the bias of
$\hat{\Gamma}^d_k$. That said, in the analysis we need to show that
the bias is dominated by the variance term and therefore need to
establish a lower bound on the variance. The
constraint~\eqref{eq:debiase_opt2} is added for this step. It makes
the feasible set of the optimization problem smaller and makes it
possible to lower bound the optimal value of the objective (see Proposition~\ref{lemma:fisher_RE} for technical arguments). This idea originates from \citet{cai2021optimal}, which introduced
the ``variance-enhancement projection direction''. While the general intuition carries over to our current setting, characterizing the statistical properties of $\hat{\Gamma}^d_k$ under the MoE requires a rather intricate and technical analysis.

We denote the estimated variance for $\hat{\Gamma}^d_k$ as
\begin{align}
\widehat{V}_k = \frac{1}{n}\u^T_k \SMI \u_k,  \quad k=1, ..., K,
\end{align}
and the prediction variance estimate (conditional on $\znew=k$) as 
\begin{align}\label{eq:b_k}
b^2_k: = \widehat{V}_k + \hsigma^2_k, \quad k=1, ..., K.
\end{align}

\subsection{Prediction Sets}\label{sec:prediction_sets}

Recall that our goal is to construct a $100(1 - q) \%$
prediction set $\Omega_q(\xnew)$ satisfying \eqref{eq:predset}. By the
nature of the mixture, we seek a prediction set of the form 
\[
\Omega_q(\xnew) = \cup_{k=1}^K \mathcal{L}_k,
\]
where each $\mathcal{L}_k = [l_k, u_k]$ is centered at a  debiased
estimator $\hat{\Gamma}^d_k$. Regarding the length of
$\Omega_q(\xnew)$ as our budget it is clear at an intuitive level that
we should spend more on the mixture components to which
$(\xnew,\ynew)$ is more likely to belong, i.e., to those groups $k$ with larger $\pi_k(\xnew)$.
To this end, we form a probability density function using a weighted mixture of Gaussian densities:
\begin{align}\label{eq:mixed_gaussian}
f(y) = \sum_{k=1}^K \frac{\hat{\pi}_k(\xnew)}{b_k} \cdot \phi\left( \frac{ y-\hat{\Gamma}^d_k }{b_k}\right),
\end{align}
where $\phi(\cdot)$ is the standard normal pdf. The particular form of this density is justified in the proof of Theorem \ref{thm:coverage} given in Section \ref{proof:thm:coverage}. We give a schematic illustration of $f$ in Figure~\ref{fig:illustration}. 

\begin{figure}[t]
\centering
\includegraphics[width=6cm]{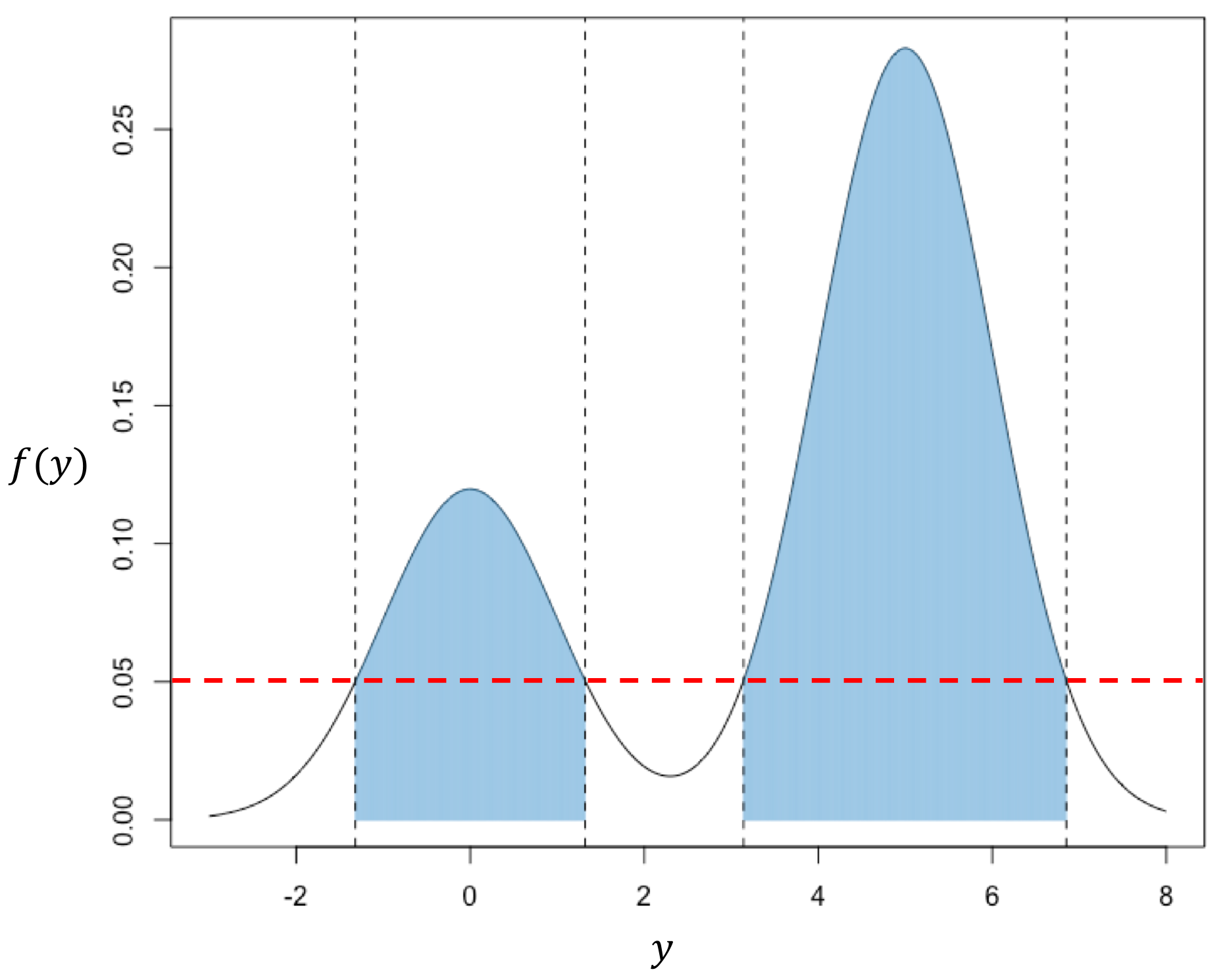}
\caption{An illustration of the mixture density $f$, given
  by~\eqref{eq:mixed_gaussian} for $K=2$ groups. The cutoff level
  (indicated by the red line) is the highest level such that the region
    shaded in blue has area at least $1-q$.}
    \label{fig:illustration}
\end{figure}

We seek a set of intervals $[y^{-}_1, y^{+}_1], ..., [y^{-}_K, y^{+}_K]$, such that 
\begin{align}
\sum_{k=1}^K \int_{y^{-}_k}^{y^{+}_k} f(y) dy = 1-q,
\end{align}
while minimizing $\sum_{k=1}^K |y^{+}_k - y^{-}_k |$. For this, we
start from a large cutoff (the horizontal line in the figure) and decrease that until the area under $f$ and
corresponding to $y$ with $f(y)$ above the cutoff (the blue region in
the figure)
is $1-q$. To approximate the area we take a discretization approach as outlined below. 

Assume without loss of generality that $\hat{\Gamma}_1^d \leq ... \leq \hat{\Gamma}_K^d$. We start by considering the interval 
\[
\mathcal{Q} = [\hat{\Gamma}_1^d -  b_1 z_{1-q/2}  , \quad \hat{\Gamma}_K^d +  b_K z_{1-q/2} ],
\]
which we know has probability at least $1-q$.  We then divide
$\mathcal{Q}$ into segments of size $\delta$, denoted as
$\mathcal{Q}_1, ..., \mathcal{Q}_{\frac{|\mathcal{Q}|}{\delta}}$. The
area under the curve $f(y)$ confined to the segment $\mathcal{Q}_i$ is
approximately $\delta h_i$, where $h_i$ is the density $f$ evaluated at the midpoint point of $\mathcal{Q}_i$. We next sort $h_i$'s corresponding to each segment in decreasing order, i.e., 
\[
h_{(1)} \geq h_{(2)} \geq ... \geq h_{(\frac{|\mathcal{Q}|}{\delta})},
\]
and find the smallest $N$ such that 
\begin{align}\label{eq:sum_hi}
\delta\sum_{i=1}^N h_{(i)} \geq {1-q}.
\end{align}
We return $\Omega_q(\xnew): =\cup_{i=1}^N \ \mathcal{Q}_{(i)}$ as the prediction set.

\begin{algorithm}

	\begin{algorithmic}[1]
		\caption{Constructing prediction set $\Omega_q(\xnew)$
                  with $(1-q)$ coverage}\label{algo}
		\REQUIRE Confidence level $1-q$, discretization scale $\delta$, debiased estimate of each center $\hat{\Gamma}^d_k$, prediction standard error estimate $b_k$, membership probability estimate $\hat{\pi}_k(\xnew), k=1, ..., K$.
		\ENSURE Prediction set with $(1-q)$ coverage
		
		\STATE Form a weighted mixture of Gaussian densities
		\begin{align*}
		f(y) = \sum_{k=1}^K \frac{\hat{\pi}_k (\xnew)}{b_k} \cdot \phi\left( \frac{ y-\hat{\Gamma}^d_k }{b_k}\right).
		\end{align*}
		\STATE Choose a large enough interval $\mathcal{Q}$ so that the integral $\int_{\mathcal{Q}} f(y)\geq 1-q$, i.e.
		\[\mathcal{Q} = [\hat{\Gamma}_1^d -  b_1 z_{1-q/2}, \quad \hat{\Gamma}_K^d +  b_K z_{1-q/2} ]\,.\]
		\STATE Divide $\mathcal{Q}$ into segments of size $\delta$. Let $y_i$ be the midpoint of $\mathcal{Q}_i$ and $h_i = f(y_i)$.
		\STATE Sort $h_i$'s in decreasing order,
		\[
		h_{(1)} \geq h_{(2)} \geq ... \geq h_{(\frac{|\mathcal{Q}|}{\delta})}.
		\]
		\STATE Find the smallest $N$ such that 
		\begin{align*}
		\delta \sum_{i=1}^N  h_{(i)} \geq {1-q}.
		\end{align*}
		\STATE Return the union of the corresponding segments 
		\begin{align}\label{eq:P-set}
		\Omega_q(\xnew) = \cup_{i=1}^N \ \mathcal{Q}_{(i)}\,.
		\end{align}
	\end{algorithmic}
	
\end{algorithm}

\section{Theoretical Guarantees}\label{sec:theory}

We a consider sequence of problems where the sample size $n\to\infty$
and covariate dimension $p = p(n)\to\infty$, while the number of
groups $K$ is bounded, and we establish asymptotic validity of our prediction sets for the MoE model \eqref{eq:normal}--\eqref{eq:multinomial}. 
We first lay out several technical assumptions on the estimation error
of $\hbtheta$, the random covariate vectors $\{\x_i\}_{i\in [n]}$, and
the model parameters $\btheta$.  

\begin{itemize}
	\item (A1) \emph{Parameter estimation $\est$.} Suppose that
	\[
	\max_{k\in[K]}\onenorm{\hbtheta_k - \btheta_k}  = \max_{k\in[K]}\left(\onenorm{\hbbeta_k - \bbeta_k}+
	\onenorm{\hbalpha_k - \balpha_k} + |\hsigma_k - \sigma_k|\right) = O_p(\est)\,,
	\]
	where $\est$ scales with $n, p$ and potentially other
        structure associated with the parameters (e.g. sparsity levels). We assume that   
	\begin{align}\label{eq:main-condition}
	\sqrt{n} \log(np)  \est^2 = o(1)\,.
	\end{align}
	\item (A2) \emph{Distribution of features.} We have a
          positive-semidefinite matrix $\bSigma\in\reals^{p\times p}$
          (or more precisely a sequence of matrices of growing dimension) with bounded operator norm, $\|\bSigma\|_{{\rm op}}\le C_{\Sigma}$, for a constant $C_{\Sigma}$ as $p\to \infty$. Suppose that $\bSigma^{-1/2}\x_i$ are independent sub-Gaussian vectors, with mean zero and sub-Gaussian norm $\|\bSigma^{-1/2}\x_i\|_{\psi_2} = O(1)$.
	\item (A3) \emph{Bounded noise and signal.} The noise variances $\sigma_k^2$ are strictly positive and bounded constants for $k\in[K]$. We also assume that $\max_{\ell, k\in[K]}\|\bbeta_k - \bbeta_{\ell}\|_2 = O(1)$.
\end{itemize}
Condition (A1) assumes an $\ell_1$-consistency rate for the estimate
$\hbtheta$. Consistency presupposes identifiability, which in the case
of the $\{\balpha_k\}$ may require additional assumptions (since
$\{\balpha_{k}+{\bf c}:k\in [K]\}$ corresponds to the same $\pi_k(\x)$
as $\{\balpha_{k}:k\in [K]\}$). Our theoretical results apply to any estimator which
satisfy condition \eqref{eq:main-condition}. The proposed EM estimator
in Section~\ref{sec:method} is just one specific choice. Instead of
$\ell_1$-regularization, one can follow other variants based on
iterative truncation. For example,~\cite{wang2014high} analyzes a
mixture of regression model with two groups and proposes a truncated
EM algorithm (with a gradient ascent implementation) which achieves $\est  = s\sqrt{\log(p)\log(n)/n}$, with $s$ the sparsity level of model parameters. The work~\cite{zhang2020} derives a similar $\ell_1$-consistency rate for the high-dimensional mixed linear regression with two groups, for an iterative EM procedure which performs $\ell_1$ regularization at each step.  
While the mixture of experts model is more complicated we conjecture that a similar rate for $\est$ carries over to this setting. Under such conjecture, condition \eqref{eq:main-condition} simplifies to 
\[
\frac{s^2 \log(np) \log(p)\log(n)}{\sqrt{n}} = o(1)\,.
\]

Condition (A2) is on the random covariate vectors $\x_i$ and is a common assumption in high-dimensional statistical estimation; see e.g.~\cite{buhlmann2011statistics}. Condition (A3) on the pairwise distances $\twonorm{\bbeta_k - \bbeta_\ell}$ and noise variance $\sigma_k^2$ is to control the heterogeneity of data coming from different groups.

Our first theorem is on asymptotic normality of the bias-corrected
estimators $\hat{\Gamma}^d_k$ defined in \eqref{def:debias} and involves the matrix
$$
\bSigma_{k}=
\mathbb{E}\left[\frac{\gamma_{1k}\left(\btheta\right)}{\sigma_{k}^{2}}\x_{1}\x_{1}^{T}\right].
$$
\begin{thm}\label{thm:normality}
Suppose that $\frac{\onenorm{\bSigma^{-1}_k\xnew}}{\twonorm{\xnew}} =
O(1)$ and $\log(p) =o(n^{1/4}/\sqrt{\log(n)})$. Let $\u_k$ be the solution to the optimization problem
\eqref{eq:debiase_opt} with constant $L\ge \frac{\onenorm{\bSigma^{-1}_k\xnew}}{\twonorm{\xnew}}$ and $\lambda_{k}\asymp \est \log(np) + \sqrt{\log(p)/n}$.
	Under Assumptions (A1), (A2), (A3), and Remark
        \ref{rem:split}, we have
	\begin{align}\label{eq:normality}
	\frac{\sqrt{n}(\hat{\Gamma}_k^d - \Gamma_k)}{ \sqrt{\u^T_k \SMI \u_k}} \dist N( 0, 1),
	\end{align}
	where $\SMI$, given by~\eqref{eq:sample-Fisher}, is the sample Fisher information matrix constrained to entries corresponding to $\bbeta_k$.
\end{thm}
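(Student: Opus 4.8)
The plan is to expand $\hat{\Gamma}_k^d-\Gamma_k$ about the truth, isolate an asymptotically linear leading term to which a central limit theorem applies, and show every other contribution is negligible after normalization. First I would invoke the gradient identity~\eqref{eq:Q-grad} restricted to class $k$ to write $\partial \ell_i/\partial\bbeta_k = \gamma_{i,k}(\btheta)\,\frac{y_i-\x_i^T\bbeta_k}{\sigma_k^2}\,\x_i$, so that the score summands $S_{i,k}:=\gamma_{i,k}(\btheta)\frac{y_i-\x_i^T\bbeta_k}{\sigma_k^2}\x_i$ satisfy $\E[S_{i,k}\mid\x_i]=0$ (because $\E[(y_i-\x_i^T\bbeta_k)\mid z_i=k,\x_i]=0$) with covariance $\PMI=\E[S_{1,k}S_{1,k}^T]$. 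Substituting $y_i-\x_i^T\hbbeta_k=(y_i-\x_i^T\bbeta_k)-\x_i^T(\hbbeta_k-\bbeta_k)$ into~\eqref{def:debias} and recognizing $\tilde{\bSigma}_k=\frac1n\sum_i\frac{\gamma_{i,k}(\hbtheta)}{\hsigma_k^2}\x_i\x_i^T$, I obtain the decomposition
\begin{align*}
\hat{\Gamma}_k^d-\Gamma_k
= \underbrace{-(\tilde{\bSigma}_k\u_k-\xnew)^T(\hbbeta_k-\bbeta_k)}_{B_k}
+ \underbrace{\u_k^T\tfrac1n\textstyle\sum_i S_{i,k}}_{T_k}
+ R_k,
\end{align*}
where the remainder $R_k$ collects the effect of evaluating the responsibilities and variance at $\hbtheta$ rather than $\btheta$.

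For the bias term $B_k$ I would use H\"older's inequality with the constraint~\eqref{eq:debiase_opt1}, giving $|B_k|\le\infnorm{\tilde{\bSigma}_k\u_k-\xnew}\,\onenorm{\hbbeta_k-\bbeta_k}\le\lambda_k\twonorm{\xnew}\,O_p(\est)$ by~(A1). Since $\lambda_k\asymp\est\log(np)+\sqrt{\log(p)/n}$, the scaling~\eqref{eq:main-condition} makes $\sqrt{n}\,|B_k|=o_p(\twonorm{\xnew})$. This is dominated once one has the matching lower bound $\u_k^T\PMI\u_k\gtrsim\twonorm{\xnew}^2$ on the normalizing variance, which is precisely what the variance-enhancement constraint~\eqref{eq:debiase_opt2} secures through Proposition~\ref{lemma:fisher_RE}.

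For the leading term $T_k$, I would condition on $\mathcal{D}_1$, so that by the sample splitting of Remark~\ref{rem:split} the direction $\u_k$ is fixed while $\{S_{i,k}\}_{i\in\mathcal{D}_2}$ are i.i.d.\ and mean zero. The summands $\u_k^TS_{i,k}$ are products of a sub-Gaussian vector (by~(A2)) with the residual $y_i-\x_i^T\bbeta_k$, which is sub-Gaussian given $\x_i$ by~(A3); hence they are sub-exponential and a Lyapunov/Berry--Esseen argument yields $\sqrt{n}\,T_k/\sqrt{\u_k^T\PMI\u_k}\dist N(0,1)$, the third-moment condition being controlled via $\onenorm{\u_k}\le L\twonorm{\xnew}$ and the variance lower bound. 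I would then pass from $\PMI$ to the plug-in $\SMI$ in the denominator by showing $\u_k^T(\SMI-\PMI)\u_k=o_p(\u_k^T\PMI\u_k)$, combining $\ell_\infty$-concentration of the empirical Fisher information at $\btheta$ with a perturbation bound for moving from $\btheta$ to $\hbtheta$, both contracted against $\u_k$ using its $\ell_1$ bound. Slutsky's theorem then assembles the three pieces.

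The main obstacle is controlling $R_k$, the error from replacing $\gamma_{i,k}(\btheta)$ by $\gamma_{i,k}(\hbtheta)$ and $\sigma_k$ by $\hsigma_k$ inside the averaged score. Here the plan is to Taylor expand the responsibilities---nonlinear in $\btheta$ through the softmax weights $\pi_\ell(\x_i)$ and the Gaussian factors $\phi_\ell(\x_i,y_i)$---and the factor $\hsigma_k^{-2}$ about the truth. The first-order terms are again averages of mean-zero quantities that, after scaling by $\sqrt{n}$ and contracting against $\u_k$, are $o_p(1)$; the second-order terms are quadratic in $\hbtheta-\btheta$, hence $O_p(\est^2)$ per coordinate, and once multiplied by the $\log(np)$ factor arising from uniform control of the $\u_k^T(\cdot)$ contraction they are exactly the contributions that condition~\eqref{eq:main-condition} is tailored to make negligible. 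The delicate part is establishing uniform Lipschitz and boundedness estimates for $\gamma_{i,k}$ and its first two derivatives over a neighborhood of $\btheta$, together with high-probability control of $\max_i\twonorm{\x_i}$ and of the residuals, which is where the sub-Gaussian assumption~(A2) and the rate $\log(p)=o(n^{1/4}/\sqrt{\log n})$ enter.
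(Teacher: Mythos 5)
Your proposal follows essentially the same route as the paper: the identical decomposition into a bias term controlled by H\"older's inequality together with the constraint \eqref{eq:debiase_opt1} and Assumption (A1), a score term evaluated at the true parameter handled by a CLT after conditioning on $\mathcal{D}_1$ (sample splitting), a variance lower bound $\u_k^T\SMI\u_k\gtrsim\twonorm{\xnew}^2$ extracted from the quadratic constraint \eqref{eq:debiase_opt2}, a concentration step replacing $\PMI$ by $\SMI$ in the denominator, and Slutsky assembling the pieces. The only minor difference is in the remainder $R_k$: rather than Taylor-expanding the responsibilities to second order and exploiting mean-zero first-order terms, the paper applies a cruder uniform first-order Lipschitz bound on $\gamma_{i,k}(\cdot)/\sigma_k^2$ contracted against $\onenorm{\u_k}\le L\twonorm{\xnew}$, which already suffices under \eqref{eq:main-condition}.
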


Now that we have established the asymptotic normality of our debiased estimators, we are ready to prove that our prediction sets provide proper asymptotic coverage.

\begin{thm}\label{thm:coverage} 
	Under the assumptions of Theorem~\ref{thm:normality}, the prediction set $\Omega_q(\xnew)$ has asymptotically valid coverage. Specifically, fix $\gamma>0$ arbitrarily small and in Algorithm~\ref{algo} set the discretization scale $\delta$ so that
	\[
	\delta\le 11 \sqrt{\gamma \min_{k\in[K]} (b_k)/{\rm Len}(\mathcal{Q})}\,,
	\]
	with ${\rm Len}(\mathcal{Q})$ representing the length of interval $\mathcal{Q}$.
	Then, for any $\xnew$ and its response $\ynew$ generated according to MoE, we have
	\[
	\lim_{n\to\infty} \P\left(\ynew \in \Omega_q(\xnew)|\xnew\right) \ge 1- q -\gamma\,.
      \]
\end{thm}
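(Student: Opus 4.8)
The plan is to show that the probability mass the construction assigns under the fitted mixture density $f$ in \eqref{eq:mixed_gaussian} transfers, asymptotically, into genuine coverage of $\ynew$, with the only slack being the $\gamma$ introduced by discretization. I would split the argument into three parts: (a) bound the discretization error of Algorithm~\ref{algo} by $\gamma$; (b) show that, conditional on the latent membership, $\ynew$ behaves like a draw from the corresponding component of $f$, so that $f$ is the correct predictive density up to $o(1)$; and (c) assemble these into the stated coverage bound.

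For (a), the algorithm guarantees $\delta\sum_{i\le N}h_{(i)}\ge 1-q$. Since $f$ is bounded, $\|f\|_\infty\le(\sqrt{2\pi}\min_k b_k)^{-1}$, and smooth on each segment, a Riemann/midpoint estimate controls $\big|\int_{\Omega_q(\xnew)}f - \delta\sum_{i\le N}h_{(i)}\big|$ by a quantity of order $\delta^2\,{\rm Len}(\mathcal{Q})/\min_k b_k$; the prescribed $\delta\le 11\sqrt{\gamma\min_k b_k/{\rm Len}(\mathcal{Q})}$ then forces this error below $\gamma$, so that $\int_{\Omega_q(\xnew)}f\,dy\ge 1-q-\gamma$ on the high-probability event where $b_k$ and ${\rm Len}(\mathcal{Q})$ are well behaved.

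Part (b) is the crux and is where the particular form of $f$ gets justified. Conditioning on $\znew=k$ gives $\ynew=\Gamma_k+\sigma_k Z$ with $Z\sim N(0,1)$ independent of the training data; by Theorem~\ref{thm:normality}, computed on the held-out split $\mathcal{D}_2$, we have $\hat{\Gamma}^d_k=\Gamma_k+\sqrt{\widehat{V}_k}\,W_k$ with $W_k\dist N(0,1)$ and $W_k$ independent of $Z$ (sample splitting plus the fact that $\ynew$ is a fresh draw). Consequently the standardized residual $(\ynew-\hat{\Gamma}^d_k)/b_k$ is asymptotically $N(0,1)$, because the inflation $b_k^2=\hat\sigma_k^2+\widehat{V}_k$ is chosen to match the variance $\sigma_k^2+\widehat{V}_k$ of $\ynew-\hat{\Gamma}^d_k$ (using $\hat\sigma_k\to\sigma_k$ from (A1)). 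This is exactly the statement that, given membership $k$, $\ynew$ is distributed like the $k$-th Gaussian component $N(\hat{\Gamma}^d_k,b_k^2)$ of $f$; combined with $\hat\pi_k(\xnew)\to\pi_k(\xnew)$ (continuity of the softmax and (A1)), it is what makes $f$ the effective predictive density and what licenses \eqref{eq:mixed_gaussian}.

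The main obstacle — and the step requiring genuine care rather than routine estimates — is that the region $\Omega_q(\xnew)$ is itself a function of $\hat{\Gamma}^d_1,\dots,\hat{\Gamma}^d_K$, so the new-point residual and the set are not independent and one cannot simply integrate $f$ against a fixed region. The resolution exploits that the inflated width $b_k$ makes each component interval of $\Omega_q(\xnew)$ translate with its own center $\hat{\Gamma}^d_k$ at precisely the scale of $\ynew-\hat{\Gamma}^d_k$: in the $K=1$ case this is an exact cancellation, $\P(|\ynew-\hat{\Gamma}^d_1|\le z_{1-q/2}b_1)=\P(|N(0,b_1^2)|\le z_{1-q/2}b_1)=1-q$, with no appeal to $\widehat{V}_1\to0$. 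For general $K$ I would establish this by using the joint CLT for $(\hat{\Gamma}^d_1,\dots,\hat{\Gamma}^d_K)$ together with (A1) to show that the configuration of centers, widths, and weights entering the level set that defines $\Omega_q(\xnew)$ concentrates, so that near each mode the $K=1$ cancellation applies up to $o(1)$, while $\sum_k|\hat\pi_k(\xnew)-\pi_k(\xnew)|=o_p(1)$ and the fluctuation of $b_k$ contribute only $o(1)$. This yields $\P(\ynew\in\Omega_q(\xnew)\mid\xnew)=\int_{\Omega_q(\xnew)}f\,dy+o(1)$, and combining with part (a) gives $\liminf_{n\to\infty}\P(\ynew\in\Omega_q(\xnew)\mid\xnew)\ge 1-q-\gamma$.
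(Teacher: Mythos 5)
Your parts (a) and (b) track the paper's proof closely: the paper bounds the Riemann-sum error by $0.0075\,{\rm Len}(\mathcal{Q})\,\delta^2/\min_k b_k\le\gamma$ via a second-derivative bound on $f$ (your $\delta^2{\rm Len}(\mathcal{Q})/\min_k b_k$ estimate is the same, and the constant $11$ is just $\sqrt{1/0.0075}$), and it combines the fresh noise $\eps\sim N(0,\sigma_k^2)$ with the asymptotic fluctuation of $\hat{\Gamma}^d_k$ from Theorem~\ref{thm:normality} into a single normal of variance $\widehat{V}_k+\sigma_k^2$, so that $(\ynew-\hat{\Gamma}^d_k)/b_k$ is asymptotically pivotal given $\znew=k$. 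That is exactly your justification of the form of $f$ and of $b_k^2=\widehat V_k+\hsigma_k^2$.

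The gap is in your part (c). You correctly identify that $\Omega_q(\xnew)$ depends on the fitted quantities, but your proposed resolution --- prove the two-sided statement $\P(\ynew\in\Omega_q(\xnew)\mid\xnew)=\int_{\Omega_q(\xnew)}f\,dy+o(1)$ by a joint CLT plus an assertion that ``the configuration of centers, widths, and weights concentrates so that near each mode the $K=1$ cancellation applies'' --- is not a proof and is also more than the theorem needs. Controlling the full integral over $\Omega_q(\xnew)$ would require quantifying how much mass a point from component $k$ places in component $j$'s interval for $j\ne k$, and ``the configuration concentrates'' does not obviously hold (nor help) when $\widehat V_k$ is not negligible relative to $\sigma_k^2$. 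The paper avoids all of this with a one-line monotonicity step that your write-up misses: conditional on $\znew=k$, it lower-bounds $\P(\ynew\in\Omega_q(\xnew))\ge\P(\ynew\in\mathcal{L}_k(q))$, where $\mathcal{L}_k(q)=[y_k^-,y_k^+]$ is the $k$-th component's own interval. After standardizing, this probability tends to $\Phi(u_k)-\Phi(l_k)$ with $l_k=(y_k^--\hat{\Gamma}^d_k)/b_k$, $u_k=(y_k^+-\hat{\Gamma}^d_k)/b_k$, and the weighted sum $\sum_k\pi_k(\xnew)\bigl(\Phi(u_k)-\Phi(l_k)\bigr)$ is, by the very definition of $f$ in \eqref{eq:mixed_gaussian}, equal to $\sum_k\int_{y_k^-}^{y_k^+}f(y)\,dy=\int_{\Omega_q(\xnew)}f(y)\,dy$, which the algorithm guarantees is at least $1-q$ minus the discretization error \emph{whatever} the realized (data-dependent) centers, widths, and weights turn out to be. So no concentration of the configuration is needed; only a lower bound per component, summed against the mixture weights. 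You should restructure part (c) around this inclusion $\mathcal{L}_k(q)\subseteq\Omega_q(\xnew)$ rather than attempting the exact asymptotic identity.
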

Note that choosing $\gamma$ arbitrarily small we get a coverage
arbitrarily close to $1-q$.
We refer to Section~\ref{sec:thm-proof} for the proof of Theorems~\ref{thm:normality} and \ref{thm:coverage}.

\section{Numerical Study}\label{sec:empirical}

In Section \ref{sec:low-dim}, we return to the low-dimensional example given in
Section \ref{sec:intro} and consider several variations to build
greater understanding of the behavior of our intervals.  In Section
\ref{sec:high-dim}, we assess the performance of our procedures in a high-dimensional example.

\subsection{A Low-Dimensional Example}\label{sec:low-dim}

Figure \ref{fig:intro_example} shows a two-group example where the
mean functions of the two groups are
$$
\mu_1(t)=10(t-0.5)^2\qquad\text{and}\qquad\mu_2(t)=1,
$$
the error variances are $\sigma_k^2=0.15^2$,
and the log odds of being in the first group is given by
$$
\log\left[\frac{\pi_1(t)}{1-\pi_1(t)}\right]=\log(9) - 2 \log(9) \cdot t,
$$
for $n=100$ equally spaced values of $t$ ranging from 0 to 1.  This means
that $\pi_1(t)$ decreases from 0.9 to 0.1.
Writing this MoE model in the notation of
\eqref{eq:normal}--\eqref{eq:multinomial}, we have $\x=(1,t,t^2)^T$,
$\bbeta_1=(2.5,-10,10)^T$, $\bbeta_2=(1,0,0)^T$,
$\balpha_1=(\log9,-2\log9,0)^T$, and $\balpha_2=(0,0,0)^T$.
This initial setting is perfectly symmetric in the two groups other
than the difference in mean functions.  For example, the first group at $t=0$ is as
common as the second group is at $t=1$.  This symmetry is manifest in
the width of the intervals: In the middle panel, the width of the interval around the first
group at $t=0$ matches that of the second group at $t=1$.  The
symmetry is also apparent in the third panel plots.

To build our intuition, we explore the effect of breaking this
symmetry between groups.  For example, suppose that the error variances in the two groups are
not equal.  In Figure \ref{fig:example_2}, we take $\sigma_1=0.1$ and $\sigma_2=0.2$.
While the empirical coverage is maintained
around the nominal $0.95$ level, the average length of the prediction
set $\Omega_{0.05}(t)$ is no longer symmetric around $t=0.5$.  This
makes intuitive sense since the class imbalance means that there is more
uncertainty in estimating $\bbeta_2$ than $\bbeta_1$; thus, at
$t=1$, where more of the prediction set is devoted to the second group
(since $\pi_2(1)>\pi_1(1)$), the average length of $\Omega_{0.5}(1)$
will be larger compared to that of $\Omega_{0.5}(0)$.
\begin{figure}
	\centering
	\scalebox{1}{\input{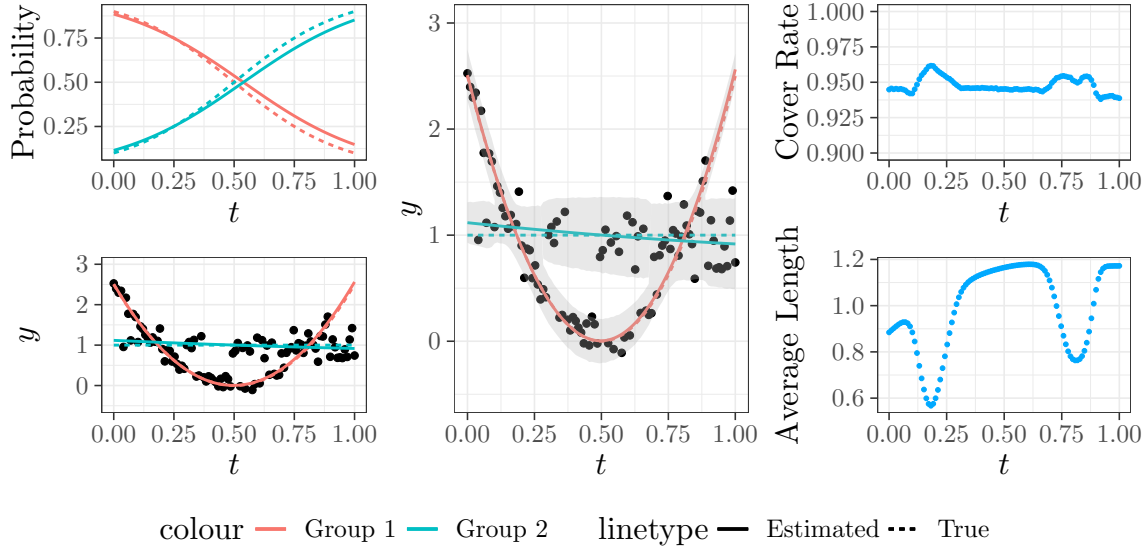}}
	\caption{Unequal error variance case ($\sigma_1<\sigma_2$).
          See caption of Figure \ref{fig:intro_example} for
          description of panels.}
	\label{fig:example_2}
\end{figure}

Another way to break the symmetry would be by considering class
imbalance.  We again assume $\sigma_1=\sigma_2=0.15$, but now suppose
that overall about 60\% of observations belong to the
first group.  In particular, we take instead $\balpha_1=\log10 \cdot
(1,-\log4,0)^T$, so that $\pi_1(t)$ ranges from about $0.91$ at $t=0$
to about $0.29$ at $t=1$. Figure \ref{fig:example_1} shows the effect
of this class imbalance.  We see the same increasing length as in the
previous example despite the error variances being equal.  In this
example, it is the class imbalance that leads to greater uncertainty in estimating $\bbeta_2$.

\begin{figure}
	\centering
	\scalebox{1}{\input{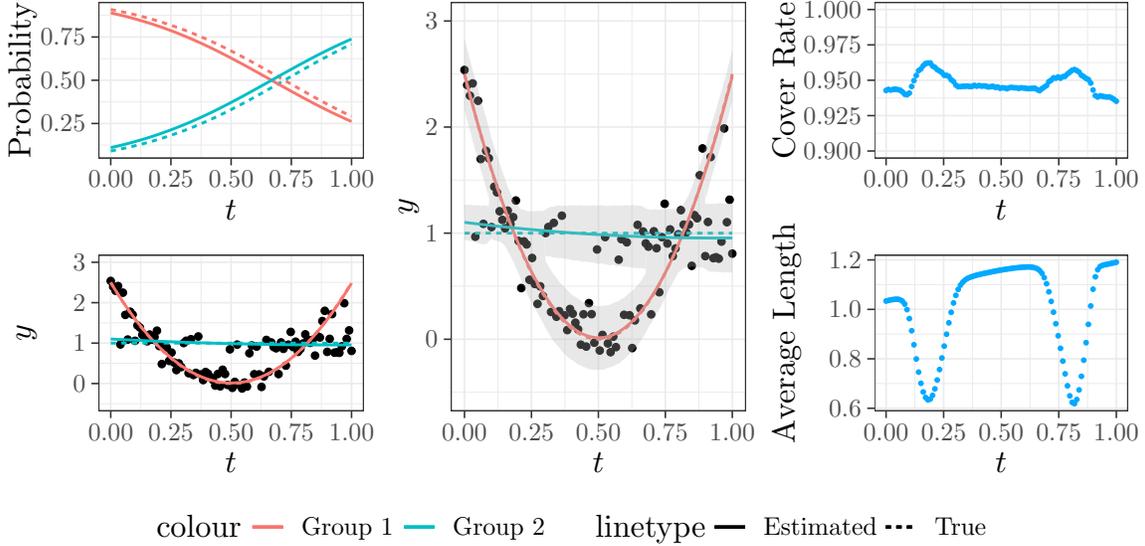}}
	\caption{Class imbalance case ($\E[\pi_1(\x)] > \E[\pi_2(\x)]$).
          See caption of Figure \ref{fig:intro_example} for
          description of panels.}
	\label{fig:example_1}
\end{figure}

\subsection{High-Dimensional Case}\label{sec:high-dim}

We consider a high-dimensional case where $K=2$ and $p=501$ (including
the intercept).  The
oceanographic application in \citet{hyun2020} has repeated
observations at multiple $\x_t$, and we mimic that setup with $T =
150$ feature vectors $\x_t$, and $n_t = 5$ observations per $\x_t$ for
a total of $n=750$ measurements.  This is high-dimensional
since $\btheta$ has dimension $K(2p+1)=2002$.
We take $x_{t1}=1$ and $x_{tj}$ for $j>1$ to be independent standard
Gaussians.  We generate $n_t$ responses $y_{it}$ for each $\x_t$
according to the MoE model \eqref{eq:normal}--\eqref{eq:multinomial}
with $\sigma_1=\sigma_2=1$ and 
\begin{align*}
\bbeta_1 &=& &(&-2&,& 4&,& -2&,& -4&,&  6&,&  2&,& 0&,&0&,&0&,&0&,&0&,&{\bf 0}_{490}^T&)^T,&\\  
\bbeta_{2} &=& &(&2&,& 0&,& 0&,& 0&,& 0&,& 0&,& 4&,& -2&,& -4&,&  6&,&  2&,& {\bf 0}_{490}^T&)^T,&
\end{align*}
and
\begin{align*}
  \balpha_1 &=& &(&  {\bf 0}_{491}&,& 0&,& 0&,& 0&,& 0&,& 0&,& 0.7&,& 0.7&,& 0.7&,& 0.7&,& 0.7&)^T,&\\
\balpha_2 &=& &(&{\bf 0}_{491}&,& -0.7&,& -0.7&,& -0.7&,& -0.7&,& -0.7&,& 0&,& 0&,& 0&,& 0&,& 0&)^T.&
\end{align*}

We estimate the model using the penalized EM algorithm described in
Section \ref{sec:method} as implemented in the \code{flowmix} R
package \citep{flowmix}. We perform five-fold cross validation to
choose the parameters $(\lambda_{\balpha},\lambda_{\bbeta})$ from a
$10 \times 10$ logarithmically-spaced grid.  While in Section
\ref{sec:debias} we remark that sample splitting avoids the
complications resulting from $\u_k$'s dependence on $({\bf X},{\bf
  y})$, empirically we find that coverage is attained even if we
ignore this dependence.  Therefore, in this and all numerical results
we do not use sample splitting for the debiasing step.

To evaluate our method, we generate $100$ independent $\xnew \in
\mathbb{R}^{p}$. For each $\xnew$, we compute
$\Omega_{0.05}(\xnew)$ and record its length. We then generate $100$ independent
$y_{\text{new}, i}$'s for each $\xnew$ and record the proportion of
$y_{\text{new}, i}$'s falling into the prediction set for that $\xnew$.

We repeat the above procedure $500$ times, keeping the $100$ $\xnew$'s the same. For each $\xnew$, we compute

\begin{enumerate}
	\item the average length of prediction sets (across the 500 runs), and 
	\item the coverage probability (proportion across the 500 runs
          and 100 $y_{\text{new}, i}$).
\end{enumerate}

\begin{figure}
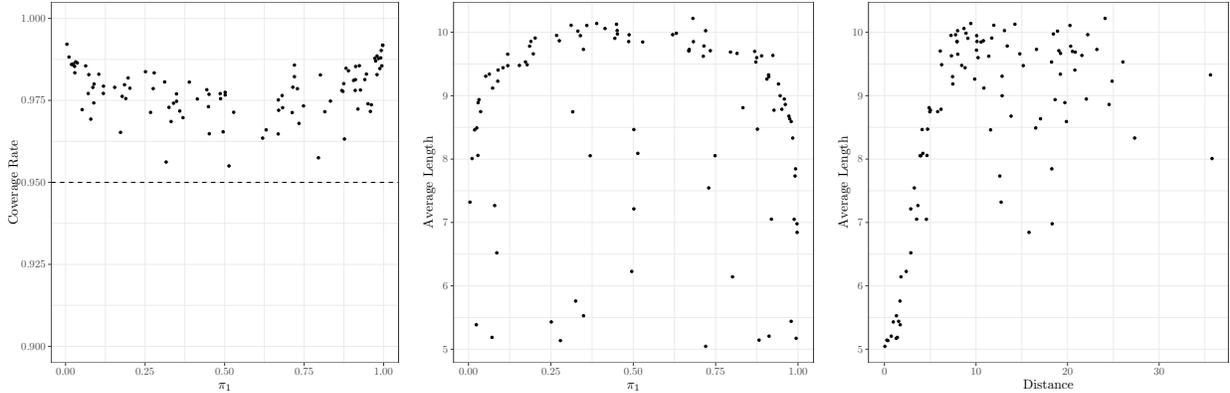

	\centering
	\scalebox{0.42}{\input{figs/k_2_1.tex}}
	\scalebox{0.42}{\input{figs/k_2_2.tex}}
	\scalebox{0.42}{\input{figs/k_2_3.tex}}

	\caption{High-dimensional simulation.  Every point corresponds
          to a different $\xnew\in\reals^p$.  The empirical coverage and average
          length of the prediction set $\Omega_{0.05}(\xnew)$ is computed over 500 training sets and 100
          $y_{\text{new}}$ values for each $\xnew$.}
	\label{fig:high-dim}
      \end{figure}
      
In the left and middle panels of Figure \ref{fig:high-dim}, we plot
these quantities as a function of $\pi_1(\xnew)$, the true probability
of being drawn from cluster 1 for each $\xnew$.  As desired, the
coverage rate of our prediction sets meet the $95\%$ nominal level,
regardless of $\pi_1(\xnew)$. We observe that the
prediction sets tend to be twice as long when $\pi_1(\xnew)\approx0.5$
compared to at the extremes.  This is likely because at an extreme the
less common group's interval can be very narrow without hurting
coverage whereas when the two groups are balanced, both intervals are
needed.

In the right panel of Figure \ref{fig:high-dim}, we plot the average
length of prediction intervals against the distances between the two
group means, i.e.,

\[
\left|\xnew^T ( \bbeta_1 - \bbeta_2)\right|.
\]
Recall that in the low-dimensional examples (Figures
\ref{fig:intro_example}, \ref{fig:example_2}, and \ref{fig:example_1}) we observed a marked
decrease in the prediction set length when the means crossed each
other.  A similar phenomenon is apparent here.
There is a linear increasing trend when $\left|\xnew^T ( \bbeta_1 -
  \bbeta_2)\right|$ goes from 0 to 10.  As suggested in the middle panel,
the individual intervals have average length around 5.  Thus, this
linear increase represents the two overlapping intervals gradually
being pulled apart.  At a
distance of 10, they no longer overlap, which explains the leveling of
this trend.  The variability in length seen for distances greater than
10 can be explained, for example, by differing values of $\pi_1(\xnew)$.

\section{Superconductivity Data Application}\label{sec:data}

We apply our method to the superconductivity data provided in
\citet{hamidieh2018data}. This dataset contains the critical
temperature (in Kelvin) and a set of $p=81$ attributes for about 21,000
materials. The attributes used as predictors are elemental property
statistics and electronic structures of attributes. We center and
scale each predictor column and we take the response to be
$\log(1+\text{temperature})$.  The log transform makes the data less
skewed right and adding 1 Kelvin to each temperature can be thought of
as replacing the log of extremely
low temperatures (some are less than $1 mK$) with 0.

We randomly split the observations into a training set of $n=200$ (used
for estimating model parameters, cross validation of
$\lambda_{\balpha}$ and $\lambda_{\bbeta})$, and forming the prediction sets), a
validation set of size 1000 (used to choose $K$), and a test set of
about 20,000 observation (for measuring the coverage
of our prediction sets).  Table \ref{table:MSE_K} shows the mean
squared prediction error, computed on the validation set, for $K$ ranging from 1
to 5.  To make predictions at a given $\x_{\text{validation}}$, we use
$$
\x_{\text{validation}}^T\hbbeta_{\hat k(\x_{\text{validation}})} \qquad\text{where}\qquad
\hat
k(\x_{\text{validation}})=\arg\max_k\hat\pi_k(\x_{\text{validation}})
$$
is the class with highest estimated probability.   The 
prediction errors on the validation set
suggest that $K=2$ may be a suitable choice.

\begin{table}[h!]
	\centering
	\begin{tabular}{p{2cm}|p{2cm}} 
		\hline
		K & Prediction error  \\ 
		\hline
		1 & 8.953 \\ 
		\textbf{2} & \textbf{0.867}\\
		3 & 0.978\\
		4 & 1.318\\
		5 & 0.869\\
		\hline
	\end{tabular}
	\caption{Prediction errors computed on a validation set for fitted models where $K=1,2,3,4,5$.}
	\label{table:MSE_K}
\end{table}

For each observation $(\xnew,\ynew)$ in the test set, we form
$\Omega_{0.05}(\xnew)$ and note whether
$\ynew\in\Omega_{0.05}(\xnew)$.  Figure \ref{fig:sc_100_pred_sets}
displays the prediction sets for a random subset of 100 of the 20,000
intervals formed on the test set. We see
that $\Omega_{0.05}(\xnew)$ is often a single interval, although it
also occasionally the union of two intervals.  The overall coverage on the test set is $97.1\%$.  The average length
of $\Omega_{0.05}(\xnew)$ (after being transformed back from
log-values) is around 42 Kelvin.

\begin{figure}
	\centering
	\scalebox{0.7}{\input{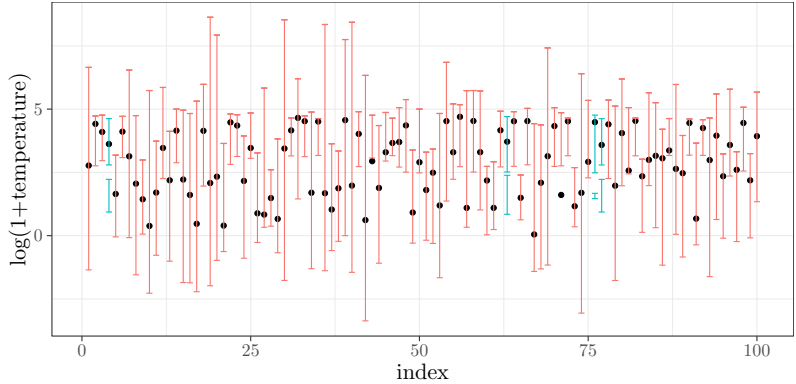}}
	\caption{Plot of prediction sets for $100$ randomly sampled data points in the test set. The black points are the $\log(1+ \text{temperature})$ and the bars correspond to the prediction sets for each observation. We use red when there is only one interval and cyan when there are two intervals.}
	\label{fig:sc_100_pred_sets}
\end{figure}

The $97.1\%$ coverage is averaged over all $\xnew$ and yet our
prediction sets are designed for conditional coverage in the sense of
\eqref{eq:predset}.  This stronger form of coverage implies that we
can get coverage on subsets of observations defined by $\xnew$, i.e.
$$
\P(\ynew\in\Omega_q(\xnew)|\xnew\in\mathcal S)\ge 1-q.
$$
The predictor that is most correlated with critical temperature is the
weighted standard deviation of thermal conductivity.
We divide the range of this variable into 5 equally-spaced
sub-intervals, and divide test data points into 5 subgroups
accordingly. Table \ref{table:CR_thermal} confirms that our prediction
sets meet the nominal level within each subgroup.

\begin{table}[h!]
	\centering
	\begin{tabular}{|c|c|c|} 
		\hline
		Subgroup & Number of data & Coverage Rate  \\ 
		\hline
		1 & 3098 & 95.9\%\\ 
		2 &  4039 & 97.2\%\\
		3 & 3960 & 97.1\% \\
		4 & 3999 & 95.6\% \\
		5 & 4000 & 94.8\% \\
		\hline
	\end{tabular}
	\caption{Coverage rates of $95\%$ prediction sets conditional
          on subgroups defined by the predictor ``weighted standard deviation of thermal conductivity.''}
	\label{table:CR_thermal}
\end{table}

\section{Conclusion}

We have shown how to construct prediction sets for the
high-dimensional mixture of experts model.  Mixture models are
important for capturing the heterogeneity that is present in many
real-world situations.
While in small data samples it was common to dismiss deviations from
the norm as outliers, in large data sets it becomes possible to
use models that can identify and model these subgroups.  While mixture
of regression models allow for such heterogeneity-aware predictive
modeling, they assume that the relative sizes of the subgroups are
fixed.  Importantly, mixture of experts models remove this assumption
and allow the prevalence of different subgroups to
depend on the features.  This generalization is essential in many
situations from 
ecology, where the relative proportions of different subpopulations
depends on environmental covariates \citep{hyun2020}, to politics,
where the political composition depends on demographic and geographic
variables.

Our focus on conditional coverage can be crucial in certain
applications.  For example, \citet{romano2019malice} emphasizes the
importance of ensuring that all subpopulations enjoy the same coverage
guarantees and cast this as a fairness issue when the
subpopulations are defined based on a protected attribute.

\appendix

\section{Proof of Main Theorems}\label{sec:thm-proof}
\subsection{Proof of Theorem~\ref{thm:normality}}
	
	Before we prove the asymptotic normality, we start by presenting a proposition establishing that the optimization problem \eqref{eq:debiase_opt} is feasible. We postpone its proof to Section \ref{section:proof_feasibility}.
	
	\begin{propo}
	\label{lemma:feasibility}(Feasibility)
		Under the assumptions of Theorem \ref{thm:normality}, there exists $\u$ such that\[
		\sup_{\boldsymbol{\omega}\in\mathcal{C}}\left|\left\langle \boldsymbol{\omega},\tilde{\Sigma}_{k}\u-\xnew\right\rangle \right|\le\lambda_{k}\left\Vert \xnew \right\Vert _{2}\text{ and }\left\Vert \u\right\Vert _{1}\le L\twonorm{\xnew}\]
		is satisfied for $\lambda_{k}\asymp \est \log(np) + \sqrt{\log(p)/n}$, $L\ge \frac{\onenorm{\bSigma^{-1}_k\xnew}}{\twonorm{\xnew}}$, and $\mathcal{C}=\left\{ \boldsymbol{e}_{1},\dots,\boldsymbol{e}_{p},\xnew/\left\Vert \xnew\right\Vert _{2}\right\}$.
	\end{propo}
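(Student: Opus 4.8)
The plan is to exhibit the explicit candidate $\u^\ast = \bSigma_k^{-1}\xnew$ and verify that it satisfies both constraints with probability tending to one. The $\ell_1$ bound is immediate, since $\onenorm{\u^\ast} = \onenorm{\bSigma_k^{-1}\xnew} \le L\twonorm{\xnew}$ holds by the hypothesis $L \ge \onenorm{\bSigma_k^{-1}\xnew}/\twonorm{\xnew}$. All of the work therefore goes into the first constraint. Because $\bSigma_k\u^\ast = \xnew$ exactly, I would write $\tilde{\bSigma}_k\u^\ast - \xnew = (\tilde{\bSigma}_k - \bSigma_k)\bSigma_k^{-1}\xnew$, so it suffices to bound $\sup_{\omega\in\mathcal{C}}|\langle \omega, (\tilde{\bSigma}_k - \bSigma_k)\bSigma_k^{-1}\xnew\rangle|$ by $\lambda_k\twonorm{\xnew}$. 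Since $\mathcal{C} = \{\e_1,\dots,\e_p,\xnew/\twonorm{\xnew}\}$, this reduces to controlling the entrywise quantity $\infnorm{(\tilde{\bSigma}_k - \bSigma_k)\bSigma_k^{-1}\xnew}$ together with the single bilinear form $\twonorm{\xnew}^{-1}\,|\xnew^T(\tilde{\bSigma}_k - \bSigma_k)\bSigma_k^{-1}\xnew|$; I abbreviate $\v := \bSigma_k^{-1}\xnew$.

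The key device is to introduce the oracle sample matrix $\hat{\bSigma}_k := \frac1n\sum_i \frac{\gamma_{i,k}(\btheta)}{\sigma_k^2}\x_i\x_i^T$ built from the true parameter, and to split $\tilde{\bSigma}_k - \bSigma_k = (\tilde{\bSigma}_k - \hat{\bSigma}_k) + (\hat{\bSigma}_k - \bSigma_k)$. The second term is pure sampling error: for the fixed direction $\v$, each coordinate $\e_j^T(\hat{\bSigma}_k - \bSigma_k)\v$ and the quadratic form $\xnew^T(\hat{\bSigma}_k - \bSigma_k)\v$ are centered averages of sub-exponential variables, namely products of sub-Gaussian coordinates of $\x_i$ times the bounded weight $\gamma_{i,k}(\btheta)/\sigma_k^2$. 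A Bernstein-type inequality under (A2)--(A3), followed by a union bound over the $p$ coordinates, yields the $\sqrt{\log(p)/n}\,\twonorm{\xnew}$ contribution to $\lambda_k$.

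The estimation-error term $\tilde{\bSigma}_k - \hat{\bSigma}_k$ is where the main effort lies, and it is what produces the $\est\log(np)$ piece of $\lambda_k$. Writing $w_{i,k}(\btheta) := \gamma_{i,k}(\btheta)/\sigma_k^2$, I would linearize $w_{i,k}(\hbtheta) - w_{i,k}(\btheta)$ in the parameter. Differentiating the softmax responsibility through the Gaussian density $\phi_k$ shows that the $\bbeta_\ell$-gradient carries a factor of the residual $(y_i - \x_i^T\bbeta_\ell)$, so a mean-value bound gives, pointwise, $|w_{i,k}(\hbtheta) - w_{i,k}(\btheta)| \lesssim \sum_\ell \big( |y_i - \x_i^T\bbeta_\ell|\,|\x_i^T(\hbbeta_\ell - \bbeta_\ell)| + |\x_i^T(\hbalpha_\ell - \balpha_\ell)| + |\hsigma_\ell - \sigma_\ell| \big)$. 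By Hölder each inner product is at most $\infnorm{\x_i}\,\onenorm{\hbtheta_\ell - \btheta_\ell}$, and the two relevant sub-Gaussian maximal inequalities, $\max_i \infnorm{\x_i} = O_p(\sqrt{\log(np)})$ and $\max_i |y_i - \x_i^T\bbeta_\ell| = O_p(\sqrt{\log n})$ (using $\twonorm{\bbeta_k - \bbeta_\ell} = O(1)$ from (A3)), combine to give a uniform per-sample bound of order $\est\,\sqrt{\log n}\,\sqrt{\log(np)} \lesssim \est\log(np)$. Substituting into $\e_j^T(\tilde{\bSigma}_k - \hat{\bSigma}_k)\v = \frac1n\sum_i (w_{i,k}(\hbtheta) - w_{i,k}(\btheta)) x_{ij}(\x_i^T\v)$ and controlling the residual empirical average $\frac1n\sum_i |x_{ij}||\x_i^T\v|$ by concentration uniformly over $j$ (contributing a factor proportional to $\twonorm{\xnew}$) yields the $\est\log(np)\,\twonorm{\xnew}$ bound, and the identical argument handles the $\xnew$-direction form.

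The main obstacle is precisely this estimation-error step: the responsibilities are nonlinear, coupled functions of the full high-dimensional vector $\hbtheta$, so I must justify the gradient/Lipschitz bound on $w_{i,k}$ with constants independent of $p$, track the residual factor that supplies the second logarithm, and obtain the control simultaneously over all $p+1$ directions $\omega\in\mathcal{C}$. The conditions $\sqrt{n}\log(np)\est^2 = o(1)$ from (A1) and $\log(p) = o(n^{1/4}/\sqrt{\log n})$ guarantee that the resulting $\est\log(np)$ term is of the advertised order and that all maximal-inequality and concentration events hold with probability tending to one, so that the candidate $\u^\ast$ is feasible asymptotically.
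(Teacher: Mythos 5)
Your proposal is correct and follows essentially the same route as the paper: the same candidate $\u^\ast=\bSigma_k^{-1}\xnew$, the same decomposition through the oracle sample matrix $\bSigma_k^\ast=\frac1n\sum_i\frac{\gamma_{i,k}(\btheta)}{\sigma_k^2}\x_i\x_i^T$, a Bernstein bound for the sampling term, and a Lipschitz/maximal-inequality argument for the responsibility weights to get the $\est\log(np)$ term (the paper's Lemmas~\ref{lemma:lipschitz_gamma}, \ref{lem:(Concentration-of-Design}, and \ref{lem:(Covariance-Matrix-Estimation}). The only cosmetic difference is that you apply the perturbed matrix directly to $\v=\bSigma_k^{-1}\xnew$ and test against $\mathcal{C}$, whereas the paper applies it to the test direction $\w\in\mathcal{C}$ and then uses $\|\u\|_1\le L\twonorm{\xnew}$ via H\"older; both are valid given the hypothesis $\onenorm{\bSigma_k^{-1}\xnew}/\twonorm{\xnew}=O(1)$.
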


	We next decompose the error of the bias-corrected estimator as follows:
	\begin{align*}
	\hat{\Gamma}^d_k - \Gamma_k
	&=   \xnew^T \hbbeta_k  + \u_k^T  \frac{1}{n}\sum_i \frac{\gamma_{i,k}(\hbtheta)}{\hsigma_k^2} (y_i - \x_i^T \hbbeta_k) \x_i - \xnew^T \bbeta_k\\
	&=  \left(\xnew -  \frac{1}{n}\sum_i \frac{\gamma_{i,k}(\hbtheta)}{\hsigma_k^2}  \x_i \x^T_i \u_k   \right)^T (\hbbeta_k - \bbeta_k) + \u_k^T  \frac{1}{n}\sum_i \frac{\gamma_{i,k}(\hbtheta)}{\hsigma_k^2} ( y_i - \x_i^T \bbeta_k ) \x_i\\
	&=  \left(\xnew - \tilde{\bSigma}_k\u_k \right)^T (\hbbeta_k - \bbeta_k) + \u_k^T  \frac{1}{n}\sum_i \frac{\gamma_{i,k}(\hbtheta)}{\hsigma_k^2} (y_i - \x_i^T \bbeta_k) \x_i.
	\end{align*}
	
	Recall the estimated Fisher information matrix constrained to
        $\bbeta_k$ is given by
\[\SMI=\frac{1}{n}\sum_{i=1}^{n}\frac{\gamma^2_{ik}(\hbtheta)}{\hsigma^2_{k}}\frac{\left(y_{i}-\x_{i}^{T}\hbbeta_{k}\right)^2}{\hsigma_{k}^{2}}\x_{i}\x_{i}^{T},\]
and define
\[\PMI =\mathbb{E}\left[\frac{\gamma^2_{1k}(\btheta)}{\sigma_{k}^{2}}\frac{\left(y_{1}-\x_{1}^{T}\bbeta_{k}\right)^{2}}{\sigma_{k}^{2}}\x_{1}\x_{1}^{T}\right].\]
We then have
\begin{subequations}\label{eq:error_decomp}
	\begin{align*}
	\frac{\sqrt{n}(\hat{\Gamma}_k^d - \Gamma_k)}{\sqrt{\u_k^T \SMI \u_k}}
	& = \frac{\sqrt{n}\left(\xnew - \tilde{\bSigma}_k\u_k \right)^T (\hbbeta_k - \bbeta_k)}{\sqrt{\u^T_k \SMI \u_k} } + \frac{\u_k^T  \frac{1}{\sqrt{n}}\sum_i \frac{\gamma_{i,k}(\hbtheta)}{\hsigma_k^2} (y_i - \x_i^T \bbeta_k) \x_i}{\sqrt{\u^T_k \SMI \u_k} }\\
	& = \underbrace{ \frac{\sqrt{n}\left(\xnew - \tilde{\bSigma}_k\u_k \right)^T (\hbbeta_k - \bbeta_k)}{\sqrt{\u^T_k \SMI \u_k} } }_{(\text{term I})} + \underbrace{ \frac{ \u_k^T \frac{1}{\sqrt{n}} \sum_{i=1}^n \frac{\gamma_{i,k}(\btheta)}{\sigma^2_k} (y_i - \x^T_i \bbeta_k)\x_i + (\text{term II})}{\sqrt{ \u_k^T \SMI \u_k}  }}_{(\text{term III})},\tag{\ref{eq:error_decomp}} 
	\end{align*}
\end{subequations}
	where 
	\[
	\text{term II} = \u_k^T  \frac{1}{n}\sum_i \frac{\gamma_{i,k}(\hbtheta)}{\hsigma_k^2} (y_i - \x_i^T \bbeta_k) \x_i- \u_k^T  \frac{1}{n}\sum_i \frac{\gamma_{i,k}(\btheta)}{\sigma_k^2} (y_i - \x_i^T \bbeta_k) \x_i\,.
\]
	
	We will proceed by stating three propositions which will be
        used to control terms I,II, and III. We defer the proof of these propositions to Section \ref{sec:propos}. 
	
	The first proposition allows us to control the numerator of term I. 

	\begin{propo} \label{lemma:error_term}
	Let $\u_k$ be the solution to optimization \eqref{eq:debiase_opt} with $\lambda_{k}\asymp \est \log(np) + \sqrt{\log(p)/n}$ and $L\ge \onenorm{\bSigma^{-1}_k \xnew}$. Under Assumption (A1) we have
		\begin{align}
		\left|\left(\xnew - \tilde{\bSigma}_k\u_k \right)^T (\hbbeta_k - \bbeta_k)\right| = O_p\left( \left(\est \log(np) + \sqrt{\log(p)/n}\right) \est \twonorm{\xnew}\right).
		\end{align}
	\end{propo}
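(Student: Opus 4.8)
The plan is to reduce the bilinear form to a product of dual norms by Hölder's inequality and then bound the two factors separately. Writing the quantity of interest as $\left(\xnew - \tilde{\bSigma}_k\u_k\right)^T(\hbbeta_k - \bbeta_k)$, the first step is
\[
\left|\left(\xnew - \tilde{\bSigma}_k\u_k\right)^T(\hbbeta_k - \bbeta_k)\right| \le \infnorm{\tilde{\bSigma}_k\u_k - \xnew}\,\onenorm{\hbbeta_k - \bbeta_k}.
\]
The second factor is immediately $O_p(\est)$ by Assumption (A1). Hence everything reduces to showing that the $\ell_\infty$ deviation $\infnorm{\tilde{\bSigma}_k\u_k - \xnew}$ is of order $\lambda_k\twonorm{\xnew}$ with $\lambda_k \asymp \est\log(np) + \sqrt{\log(p)/n}$.

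For the first factor I would use that $\u_k$ is the minimizer of \eqref{eq:debiase_opt} and is therefore a feasible point, feasibility being guaranteed by Proposition \ref{lemma:feasibility}. In particular $\u_k$ satisfies the decomposed constraint \eqref{eq:debiase_opt1}, namely $\infnorm{\tilde{\bSigma}_k\u_k - \xnew} \le \lambda_k\twonorm{\xnew}$. If the matrix $\tilde{\bSigma}_k$ entering the error decomposition \eqref{eq:error_decomp} were literally the same matrix that appears in the constraint, the claim would follow at once by substituting the two bounds.

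The main obstacle is the sample-splitting mismatch flagged in Remark \ref{rem:split}: the constraint that $\u_k$ satisfies is formed on $\mathcal{D}_1$, whereas the $\tilde{\bSigma}_k$ multiplying $(\hbbeta_k - \bbeta_k)$ in \eqref{eq:error_decomp} is the empirical average over $\mathcal{D}_2$. To bridge these I would add and subtract, writing (with superscripts denoting the split)
\[
\infnorm{\tilde{\bSigma}_k^{(2)}\u_k - \xnew} \le \infnorm{\tilde{\bSigma}_k^{(1)}\u_k - \xnew} + \infMnorm{\tilde{\bSigma}_k^{(2)} - \tilde{\bSigma}_k^{(1)}}\,\onenorm{\u_k},
\]
the final step being Hölder at the level of the matrix max-norm. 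The first summand is at most $\lambda_k\twonorm{\xnew}$ by feasibility, and $\onenorm{\u_k}\le L\twonorm{\xnew}$ by the second constraint of \eqref{eq:debiase_opt}. What remains is to control the entrywise deviation $\infMnorm{\tilde{\bSigma}_k^{(2)} - \tilde{\bSigma}_k^{(1)}}$, which I would route through the population matrix $\bSigma_k$ and bound by establishing $\infMnorm{\tilde{\bSigma}_k - \bSigma_k} = O_p\big(\est\log(np) + \sqrt{\log(p)/n}\big)$ on each split.

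This concentration estimate is the technical heart of the argument, and I expect the precise emergence of the $\log(np)$ factor to live there: the sampling fluctuation of $\tfrac1n\sum_i \tfrac{\gamma_{ik}(\btheta)}{\sigma_k^2}\x_i\x_i^T$ about $\bSigma_k$ contributes the $\sqrt{\log(p)/n}$ term via a Bernstein-type bound, using the sub-Gaussian design of (A2) together with the boundedness of $\gamma_{ik}$ and of $\sigma_k$ from (A3), while replacing $(\gamma_{ik}(\btheta),\sigma_k)$ by the plug-in estimates $(\gamma_{ik}(\hbtheta),\hsigma_k)$ introduces an error that is Lipschitz in $\onenorm{\hbtheta - \btheta}$ and hence contributes the $\est\log(np)$ term after a union bound over coordinates. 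Combining, $\infnorm{\tilde{\bSigma}_k\u_k - \xnew} = O_p(\lambda_k\twonorm{\xnew})$, and multiplying by the $O_p(\est)$ bound on $\onenorm{\hbbeta_k - \bbeta_k}$ yields the stated rate. Since this entrywise concentration likely overlaps with what is already needed to prove feasibility (Proposition \ref{lemma:feasibility}), I would reuse that estimate rather than re-derive it.
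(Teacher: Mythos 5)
Your proof is correct and its core is exactly the paper's argument: H\"older's inequality to get $\infnorm{\xnew - \tilde{\bSigma}_k\u_k}\,\onenorm{\hbbeta_k-\bbeta_k}$, then the constraint of \eqref{eq:debiase_opt} (whose feasibility is Proposition~\ref{lemma:feasibility}) for the first factor and Assumption (A1) for the second. The paper's proof is literally the three-line chain you describe as the ``if the matrices were the same'' case; it does not distinguish between the $\tilde{\bSigma}_k$ built on $\mathcal{D}_1$ (which enters the constraint) and the one built on $\mathcal{D}_2$ (which enters the error decomposition \eqref{eq:error_decomp}), so your bridging step is an extra refinement the paper silently omits. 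Your fix is sound and costs nothing in the rate: the added term $\infMnorm{\tilde{\bSigma}_k^{(2)}-\tilde{\bSigma}_k^{(1)}}\onenorm{\u_k}$ is controlled by routing through $\bSigma_k$ via Lemma~\ref{lem:(Covariance-Matrix-Estimation} (applied with $\bxi$ ranging over the standard basis on each split), giving $O_p\bigl(\est\sqrt{\log(np)\log(nK)}+\sqrt{\log(p)/n}\bigr)\cdot L\twonorm{\xnew}$, which is $O_p(\lambda_k\twonorm{\xnew})$ since $K$ is bounded; multiplying by $\onenorm{\hbbeta_k-\bbeta_k}=O_p(\est)$ reproduces the stated bound. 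You are also right that this concentration estimate is the same one already needed for feasibility, so nothing new has to be derived.
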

	
	The second proposition lower bounds the denominator of term I.
	\begin{propo}\label{lemma:fisher_RE}
		
		Under the assumptions of Theorem \ref{thm:normality}, there exist constants $c, C>0$, such that 
		
		\begin{align}
                 \u_k^T \SMI\u_k \geq C \|\xnew\|^2_2\,.
		\end{align}
              \end{propo}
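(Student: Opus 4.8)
The plan is to read $\u_k^T\SMI\u_k$ as the \emph{optimal objective value} of \eqref{eq:debiase_opt} and to bound it from below uniformly over the feasible set, rather than exhibiting a single good feasible point (which would only yield an upper bound). First I would reduce the sample quantity to its population analogue. Writing $\bSigma_k$ for the population matrix defined before Theorem~\ref{thm:normality} and $\PMI$ for the population constrained Fisher information, the constraint $\onenorm{\u_k}\le L\twonorm{\xnew}$ together with a Hölder bound gives
\[
\left|\u_k^T(\SMI-\PMI)\u_k\right|\le \infMnorm{\SMI-\PMI}\,\onenorm{\u_k}^2 \le L^2\twonorm{\xnew}^2\,\infMnorm{\SMI-\PMI}.
\]
Since $L=O(1)$ and $\infMnorm{\SMI-\PMI}=o_p(1)$ (by entrywise concentration of the empirical Fisher information around $\PMI$, accounting both for the sampling error at rate $\sqrt{\log p/n}$ and for the plug-in error $\hbtheta\to\btheta$ controlled via (A1)), it suffices to show $\u_k^T\PMI\u_k\ge c\twonorm{\xnew}^2$.

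For the population lower bound I would run the following chain. The variance-enhancing constraint \eqref{eq:debiase_opt2} forces $\langle\xnew,\tilde{\bSigma}_k\u_k\rangle\ge(1-\lambda_k)\twonorm{\xnew}^2$; transferring this to $\bSigma_k$ through the decomposition $\u_k^T\bSigma_k\xnew=\twonorm{\xnew}^2+\langle\xnew,\tilde{\bSigma}_k\u_k-\xnew\rangle+\langle\xnew,(\bSigma_k-\tilde{\bSigma}_k)\u_k\rangle$ yields $\u_k^T\bSigma_k\xnew\ge(1-o_p(1))\twonorm{\xnew}^2$. A self-normalized Cauchy--Schwarz, $(\u_k^T\bSigma_k\xnew)^2\le(\u_k^T\bSigma_k\u_k)(\xnew^T\bSigma_k\xnew)$, combined with $\xnew^T\bSigma_k\xnew\le(C_{\Sigma}/\sigma_k^2)\twonorm{\xnew}^2$ (from $\gamma_{i,k}\le1$, the bounded operator norm in (A2), and the bounded-below noise variance in (A3)), then gives $\u_k^T\bSigma_k\u_k\ge c'\twonorm{\xnew}^2$. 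Finally I would invoke a spectral comparison $\PMI\succeq c\,\bSigma_k$ to conclude $\u_k^T\PMI\u_k\ge c\,\u_k^T\bSigma_k\u_k\ge cc'\twonorm{\xnew}^2$.

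One technical point needs care: the transfer term $\langle\xnew,(\bSigma_k-\tilde{\bSigma}_k)\u_k\rangle$ cannot be bounded by the crude $\infMnorm{\bSigma_k-\tilde{\bSigma}_k}\onenorm{\xnew}$, which is far too lossy when $p\gg n$. Instead I would control the \emph{directional} deviation $\infnorm{(\tilde{\bSigma}_k-\bSigma_k)\xnew}$, whose $j$-th coordinate is a scalar average of sub-exponential variables $\frac{\gamma_{i,k}}{\hsigma_k^2}x_{ij}(\x_i^T\xnew)$, so that a Bernstein bound together with a union bound over $j\in[p]$ gives $\infnorm{(\tilde{\bSigma}_k-\bSigma_k)\xnew}=O_p\!\big(\twonorm{\xnew}(\sqrt{\log p/n}+\est\log(np))\big)$; paired with $\onenorm{\u_k}\le L\twonorm{\xnew}$ this makes the term $o_p(\twonorm{\xnew}^2)$.

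The main obstacle is the spectral comparison $\PMI\succeq c\,\bSigma_k$, the genuinely model-specific step. Note that a direct sample-level Cauchy--Schwarz relating $\tilde{\bSigma}_k$ and $\SMI$ fails, because the residual appears in the denominator and $\E[1/\varepsilon^2]=\infty$; the population comparison is what integrates the residual out. Using the responsibility identities $\E[\gamma_{1,k}(\btheta)\mid\x_1]=\pi_k(\x_1)$ and $\E[\gamma_{1,k}(\btheta)(y_1-\x_1^T\bbeta_k)^2\mid\x_1]=\pi_k(\x_1)\sigma_k^2$, the comparison reduces to the pointwise inequality $\E[\gamma_{1,k}^2(y_1-\x_1^T\bbeta_k)^2\mid\x_1]\ge c\,\sigma_k^2\,\E[\gamma_{1,k}\mid\x_1]$, which I would establish by analyzing the posterior responsibilities under the mixture and showing the constant $c$ stays bounded away from zero uniformly; here the bounded pairwise separations $\twonorm{\bbeta_k-\bbeta_\ell}$ and bounded noise variances of (A3) are precisely what keep the responsibilities non-degenerate. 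I expect quantifying this constant --- and verifying that it does not collapse when the groups are poorly separated or a mixture weight is small --- to be the most delicate part of the argument.
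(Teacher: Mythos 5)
Your proposal has the same architecture as the paper's proof, and two of your three steps coincide with it almost verbatim. The reduction of $\u_k^T\SMI\u_k$ to a population quantity via $\infMnorm{\cdot}\,\onenorm{\u_k}^2$ and the budget $\onenorm{\u_k}\le L\twonorm{\xnew}$ is exactly Proposition~\ref{lemma:fisher_concentration}; your Cauchy--Schwarz step is the paper's Lagrangian computation in disguise (the paper adds $t\bigl((1-\lambda_k)\twonorm{\xnew}^2-\xnew^T\tilde{\bSigma}_k\u\bigr)$ to the objective, minimizes over $\u$, optimizes over $t$, and lands on $(1-\lambda_k)^2\twonorm{\xnew}^4/(\xnew^T\tilde{\bSigma}_k\xnew)$ --- the same bound your inequality $(\u^T\bSigma_k\xnew)^2\le(\u^T\bSigma_k\u)(\xnew^T\bSigma_k\xnew)$ yields, just applied to $\tilde{\bSigma}_k$ rather than $\bSigma_k$); and your insistence on controlling the directional deviation $\infnorm{(\tilde{\bSigma}_k-\bSigma_k)\xnew}$ rather than an entrywise bound times $\onenorm{\xnew}$ is precisely Lemma~\ref{lem:(Covariance-Matrix-Estimation} applied with $\bxi=\xnew/\twonorm{\xnew}\in\mathcal{C}$.

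The genuine gap is the step you yourself flag as delicate: the comparison $\PMI\succeq c\,\bSigma_k$. The paper does not attempt a conditional-moment computation here; it obtains the comparison with $c=1$ from the EM minorization structure, writing $\PMI=-\E[\nabla^2_{1,1}\tQ+\nabla^2_{1,2}\tQ]$ and $\bSigma_k=-\E[\nabla^2_{1,1}\tQ]$ via Lemma~\ref{lem:Q}, and then invoking $-\nabla^2_{1,2}\tQ\succeq0$ (cited from Zhang et al.\ 2020) to drop the difference term, cf.~\eqref{eq:Hess12-psd}--\eqref{eq:LB1}. Your proposed route instead requires $\E[\gamma_{1,k}^2(y_1-\x_1^T\bbeta_k)^2\mid\x_1]\ge c\,\sigma_k^2\,\pi_k(\x_1)$ uniformly in $\x_1$, and your worry that this can collapse is well founded: the Bartlett identity gives the exact decomposition
\begin{equation*}
\PMI=\bSigma_k-\E\left[\gamma_{1,k}(1-\gamma_{1,k})\frac{(y_1-\x_1^T\bbeta_k)^2}{\sigma_k^4}\,\x_1\x_1^T\right],
\end{equation*}
so the constant $c$ you need is governed by how close the responsibilities are to $\{0,1\}$. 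In the fully overlapping case ($\bbeta_k=\bbeta_\ell$, $\sigma_k=\sigma_\ell$) one has $\gamma_{1,k}=\pi_k(\x_1)$ exactly and the ratio degenerates to $\pi_k(\x_1)$, which Assumptions (A1)--(A3) do not bound away from zero. As written, your step~3 is therefore not provable from the stated hypotheses: you must either import the paper's $-\nabla^2_{1,2}\tQ\succeq0$ argument or add a non-degeneracy condition on the responsibilities. I would also urge you to reconcile your exact decomposition above (which makes $\PMI-\bSigma_k$ negative semidefinite) with the opposite sign asserted in~\eqref{eq:LB1}; that reconciliation is exactly the model-specific content of this proposition, and it is the one place where neither your sketch nor a casual reading of the paper's citation chain should be taken on faith.
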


Combining Propositions~\ref{lemma:error_term} and \ref{lemma:fisher_RE}
we obtain

\[
	\text{term I}  = O_p\left( \left(\est \sqrt{n}\log(np) + \sqrt{\log(p)}\right) \est \right) = o_p(1),
	\]
by Condition (A1), equation~\eqref{eq:main-condition}.

The next proposition controls term II.

\begin{propo}\label{lemma:Sigma_conv}
		Under the assumptions of Theorem \ref{thm:normality}, 
		\[
		\left\Vert \frac{1}{n} \sum_{i=1}^n \left( \frac{\gamma_{i,k}(\hbtheta)}{\hsigma_k^2} - \frac{\gamma_{i,k}(\btheta)}{\sigma_k^2}\right) (y_i - \x_i^T \bbeta_k)\x_i \right\Vert_{\infty} = o_p(1).
              \]
	\end{propo}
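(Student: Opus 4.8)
The plan is to control the vector coordinate by coordinate and then pay only a polylogarithmic price for a union bound over the $p$ coordinates. Writing $w_{i,k}(\btheta):=\gamma_{i,k}(\btheta)/\sigma_k^2$, the $j$-th entry of the vector in question is $\frac1n\sum_i (w_{i,k}(\hbtheta)-w_{i,k}(\btheta))(y_i-\x_i^T\bbeta_k)x_{ij}$. First I would separate the two sources of perturbation through the decomposition
\[
\frac{\gamma_{i,k}(\hbtheta)}{\hsigma_k^2}-\frac{\gamma_{i,k}(\btheta)}{\sigma_k^2}
= \gamma_{i,k}(\hbtheta)\Big(\frac{1}{\hsigma_k^2}-\frac{1}{\sigma_k^2}\Big)
+\frac{1}{\sigma_k^2}\big(\gamma_{i,k}(\hbtheta)-\gamma_{i,k}(\btheta)\big),
\]
which isolates the dependence on the noise-variance estimate from the dependence of the responsibilities on the remaining parameters. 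On the event that each $\hsigma_\ell$ is bounded away from $0$ (which holds with probability tending to one by Assumptions (A1) and (A3)), the first factor in the first term is $O_p(\est)$ and $\gamma_{i,k}(\hbtheta)\le 1$.

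For the responsibility difference I would use a first-order mean-value expansion of $\btheta\mapsto\gamma_{i,k}(\btheta)$ along the segment joining $\btheta$ and $\hbtheta$, giving $|\gamma_{i,k}(\hbtheta)-\gamma_{i,k}(\btheta)|\le \sup_{\bar\btheta}\infnorm{\nabla_\btheta\gamma_{i,k}(\bar\btheta)}\cdot\onenorm{\hbtheta-\btheta}$. The key structural observation is that, because each $\gamma_{i,k}\in[0,1]$ and the softmax/Gaussian factors are smooth, every block of $\nabla_\btheta\gamma_{i,k}$ is bounded on the segment by a fixed polynomial $P(\infnorm{\x_i},|y_i|)$ of bounded degree: derivatives in $\balpha_\ell$ contribute $\gamma$-weighted multiples of $\x_i$ (of the form $\gamma_{i,k}(\delta_{k\ell}-\gamma_{i,\ell})\x_i$), while derivatives in $\bbeta_\ell$ and $\sigma_\ell$ bring down factors of $(y_i-\x_i^T\bbeta_\ell)/\sigma_\ell^2$ and $(y_i-\x_i^T\bbeta_\ell)^2/\sigma_\ell^3$. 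Since $\est=o(1)$, the segment lies in a fixed neighborhood of $\btheta$ with high probability, so on that neighborhood $P$ can be taken uniform in $\bar\btheta$.

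Putting these together, the $j$-th entry is bounded in absolute value by $\onenorm{\hbtheta-\btheta}\cdot\frac1n\sum_i Q(\x_i,y_i)|x_{ij}|$ for a polynomial $Q$. I would then invoke Assumption (A2): $\x_i$ (and, through $y_i\mid\x_i,z_i\sim N(\x_i^T\bbeta_{z_i},\sigma_{z_i}^2)$ together with (A3), also $y_i$) is sub-Gaussian, so each summand is a product of sub-Gaussian variables and hence sub-Weibull with bounded mean. A concentration bound for averages of such variables, combined with a union bound over $j\in[p]$, yields $\max_{j\in[p]}\frac1n\sum_i Q(\x_i,y_i)|x_{ij}|=O_p(\mathrm{polylog}(np))$. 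Multiplying by $\onenorm{\hbtheta-\btheta}=O_p(\est)$ from Assumption (A1) gives a bound of order $\est\cdot\mathrm{polylog}(np)$, which is $o_p(1)$ because $\sqrt n\log(np)\est^2=o(1)$ forces $\est=o(n^{-1/4}(\log np)^{-1/2})$ and hence $\est\cdot\mathrm{polylog}(np)\to 0$.

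The main obstacle is the uniform control of $\nabla_\btheta\gamma_{i,k}$ over the \emph{random} segment: because the responsibilities depend nonlinearly, through the softmax and the Gaussian densities, on all of $\{\balpha_\ell,\bbeta_\ell,\sigma_\ell\}_{\ell\in[K]}$ simultaneously, one must verify that the polynomial envelope $P$ does not degrade as $\bar\btheta$ ranges over the segment — in particular that the $\sigma_\ell$ stay bounded below — and that the resulting sub-Weibull averages concentrate uniformly over the $p$ coordinates with only polylogarithmic cost. Once this envelope is in place, the remaining steps are routine moment and union-bound arguments.
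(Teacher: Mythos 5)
Your proposal is correct and follows essentially the same route as the paper: the same two-term decomposition isolating the $\hsigma_k$ perturbation from the responsibility perturbation, a gradient (Lipschitz) bound on $\btheta\mapsto\gamma_{i,k}(\btheta)$ controlled by $\infnorm{\x_i}$ and $|y_i-\x_i^T\bbeta_k|$ (this is the paper's Lemma on Lipschitzness of $\gamma_{i,k}$), and the final rate $\est\cdot\mathrm{polylog}(np)=o_p(1)$ via condition \eqref{eq:main-condition}. The only cosmetic difference is that the paper bounds $\frac1n\sum_i\infnorm{(y_i-\x_i^T\bbeta_k)\x_i}$ by a maximal inequality on the high-probability event $\event$ rather than by concentration of sub-Weibull averages, which yields the same polylogarithmic factor.
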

Therefore, 
\begin{align*}
\text{term II} \leq \left\Vert \frac{1}{n} \sum_{i=1}^n \left( \frac{\gamma_{i,k}(\hbtheta)}{\hsigma_k^2} - \frac{\gamma_{i,k}(\btheta)}{\sigma_k^2}\right) (y_i - \x_i^T \bbeta_k)\x_i \right\Vert_{\infty} \|\u\|_1 = o_p(\|\xnew\|_2)\,.
\end{align*}

The next proposition controls the difference of the sample Fisher
information at the estimated parameter $\SMI$ and the true Fisher information
$\boldsymbol{I}^{\beta}_k (\btheta)$.
 
\begin{propo}\label{lemma:fisher_concentration}
	
	Under the assumptions of Theorem \ref{thm:normality}, and for $\u_k$ the solution of optimization~\eqref{eq:debiase_opt}, we have
	\begin{align}
	\left| \u_k^T \SMI \u_k - \u_k^T \boldsymbol{I}^{\beta}_k (\btheta)\u_k \right| = o_p(\|\xnew\|_2^2).
	\end{align}
\end{propo}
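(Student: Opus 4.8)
The plan is to reduce the quadratic-form statement to an entrywise (max-norm) control of the matrix difference, exploiting the $\ell_1$ budget that the feasibility constraint imposes on $\u_k$. Because optimization~\eqref{eq:debiase_opt} enforces $\onenorm{\u_k}\le L\twonorm{\xnew}$, the H\"older-type bound
\[
\left|\u_k^T\left(\SMI-\PMI\right)\u_k\right|\le \onenorm{\u_k}^2\,\infMnorm{\SMI-\PMI}\le L^2\twonorm{\xnew}^2\,\infMnorm{\SMI-\PMI}
\]
holds deterministically. A key virtue of this step is that it does not require any control of the (data-dependent) correlation between $\u_k$ and the two Fisher matrices, so the sample-splitting device of Remark~\ref{rem:split} is not needed here. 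It therefore suffices to prove $\infMnorm{\SMI-\PMI}=o_p(1)$, after which the conclusion follows with room to spare.

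First I would interpose the sample Fisher information evaluated at the \emph{true} parameter, $\SPMI$, and split $\SMI-\PMI=(\SMI-\SPMI)+(\SPMI-\PMI)$, bounding each max-norm separately. The second (``concentration'') term $\SPMI-\PMI$ is a centered average of i.i.d.\ matrices whose $(j,\ell)$ entry is $\tfrac1n\sum_i \gamma_{ik}^2(\btheta)\,\sigma_k^{-4}(y_i-\x_i^T\bbeta_k)^2 x_{ij}x_{i\ell}$ minus its expectation. Since $\gamma_{ik}\in[0,1]$, each summand is dominated by a product of four sub-Gaussian factors—two coordinates of $\x_i$ under Assumption~(A2) and the squared residual $y_i-\x_i^T\bbeta_k$, which is sub-Gaussian by Assumption~(A3)—hence is sub-Weibull of order $1/2$. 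A Bernstein-type tail inequality for such products, together with a union bound over the $p^2$ entries, yields $\infMnorm{\SPMI-\PMI}=O_p\!\big(\sqrt{\log(p)/n}+(\log p)^2/n\big)$; the growth restriction $\log(p)=o(n^{1/4}/\sqrt{\log n})$ among the hypotheses guarantees this is $o_p(1)$.

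For the first (``plug-in'') term $\SMI-\SPMI$ I would argue by a Lipschitz/first-order perturbation bound in $\btheta$. The map $\btheta\mapsto \gamma_{ik}^2(\btheta)\,\sigma_k^{-4}(y_i-\x_i^T\bbeta_k)^2 x_{ij}x_{i\ell}$ is smooth, and on a high-probability event on which $\max_i\twonorm{\x_i}$ and $\max_i|y_i|$ are polylogarithmically bounded (via maximal inequalities for sub-Gaussian vectors under Assumption~(A2)), its gradient in $\btheta$ is dominated by a low-degree polynomial in $(\x_i,y_i)$. The responsibilities $\gamma_{ik}$ depend on \emph{all} the parameters through the softmax, but their boundedness in $[0,1]$ and the Lipschitzness of the softmap keep that contribution controlled. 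This gives $\infMnorm{\SMI-\SPMI}\lesssim \max_{k}\onenorm{\hbtheta_k-\btheta_k}\cdot\mathrm{polylog}(np)=O_p(\est\,\mathrm{polylog}(np))$ by Assumption~(A1), which is $o_p(1)$ since $\est\log(np)=o(1)$ follows from~\eqref{eq:main-condition}.

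I expect the plug-in term to be the main obstacle. The responsibilities $\gamma_{ik}(\hbtheta)$ couple the estimation error of every group's parameters into the matrix for group $k$, so the perturbation argument must differentiate through the softmax and through the squared residual simultaneously; the delicate bookkeeping lies in showing that the resulting multiplier random variables have enough uniform (over $i$ and over entries $(j,\ell)$) moment control that the product of $\est$ with the accumulated logarithmic factors remains $o_p(1)$. The concentration term is more routine but still demands the correct sub-Weibull tail calculus rather than a naive sub-Gaussian bound, and it is this fourth-order structure that forces the stated restriction on the growth of $\log p$.
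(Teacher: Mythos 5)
Your proposal is correct and follows essentially the same route as the paper: the H\"older bound $|\u_k^T \mathbf{A}\u_k|\le \onenorm{\u_k}^2\infMnorm{\mathbf{A}}$ combined with the $\ell_1$ constraint, followed by the split $\SMI-\PMI=(\SMI-\SPMI)+(\SPMI-\PMI)$ with a Lipschitz perturbation bound for the plug-in term and a Bernstein-type bound with a union bound over entries for the concentration term. The only minor divergence is in the concentration step, where the paper truncates the squared residual on the high-probability event $\event$ (reducing to a sub-exponential Bernstein bound at the cost of an extra $\log(np)$ factor) rather than invoking sub-Weibull tail calculus for the fourth-order product directly; both yield the needed $o_p(1)$ rate under the stated growth condition on $\log p$.
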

	
We next note that 
\begin{align*}
\nabla_{\bbeta_k} \ell(\btheta)
& =  \frac{1}{n} \sum_{i=1}^n \frac{\gamma_{i,k}(\btheta)}{\sigma^2_k} (y_i - \x^T_i \bbeta_k)\x_i\,,
\end{align*}
 and by asymptotic normality of the score functions, see e.g. \cite{van2000asymptotic}, we have
\[
 \frac{1}{\sqrt{n}} \sum_{i=1}^n \frac{\gamma_{i,k}(\btheta)}{\sigma^2_k} (y_i - \x^T_i \bbeta_k)\x_i \dist N(0, \PMI ).
\]
By sample splitting, $\u_k$ is independent of $\{(\x_i, y_i)\}$ and
therefore is also independent of $\nabla_{\bbeta_k}\ell(\btheta)$.
By invoking Proposition~\ref{lemma:fisher_concentration}, and as an application of Slutsky's Theorem, this implies that
\begin{align}\label{eq:dist-noise}
\frac{ \u_k^T \frac{1}{\sqrt{n}} \sum_{i=1}^n \frac{\gamma_{i,k}(\btheta)}{\sigma^2_k} (y_i - \x^T_i \bbeta_k)\x_i}{\sqrt{ \u_k^T \SMI \u_k} } \dist N(0, 1)\,.
\end{align}

Using the above distributional characterization, together with the bounds on terms I, II, III in decomposition~\eqref{eq:error_decomp}, we obtain the desired result.

\section{Proof of
  Theorem~\ref{thm:coverage}}\label{proof:thm:coverage}
Let $\znew$ denote the class of $\xnew$. Conditioning on $\znew$ we have
\begin{align}
\P(\ynew\in\Omega_q(\xnew)|\xnew) &= \sum_{k=1}^K \P(\ynew\in\Omega_q(\xnew)| \znew = k, \xnew) \P(\znew = k |\xnew)\nonumber\\
&=  \sum_{k=1}^K \P(\ynew\in\Omega_q(\xnew)| \znew = k, \xnew) \pi_k(\xnew)\nonumber\\
&=  \sum_{k=1}^K \P(\Gamma_k +  \eps \in\Omega_q(\xnew) |\xnew) \pi_k(\xnew)\,, \label{eq:cov0}
\end{align}
where we recall that $\Gamma_k = \xnew^T\bbeta_k$. Also $\eps\sim
N(0,\sigma_k^2)$ is independent of $\xnew$.

As shown in the proof of Theorem~\ref{thm:normality} we have
\[
\frac{\Gamma_k - \hat{\Gamma}^d_k}{\widehat{V}_k^{1/2}} = W + \Delta\,,
\]
with $W\sim N(0,1)$ and $|\Delta| =o_p(1)$. Therefore,
\begin{align*}
\frac{\Gamma_k +\eps- \hat{\Gamma}^d_k}{b_k} &= \frac{\widehat{V}_k^{1/2}}{b_k} W + \frac{\sigma_k}{b_k}\eps + \frac{\widehat{V}_k^{1/2}}{b_k} \Delta\\
& = c_k W' + \Delta'\,,
\end{align*}
with $W', \eps\sim N(0,1)$ and $|\Delta'| <|\Delta| = o_p(1)$ and $c_k^2: = \frac{\widehat{V}_k+\sigma_k^2}{b_k^2}$. Here we used  the fact that $W,\eps$ are independent normal random variables.

Our procedure for constructing a prediction set returns a union of intervals: $\Omega_q(\xnew) = \cup_{k=1}^K \mathcal{L}_k(q)$ with
$\mathcal{L}_k(q) = [y_k^-,y_k^+]$. Since $\mathcal{L}_k(q) \subseteq \Omega_q(\xnew)$ we have
\begin{align}
\P(\Gamma_k +  \eps \in\Omega_q(\xnew)) &\ge \P(\Gamma_k +  \eps \in\mathcal{L}_k(q))\nonumber\\
&=\P\left(\frac{\Gamma_k +\eps- \hat{\Gamma}^d_k}{b_k} \in \Big[\frac{y_k^- -\hat{\Gamma}^d_k}{b_k},\frac{y_k^+ -\hat{\Gamma}^d_k}{b_k} \Big]\right)\nonumber\\
& = \P\left( c_k W' + \Delta' \in [l_k, u_k] \right)\,,\label{eq:cov1}
\end{align}
where we use the shorthands:
\[
l_k: = \frac{y_k^- -\hat{\Gamma}^d_k}{b_k},\quad u_k:=\frac{y_k^+ -\hat{\Gamma}^d_k}{b_k} \,.
\]
Fix $\gamma>0$ arbitrarily small. We write
\begin{align*}
\P\left( c_k W' + \Delta' \in [l_k, u_k] \right) &\ge \P\left( c_k W'  \in [l_k+\gamma, u_k-\gamma] \right) - \P(|\Delta'|\ge \gamma)\\
& \ge \P\left( W'  \in [(1+\gamma) (l_k+\gamma), (1-\gamma)(u_k-\gamma)] \right) - \P(|\Delta'|\ge \gamma) - \P\Big(\Big|\frac{1}{c_k}-1\Big| \ge \gamma\Big)\,.
\end{align*}
By taking the limit $n\to\infty$ and using the fact that $|\Delta'| = o_p(1)$ and $|\hsigma_k - \sigma_k| = o_p(1)$ (Condition (A1)), we get
\begin{align*}
\lim_{n\to\infty} \P\left( c_k W' + \Delta' \in [l_k, u_k] \right) &\ge \P\left( W'  \in [(1+\gamma) (l_k+\gamma), (1-\gamma)(u_k-\gamma)] \right)  \\
&= \Phi( (1-\gamma)(u_k-\gamma)) - \Phi((1+\gamma) (l_k+\gamma))\,.
\end{align*}
Since $\gamma>0$ was arbitrarily small and the left-hand side does not depend on $\gamma$, by taking $\gamma\to 0$, we arrive at
\begin{align}\label{eq:cov2}
\lim_{n\to\infty} \P\left( c_k W' + \Delta' \in [l_k, u_k] \right) 
 \ge \Phi( u_k) - \Phi(l_k)\,.
\end{align}
Using equations~\eqref{eq:cov1}, \eqref{eq:cov2} in \eqref{eq:cov0} we obtain
\begin{align*}
\lim_{n\to\infty} \P(\ynew\in\Omega_q(\xnew)) &\ge  \sum_{k=1}^K (\Phi( u_k) - \Phi(l_k)) \pi_k(\xnew)\\
& = \sum_{k=1}^K \pi_k(\xnew) \int_{l_k}^{u_k} \phi(t) \de t\\
& = \sum_{k=1}^K  \frac{\hat{\pi}_k(\xnew)}{b_k} \int_{y_k^-}^{y_k^+} \phi\left(\frac{y - \hat{\Gamma}_k^d}{b_k}\right)\de y\\
&= \sum_{k=1}^K   \int_{y_k^-}^{y_k^+} f(y)\de y\\ 
&= \delta \sum_{i=1}^N h_{(i)} -{\rm Err} \\
&\ge (1-q) - {\rm Err}\,,
\end{align*}
where Err is the approximation error for replacing the integral with the Riemann sum. To bound Err we need to upper bound the second derivative of $f(y)$. Define the function $g_a(z) = \frac{1}{a}\phi(\frac{z}{a})$. We have
\[
|g''(z)| = \frac{1}{a^3}\Big|\frac{z^2}{a^2}-1\Big| \phi\Big(\frac{z}{a}\Big) \le \frac{0.18}{a^3}\,.
\] 
Therefore,
\begin{align*}
|f''(y)| = \Big|\sum_{k=1}^K  \frac{\hat{\pi}_k(\xnew)}{b_k} \phi\left(\frac{y - \hat{\Gamma}_k^d}{b_k}\right)\Big|
\le \sum_{k=1}^K \frac{0.18\hat{\pi}_k(\xnew)}{b_k} \le \frac{0.18}{\min_k (b_k)}\,,
\end{align*}
where we used the observation that $\sum_{k=1}^K\hat{\pi}_k(\xnew) = 1$. Therefore, the approximation error over an interval of size $N\delta$ is bounded as
\begin{align}
{\rm Err} \le \frac{0.18}{\min_k (b_k)}\frac{(N\delta)^3}{24N^2} = 0.0075 \frac{N\delta^3}{\min_k (b_k)}\,.
\end{align}
We also note that the returned prediction set is a subset of the initial interval $\mathcal{Q}$ of length ${\rm Len}(\mathcal{Q})$ which implies that $N\delta \le {\rm Len}(\mathcal{Q})$. Hence,
\[
{\rm Err} \le 0.0075 {\rm Len}(\mathcal{Q}) \frac{\delta^2}{\min_k (b_k)} \le \gamma\,,
\]
by our choice of $\delta\le 11 \sqrt{\gamma \min_k (b_k)/{\rm Len}(\mathcal{Q})}$.

\section{Proof of Propositions}\label{sec:propos}
\subsection{Proof of Proposition \ref{lemma:feasibility}} \label{section:proof_feasibility}

	We prove the feasibility of the optimization problem by showing there
	exists $\u$ such that
	\[
	\sup_{\w \in\mathcal{C}}\left|\left\langle \w ,\tilde{\bSigma}_{k}\u-\xnew\right\rangle \right|\le\lambda_{k}\twonorm{\xnew}\text{ and }\left\Vert \u\right\Vert _{1}\le L\twonorm{\xnew}
	\]
	is satisfied for $\lambda_{k}\asymp \est \log(np) + \sqrt{\log(p)/n}$ and $\mathcal{C}=\left\{ \e_{1},\dots,\e_{p},\xnew/\twonorm{\xnew}\right\} $. 
	
	Recall the following quantities:
	\begin{align*}
	\tilde{\bSigma}_{k}&=\frac{1}{n}\sum_{i=1}^{n}\frac{\gamma_{ik}(\hbtheta)}{\hsigma_{k}^{2}}\x_{i}\x_{i}^{T},\\
\bSigma_{k}^{*}&=\frac{1}{n}\sum_{i=1}^{n}\frac{\gamma_{ik}\left(\btheta\right)}{\sigma_{k}^{2}}\x_{i}\x_{i}^{T}, \\
       \bSigma_{k}&= \mathbb{E}\left[\frac{\gamma_{1k}\left(\btheta\right)}{\sigma_{k}^{2}}\x_{1}\x_{1}^{T}\right].
	\end{align*}
	
	Take $\u = \bSigma^{-1}_k\xnew$. By the assumption on $L$ stated in the statement of the proposition, we have $\onenorm{\u}\le L\twonorm{\xnew}$. To show that the other constraint is satisfied we leverage Lemma~\ref{lem:(Covariance-Matrix-Estimation} for the set $\mathcal{C}$ which implies that
	\begin{align*}
	&\sup_{\bxi\in\mathcal{C}} \infnorm{(\tilde{\bSigma}_{k}-\bSigma_{k}^{*})\bxi} = O_{p}\left(\est \log(np)\right)\,,\\
	&\sup_{\bxi\in\mathcal{C}} \infnorm{(\bSigma_{k}-\bSigma_{k}^{*})\bxi} = O_{p}\left(\sqrt{\frac{\log p}{n}}\right).
	\end{align*}
	Combining the above two inequalities we get
	\begin{align}\label{eq:combo}
	\sup_{\bxi\in\mathcal{C}} \infnorm{(\tilde{\bSigma}_{k}-\bSigma_{k})\bxi} = O_{p}\left(\est \log(np) + \sqrt{\log(p)/n}\right)\,.
	\end{align}

                  For any $\w\in\mathcal C$,
	 \begin{align*}
	 \<\w, \tilde{\bSigma}_k\u - \xnew\> &=
	 \<\w, \tilde{\bSigma}_k \u- \bSigma_k \u\>\\
	 &\le \infnorm{(\tilde{\bSigma}_k - \bSigma_k) \w} \onenorm{\u}\\
	 &\le O_{p}\left(\est \log(np) + \sqrt{\log(p)/n}\right) L\twonorm{\xnew}\\
	 &\le \lambda_k \twonorm{\xnew}\,,
	 \end{align*}
	 	where in the second inequality we used~\eqref{eq:combo}.
	 	This completes the proof of the feasibility claim.

\subsection{Proof of Proposition \ref{lemma:error_term}}\label{section:proof_error_term}
As proved in Proposition \ref{lemma:feasibility}, for the choice of $\lambda_{k}\asymp \est \log(np) + \sqrt{\log(p)/n}$ and $L\ge \frac{\onenorm{\bSigma^{-1}_k\xnew}}{\twonorm{\xnew}}$,
optimization problem \eqref{eq:debiase_opt} is feasible. 
The claim follows readily from the constraints of this optimization problem. Specifically,
	\begin{align*}
	\left|\left(\xnew - \tilde{\bSigma}_k\u_k \right)^T (\hbbeta_k - \bbeta_k)\right| & \leq \infnorm{\xnew - \tilde{\bSigma}_k\u_k} \onenorm{\hbbeta_k - \bbeta_k}\\
	&\le \lambda_k \twonorm{\xnew}\onenorm{\hbbeta_k - \bbeta_k}\\
	& \le O_p\left(\Big(\est \log(np) + \sqrt{\log(p)/n}\Big) \est \twonorm{\xnew}\right).
	\end{align*}
	where the first inequality follows from Hölder's inequality (duality of $\ell_1-\ell_\infty$ norms).


\subsection{Proof of Proposition \ref{lemma:fisher_RE}}\label{section:proof_fisher_RE}
We will use the proof strategy of \citet[Lemma
3.1]{javanmard2014confidence}, which was also used in \citet[Lemma 1]{cai2021optimal} and modified to account for the additional constraint~\eqref{eq:debiase_opt2}. However, before doing that we need to deal with the challenge that in optimization~\eqref{eq:debiase_opt} the objective function is based on $\SMI$, while the constraints are in terms of $\tilde{\bSigma}_k$. We first relate $\u_k^T \SMI  \u_k $ to $\u_k^T \tilde{\bSigma}_k\u_k$. 

We denote the first term in \eqref{eq:Q} by $\tQ(\btheta|\hbtheta)$, i.e.,
\begin{align}
\tQ(\btheta|\hbtheta)  = -\frac{1}{n}\sum_{i=1}^n \sum_{k=1}^K \gamma_{i,k}(\hbtheta) \left[\log \pi_{k}(\x_i)  + \log\phi_{k}(\x_i, y_i) \right]\,.
\end{align}
We restate a lemma from \citet{wang2014high} that allows us to connect the derivatives of the function $\tQ(\btheta|\hbtheta)$ with the derivatives of the log-likelihood $\ell(\btheta)$.

\begin{lemma}\cite[Lemma 2.1]{wang2014high} \label{lem:Q}
For the true parameter $\btheta$ and any $\tilde{\btheta}$, it holds that
\begin{align}
\nabla_1 \tilde{Q}(\tilde{\btheta} | \tilde{\btheta}) &= \nabla \ell(\tilde{\btheta})\,,\label{eq:Q-grad}\\
\E[\nabla^2_{1,1} \tilde{Q}(\btheta | \btheta) + \nabla^2_{1,2} \tilde{Q}(\btheta | \btheta)] &= - {\bf I}(\btheta) \,,\label{eq:Q-hess}
\end{align}
where ${\bf I}(\btheta) = -\E[\nabla^2\ell(\btheta)]$ is the Fisher
information matrix and $\nabla_1$ and $\nabla_2$ denote
differentiation with respect to $\bbeta_k$ in the first and second arguments of
$\tilde{Q}(\cdot |\cdot)$, respectively.
\end{lemma}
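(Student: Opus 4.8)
The final statement is Lemma \ref{lem:Q}, which is quoted from \citet[Lemma 2.1]{wang2014high}. Since it is cited as an external result, the intended ``proof'' here is really a verification sketch: I would derive both identities directly from the structure of $\tilde{Q}(\btheta|\hbtheta)$ and the fact that $\ell(\btheta)$ arises by marginalizing over the latent memberships $z_i$. Throughout, recall that $\tilde{Q}(\btheta|\hbtheta)=-\frac{1}{n}\sum_i\sum_k\gamma_{i,k}(\hbtheta)[\log\pi_k(\x_i)+\log\phi_k(\x_i,y_i)]$ depends on its two arguments in decoupled ways: the \emph{second} argument enters only through the responsibilities $\gamma_{i,k}(\hbtheta)$, while the \emph{first} argument enters only through $\log\pi_k$ and $\log\phi_k$.

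For the gradient identity \eqref{eq:Q-grad}, the plan is to fix the responsibility weights at $\gamma_{i,k}(\tbtheta)$ and differentiate the remaining expression in the first argument. The key algebraic fact is the standard EM ``self-consistency'' computation: writing $\ell_i(\btheta)=\log[\sum_k \pi_k(\x_i)\phi_k(\x_i,y_i)]$, one has $\nabla \ell_i(\btheta)=\sum_k \gamma_{i,k}(\btheta)\,\nabla[\log\pi_k(\x_i)+\log\phi_k(\x_i,y_i)]$, because the gradient of the log-sum passes through the softmax-like weights that are exactly the responsibilities $\gamma_{i,k}(\btheta)$. When we evaluate $\nabla_1\tilde{Q}$ at a common point $\tbtheta$ in both slots, the frozen weights $\gamma_{i,k}(\tbtheta)$ coincide with the true responsibilities at $\tbtheta$, so $\nabla_1\tilde{Q}(\tbtheta|\tbtheta)=\frac{1}{n}\sum_i\sum_k\gamma_{i,k}(\tbtheta)\nabla[\log\pi_k+\log\phi_k]=\frac{1}{n}\sum_i\nabla\ell_i(\tbtheta)=\nabla\ell(\tbtheta)$. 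I would present this as the one-line chain-rule identity together with the observation that the derivative of the weights does not contribute because $\nabla_1$ only touches the first argument.

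For the Hessian identity \eqref{eq:Q-hess}, the plan is to differentiate $\nabla_1\tilde{Q}(\btheta_1|\btheta_2)$ once more, separately in each argument, and then evaluate at $\btheta_1=\btheta_2=\btheta$ under the expectation. Differentiating the identity $\nabla\ell(\btheta)=\nabla_1\tilde{Q}(\btheta|\btheta)$ by the chain rule gives $\nabla^2\ell(\btheta)=\nabla^2_{1,1}\tilde{Q}(\btheta|\btheta)+\nabla^2_{1,2}\tilde{Q}(\btheta|\btheta)$, where the two terms are exactly the derivatives through the first and second arguments. Taking expectations and using ${\bf I}(\btheta)=-\E[\nabla^2\ell(\btheta)]$ yields $\E[\nabla^2_{1,1}\tilde{Q}+\nabla^2_{1,2}\tilde{Q}]=-{\bf I}(\btheta)$. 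I would emphasize that this is simply the total-derivative decomposition of the composite map $\btheta\mapsto\nabla_1\tilde{Q}(\btheta|\btheta)$ and requires no new assumptions beyond differentiability and the validity of interchanging differentiation and expectation.

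The main obstacle is not conceptual but bookkeeping: one must carefully justify that the contribution of differentiating the responsibility weights $\gamma_{i,k}$ vanishes at the diagonal for the gradient, and correctly track which cross term of the Hessian corresponds to $\nabla^2_{1,2}$. I would handle this by writing $\tilde{Q}$ explicitly as $\frac{1}{n}\sum_i G(\btheta_1;\btheta_2)$ with $G$ bilinear-in-spirit (weights from $\btheta_2$, potentials from $\btheta_1$), applying the chain rule to the single-variable restriction $\btheta_1=\btheta_2$, and invoking dominated convergence to interchange $\E$ and $\nabla^2$ under Assumptions (A2)--(A3), which guarantee the needed integrability of the sub-Gaussian design and bounded signal. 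Since the statement is quoted verbatim from \cite{wang2014high}, I would keep this verification brief and defer the full regularity justification to that reference.
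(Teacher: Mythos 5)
The paper itself supplies no proof of this lemma (it is imported verbatim from the cited reference), so a verification sketch is the right move, and your two ingredients are exactly the standard argument: the log-sum/softmax self-consistency identity $\nabla\ell_i(\btheta)=\sum_k\gamma_{i,k}(\btheta)\nabla\left[\log\pi_k(\x_i)+\log\phi_k(\x_i,y_i)\right]$ for the gradient, and total differentiation of the diagonal identity $\nabla_1\tQ(\btheta|\btheta)=\nabla\ell(\btheta)$ to split the Hessian into the $\nabla^2_{1,1}$ and $\nabla^2_{1,2}$ blocks. Two of your ``obstacles'' are actually non-issues: for the gradient, there is nothing to justify about the weight derivatives vanishing, since $\nabla_1$ by definition does not act on the second slot, so the responsibilities are frozen constants; and no interchange of $\E$ and $\nabla^2$ is required, because the decomposition $\nabla^2\ell(\btheta)=\nabla^2_{1,1}\tQ(\btheta|\btheta)+\nabla^2_{1,2}\tQ(\btheta|\btheta)$ holds pointwise and one merely takes expectations of an exact identity.

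The one genuine defect is a sign slip that you should resolve explicitly. You correctly recall the paper's definition $\tQ(\btheta|\hbtheta)=-\frac{1}{n}\sum_{i}\sum_k\gamma_{i,k}(\hbtheta)\left[\log\pi_k(\x_i)+\log\phi_k(\x_i,y_i)\right]$, with a leading minus sign, yet your next display computes $\nabla_1\tQ(\tilde{\btheta}|\tilde{\btheta})=+\frac{1}{n}\sum_i\sum_k\gamma_{i,k}(\tilde{\btheta})\nabla\left[\log\pi_k+\log\phi_k\right]$ as if that sign were absent. Under the minus-sign definition your own argument yields $\nabla_1\tQ(\tilde{\btheta}|\tilde{\btheta})=-\nabla\ell(\tilde{\btheta})$ and $\E[\nabla^2_{1,1}\tQ(\btheta|\btheta)+\nabla^2_{1,2}\tQ(\btheta|\btheta)]=+{\bf I}(\btheta)$, i.e., both identities of the lemma with opposite signs. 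The mismatch is inherited from the paper itself: the quoted lemma, and its downstream uses (e.g., the step $-\E[\nabla^2_{1,1}\tQ(\btheta|\btheta)]=\E\left[\gamma_{1k}(\btheta)\sigma_k^{-2}\x_1\x_1^T\right]$ in the proof of the Fisher-information lower bound), are written for the \emph{positive} EM surrogate $\frac{1}{n}\sum_i\sum_k\gamma_{i,k}(\hbtheta)\left[\log\pi_k+\log\phi_k\right]$, to be maximized, whereas the displayed $\tQ$ is its negative. So your verification is correct provided you adopt the positive convention and drop the recalled minus-sign definition; alternatively, keep the paper's $\tQ$ and flip both signs in the lemma statement. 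Either way, say explicitly which convention you use, because the sign propagates into the propositions that invoke this lemma.
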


 Following the same argument as in the proof of \cite[Lemma 3]{zhang2020}, we have 
 \begin{align}\label{eq:Hess12-psd}
 -\nabla_{1,2}^2 \tilde{Q}(\btheta|\btheta)= -\frac{\partial^2}{\partial \bbeta_k \partial \bbeta_k'} \tilde{Q}(\btheta|\btheta')|_{\btheta' = \btheta} \succeq 0\,.
 \end{align}
Combining~\eqref{eq:Q-hess} and \eqref{eq:Hess12-psd} we get
  \begin{align}
\PMI &=  -\E[\nabla^2_{1,1} \tilde{Q}(\btheta | \btheta) + \nabla^2_{1,2} \tilde{Q}(\btheta | \btheta)]\nonumber\\
 &\succeq -\E[\nabla^2_{1,1} \tilde{Q}(\btheta | \btheta)]\nonumber\\
 & = \E\left[\frac{\gamma_{1k}(\btheta)}{\sigma_k^2} \x_1\x_1^T\right] =: \bSigma_k\,,\label{eq:LB1}
  \end{align}
where in the last step we used the notations defined in Lemma~\ref{lem:(Covariance-Matrix-Estimation}.
 
 We next write the following chain of terms:
 \begin{align}
 \u_k^T \SMI \u_k  =  &\u_k^T \tilde{\bSigma}_k \u_k + \u_k^T (\PMI - \bSigma_k)\u_k\nonumber\\
 &+ \u_k^T(\bSigma_k- \bSigma_k^*)\u_k + \u_k^T(\bSigma^*_k- \tilde{\bSigma}_k)\u_k\nonumber\\
 &+\u_k^T (\SMI -  \PMI)\u_k\nonumber\\
 &\ge \u_k^T \tilde{\bSigma}_k \u_k + \u_k^T (\PMI - \bSigma_k)\u_k - O_p\left(\|\xnew\|_2^2\left\{\log(pn)\sqrt{\frac{\log
           p}{n}}+\est\sqrt{\log^3(pn)\log(nK)}\right\}\right)\nonumber\\
 &\ge \u_k^T \tilde{\bSigma}_k \u_k-o_p(\|\xnew\|_2^2)\,,\label{eq:I-Sig}
 \end{align}
 where in the first inequality we used
 Lemma~\ref{lem:(Covariance-Matrix-Estimation} and Proposition \ref{lemma:fisher_concentration}, together with the fact that
 $\onenorm{\u}\le L\|\xnew\|_2$.  The last step in~\eqref{eq:I-Sig} follows from the condition $\sqrt{n} \log(np)  \est^2 = o(1)$ according to Assumption (A1), and the assumption $\log(p)= o(n^{1/4}/\sqrt{\log(n)})$.
 
We next lower bound $\u_k^{T}\tilde{\bSigma}_{k}\u_k$. For any feasible solution $\u$ of optimization \eqref{eq:debiase_opt} we have:
	\begin{align}\label{eq:facilitating}
	\u^T \tilde{\bSigma}_k \u \geq \u^T \tilde{\bSigma}_k \u  + t((1-\lambda_k) \|\xnew\|^2_2 - \xnew^T \tilde{\bSigma}_k \u),
	\end{align}
	for any $t>0$. The last inequality holds true because by the constraint of optimization \eqref{eq:debiase_opt}, we have
	\[
	\|\xnew\|_2^2 - \xnew^T \tilde{\bSigma}_k \u \leq  \left|\xnew^T \tilde{\bSigma}_k \u - \|\xnew\|^2_2 \right| \leq \|\xnew\|_2^2 \lambda_k.
	\]
	
Minimizing over all feasible $\u$ gives
	\begin{align*}
	\u_k^T\tilde{\bSigma}_k\u_k \ge \min_{\u}\left\{\u^T \tilde{\bSigma}_k \u  + t((1-\lambda_k) \|\xnew\|^2_2 - \xnew^T \tilde{\bSigma}_k \u)\right\}\,.
	\end{align*}
	The minimizer $\u^*$ satisfies $\tilde{\bSigma}_k\u^* =
        \frac{t}{2} \tilde{\bSigma}_k\xnew$. Substituting for $\u^*$, we obtain
	\[
	\u_k^T\tilde{\bSigma}_k\u_k \ge -\frac{t^2}{4} \xnew^T \tilde{\bSigma}_k \xnew+ t(1-\lambda_k) \twonorm{\xnew}^2\,.
	\]
	Optimizing this bound over $t$, we get 
	\[
	\u_k^{T}\tilde{\bSigma}_k \u_k \geq \frac{(1-\lambda_k)^2\twonorm{\xnew}^4}{\xnew^T\tilde{\bSigma}_k \xnew},
	\]
	with the optimal choice $t^* = \frac{2(1-\lambda_k) \twonorm{\xnew}^2}{\xnew^T \tilde{\bSigma}_k\xnew} > 0$.
	
	Similar to the proof of Lemma \ref{lem:(Covariance-Matrix-Estimation}, \ref{lem:(Fisher-Information-Estimation}, we have $\left| \frac{ \xnew^T \tilde{\bSigma}_k \xnew}{ \xnew^T \bSigma_k \xnew} - 1\right| = O_p(\sqrt{(\log p)/n})$, and hence $\u_k^{T}\tilde{\bSigma}_k \u_k \ge C\twonorm{\xnew}^2$, which in conjunction with equation~\eqref{eq:I-Sig} gives the desired result.

\subsection{Proof of Proposition \ref{lemma:Sigma_conv}}
\label{section:proof_Sigma_conv}
By the triangle inequality we have
\begin{align}\label{eq:tri}
\left|\frac{\gamma_{i,k}(\hbtheta)}{\hsigma_k^2} - \frac{\gamma_{i,k}(\btheta)}{\sigma_k^2}\right|
\le \left| \frac{\gamma_{i,k}(\hbtheta)}{\hsigma_k^2} - \frac{\gamma_{i,k}(\hbtheta)}{\sigma_k^2}\right|
+\frac{|\gamma_{i,k}(\hbtheta)- \gamma_{i,k}(\btheta)|}{\sigma_k^2} \,.
\end{align}
The first term can be bounded as follows by recalling Condition (A1) on the error term $|\hsigma_k -\sigma_k|$:
\[
\left| \frac{\gamma_{i,k}(\hbtheta)}{\hsigma_k^2} - \frac{\gamma_{i,k}(\hbtheta)}{\sigma_k^2}\right|
= \frac{\gamma_{i,k}(\hbtheta)}{\sigma_k^2} \cdot \left|\frac{\sigma_k^2}{\hsigma_k^2} -1 \right|
\le \frac{1}{\sigma_k^2} \cdot \left|\frac{\sigma_k^2}{\hsigma_k^2} -1 \right| = O(|\sigma_k - \hsigma_k|) = O_p(\est)\,,
\]
uniformly over all $i\in[n]$, $k\in[K]$. For the second term, by using Lemma \ref{lemma:lipschitz_gamma} together with Assumption (A1) on the error term $\onenorm{\hbtheta - \btheta}$ we get
\[
\sup_{i\in[n],k\in[K]} \frac{|\gamma_{i,k}(\hbtheta)- \gamma_{i,k}(\btheta)|}{\sigma_k^2} = O_p(\sqrt{\log(np)\log(nK)}\est)\,.
\] 
Combining the above two bounds into~\eqref{eq:tri} we get that
\begin{align}\label{eq:tri2}
\sup_{i\in[n],k\in[K]} \left|\frac{\gamma_{i,k}(\hbtheta)}{\hsigma_k^2} - \frac{\gamma_{i,k}(\btheta)}{\sigma_k^2}\right| =O_p(\sqrt{\log(np)\log(nK)}\est)\,.
\end{align}
Next, we observe that
\begin{align*}
  \infnorm{ (y_i - \x_i^T \bbeta_k)\x_i }&\le\max_{ik}|y_i
                                           -\x_i^T\bbeta_k|\cdot\max_{i}\|x_{i}\|_\infty=O_p(\sqrt{\log(nK)}\cdot\sqrt{\log(np)})
\end{align*}
by Lemma \ref{lem:events}.  Using this result combined with
\eqref{eq:tri2} gives the bound
\begin{align*}
&\infnorm{ \frac{1}{n} \sum_{i=1}^n \left( \frac{\gamma_{i,k}(\hbtheta)}{\hsigma_k^2} - \frac{\gamma_{i,k}(\btheta)}{\sigma_k^2}\right) (y_i - \x_i^T \bbeta_k)\x_i }\\
& \le \sup_{i\in[n],k\in[K]} \left|  \frac{\gamma_{i,k}(\hbtheta)}{\hsigma_k^2} - \frac{\gamma_{i,k}(\btheta)}{\sigma_k^2} \right|
\cdot\frac{1}{n}\sum_{i=1}^n \infnorm{ (y_i - \x_i^T \bbeta_k)\x_i }\\
&=O_p\Big(\log(np) \log(nK) \est\Big)\\
&=o_p\Big(\sqrt{\log(np)} \cdot {\log(nK)}\cdot n^{-1/4}\Big),
\end{align*}
where the last line follows from our assumption on the estimation rate $\est$, cf. equation~\eqref{eq:main-condition}. Finally note that by invoking the assumption $\log(p)= o(n^{1/4}/\sqrt{\log(n)})$, the last term is $o_p(1)$.

\subsection{Proof of Proposition \ref{lemma:fisher_concentration}}\label{section:proof_fisher_concentration}

	Note that for a matrix $\mathbf{A}$ and a vector $\u$ we have
	\[
	|\u^T \mathbf{A} \u| = |\sum_{ij} A_{ij} u_j u_j|
	\le \infMnorm{\mathbf{A}} \sum_{ij} |u_i| |u_j| = \infMnorm{\mathbf{A}}\onenorm{\u}^2\,. 
	\]

 From Lemma
        \ref{lem:(Fisher-Information-Estimation},
        $|\SMI-\boldsymbol{I}^{\beta}_k
        (\btheta)|_\infty=O_{p}\left(\sqrt{\log^3(np)\log(nK)}\est+\log(np)\sqrt{\frac{\log(p)}{n}}\right)$.  
                
        Given that $\onenorm{\u_k}\le L\|\xnew\|_2$ for a constant
        $L$, and $\sqrt{n} \log(np)  \est^2 = o(1)$ according to Assumption (A1) we have

        $$
        \left| \u_k^T \SMI \u_k - \u_k^T \boldsymbol{I}^{\beta}_k (\btheta)\u_k \right|=\|\xnew\|_2^2\cdot O_{p}\left(\log(np)\sqrt{\log(nK)}n^{-1/4}+\log(np)\sqrt{\frac{\log(p)}{n}}\right)=o_p(\|\xnew\|_2^2),
        $$
        where the last step follows from the assumption $\log(p) = o(n^{1/4}/\sqrt{\log(n)})$.

\section{Intermediate Lemmas and Proofs}
This section summarizes several technical lemmas that were used in establishing our theoretical results.

The first lemma is a classical maximal inequality for sub-Gaussian random  variables. 
\begin{lemma} \label{lemma:max-gaussians-bound}
	(Maximal Inequality for Sub-Gaussians) Let $\x=(x_{1},\dots,x_{n})$ be
	a vector of zero-mean sub-Gaussian random variables with
	variances $v_{1}^{2},\dots,v_{n}^{2}$, respectively. Then, for any constant $c>0$  we have 
	\[
	\sup_{i=1,\dots,n}\left|x_{i}\right|\le v_{\max}\sqrt{2c \log n},
	\]
	with probability
	at least $1-2n^{1-c}$,
	where $v_{\max} = \max_{i\in[n]} v_i$.
\end{lemma}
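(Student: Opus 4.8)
The plan is to reduce the maximum over the $n$ coordinates to a single-coordinate sub-Gaussian tail bound via a union bound. First I would invoke the standard two-sided tail inequality for a zero-mean sub-Gaussian variable $x_i$ with variance proxy $v_i^2$, namely
\[
\P\left(|x_i| \ge t\right) \le 2\exp\left(-\frac{t^2}{2v_i^2}\right), \qquad t\ge 0,
\]
and then observe that, since $v_i \le v_{\max}$ for every $i$, each such tail is dominated by $2\exp(-t^2/(2v_{\max}^2))$.

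Next I would apply a union bound across $i=1,\dots,n$,
\[
\P\left(\sup_{i=1,\dots,n} |x_i| \ge t\right) \le \sum_{i=1}^n \P\left(|x_i|\ge t\right) \le 2n\exp\left(-\frac{t^2}{2v_{\max}^2}\right),
\]
and substitute the threshold $t = v_{\max}\sqrt{2c\log n}$. With this choice the exponent becomes $-c\log n$, so the right-hand side equals $2n\cdot n^{-c} = 2n^{1-c}$. Passing to the complementary event yields $\sup_{i} |x_i| < v_{\max}\sqrt{2c\log n}$ with probability at least $1-2n^{1-c}$, which is exactly the claimed bound (the distinction between the strict and non-strict inequality being immaterial).

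Since the argument is otherwise entirely routine, the only point deserving care --- and the closest thing to an obstacle --- is reconciling the ``variance'' $v_i^2$ appearing in the statement with the $\psi_2$-norm convention fixed in the Notation section, as the clean factor $\sqrt{2c\log n}$ hinges on the constant $2$ in the exponent of the tail bound. I would therefore take $v_i^2$ to be precisely the variance proxy for which the displayed two-sided tail holds with that constant (equivalently, absorbing the universal constant relating $\|x_i\|_{\psi_2}$ to its sub-Gaussian tail into the definition of $v_i$); with this normalization fixed, the union-bound computation above delivers the stated probability verbatim.
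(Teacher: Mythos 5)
Your proof is correct and follows exactly the route the paper indicates: the two-sided sub-Gaussian tail bound applied coordinatewise, a union bound over the $n$ coordinates, and the substitution $t=v_{\max}\sqrt{2c\log n}$, with no independence needed. Your remark about normalizing $v_i^2$ as the variance proxy for which the tail constant $2$ holds is the right way to reconcile the statement with the $\psi_2$ convention.
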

The above result is obtained by using the tail bound of sub-Gaussian
variables, followed by a simple union bound, and therefore it does not
require the random variables to be independent.

Our next lemma is a simple corollary of the above maximal inequality.
\begin{lemma}\label{lem:events}
Define the following probability events:
\begin{align*}
\event_1 &:= \left\{\max_{i\in [n]} \infnorm{\x_i}\le C\sqrt{\log(np)} \right\},\\
\event_2 &:= \left\{\max_{i\in[n], k\in [K]} \;  |y_i - \x_i^T\bbeta_k| \le C \sqrt{\log(nK)} \right\}\,,
\end{align*}
and let $\event: = \event_1\cap \event_2$. Then,  under Assumptions (A2)--(A3) we have $\P(\event) \ge 1 - \frac{4}{n^2}$, for large enough constant $C>0$.
\end{lemma}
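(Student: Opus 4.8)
The plan is to control each of the events $\event_1$ and $\event_2$ separately via the sub-Gaussian maximal inequality of Lemma~\ref{lemma:max-gaussians-bound} and then combine them by a union bound, $\P(\event^c)\le \P(\event_1^c)+\P(\event_2^c)$. The only substantive work is to verify that the relevant collections of random variables are zero-mean sub-Gaussian with variance proxy $O(1)$; once that is in place, choosing the constant $c$ in the maximal inequality large enough drives each failure probability below $2/n^2$.

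For $\event_1$, I would write each coordinate as $x_{ij}=\e_j^T\x_i = \z_i^T \bSigma^{1/2}\e_j$, where $\z_i:=\bSigma^{-1/2}\x_i$ satisfies $\|\z_i\|_{\psi_2}=O(1)$ by Assumption (A2). Since $\|\bSigma^{1/2}\e_j\|_2\le \|\bSigma\|_{\op}^{1/2}\le \sqrt{C_\Sigma}$, each $x_{ij}$ is zero-mean sub-Gaussian with proxy $O(1)$, uniformly in $(i,j)$. The quantity $\max_{i\in[n]}\infnorm{\x_i}$ is the maximum of these $np$ variables, so applying Lemma~\ref{lemma:max-gaussians-bound} with $c=3$ yields $\max_{i,j}|x_{ij}|\le C\sqrt{\log(np)}$ with probability at least $1-2(np)^{-2}\ge 1-2/n^2$, which is exactly $\event_1$.

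For $\event_2$, I would use the model to write $y_i = \x_i^T\bbeta_{z_i}+\eps_i$ with $\eps_i\mid(\x_i,z_i)\sim N(0,\sigma_{z_i}^2)$, so that
\[
y_i-\x_i^T\bbeta_k = \x_i^T(\bbeta_{z_i}-\bbeta_k)+\eps_i .
\]
The dependence between $z_i$ and $\x_i$ is the one genuinely new feature here; I would sidestep it by dominating the first term uniformly, $|\x_i^T(\bbeta_{z_i}-\bbeta_k)|\le \max_{\ell\in[K]}|\x_i^T(\bbeta_\ell-\bbeta_k)|$. Each $\x_i^T(\bbeta_\ell-\bbeta_k)=\z_i^T\bSigma^{1/2}(\bbeta_\ell-\bbeta_k)$ is then a zero-mean sub-Gaussian with proxy $\lesssim \|\bSigma\|_{\op}^{1/2}\twonorm{\bbeta_\ell-\bbeta_k}=O(1)$ by Assumptions (A2)--(A3), and there are only $nK^2$ of them; likewise $\eps_i$ is a mixture of centered Gaussians with variances bounded by $\max_k\sigma_k^2=O(1)$ (A3), hence sub-Gaussian with proxy $O(1)$, and there are $n$ of them. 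Applying Lemma~\ref{lemma:max-gaussians-bound} to each family with $c$ large enough bounds $\max_{i,k,\ell}|\x_i^T(\bbeta_\ell-\bbeta_k)|$ and $\max_i|\eps_i|$ each by $C\sqrt{\log(nK)}$ with failure probability at most $1/n^2$ apiece (absorbing the bounded constant $K$ into $C$ and using $\log(nK^2)\asymp\log(nK)$), so that $\event_2$ holds with probability at least $1-2/n^2$. Combining the two bounds gives $\P(\event)\ge 1-4/n^2$. The computation is otherwise routine; the only place requiring care is the $z_i$--$\x_i$ dependence in $\event_2$, which is handled cleanly because the maximal inequality does not require independence and because passing to $\max_\ell$ removes the random group label, leaving manifestly centered sub-Gaussian linear functionals of $\x_i$.
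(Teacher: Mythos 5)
Your proposal is correct and follows essentially the same route as the paper: the same reduction of $x_{ij}$ to $\langle\bSigma^{1/2}\e_j,\bSigma^{-1/2}\x_i\rangle$ for $\event_1$, the same decomposition $y_i-\x_i^T\bbeta_k=\x_i^T(\bbeta_{z_i}-\bbeta_k)+\eps_i$ with a $\max_\ell$ domination for $\event_2$, and the sub-Gaussian maximal inequality plus a union bound throughout. The only cosmetic difference is that the paper bounds $\|y_i-\x_i^T\bbeta_k\|_{\psi_2}$ directly by the triangle inequality and invokes the maximal inequality once over the $nK$ residuals, whereas you invoke it separately for the linear-functional family and the noise family; both yield the stated $1-4/n^2$ bound.
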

\begin{proof}
we write $x_{ir} = \<\bSigma^{1/2} \e_r, \bSigma^{-1/2}\x_i\>$. Therefore,
\begin{align}\label{eq:xir-sub-G}
\|x_{ir}\|_{\psi_2} \le\twonorm{\bSigma^{1/2} \e_r} \|\bSigma^{-1/2}\x_i \|_{\psi_2} \le C_{\Sigma}^{1/2} \|\bSigma^{-1/2}\x_i \|_{\psi_2} < \kappa\,,
\end{align}
for all $r\in[p]$, $i\in[n]$ and some constant $\kappa<\infty$ by Assumption (A2). By Lemma~\ref{lemma:max-gaussians-bound}, and for sufficiently large constant $C>0$, we get $\P(\event_1)\ge 1 - \frac{2}{n^2}$.

To bound probability of  $\event_2$, suppose that sample $i$ belongs to group $\ell$, by which we can write
\[
y_i -\x_i^T\bbeta_k = \x_i^T(\bbeta_\ell - \bbeta_k) + \eps_i\,.
\]
We then have
\begin{align*}
\|y_i -\x_i^T\bbeta_k\|_{\psi_2} &\le \max_{\ell\in[K]} \|\x_i^T(\bbeta_\ell - \bbeta_k) + \eps_i\|_{\psi_2}\\
&\le \max_{\ell\in[K]} \left( \|\x_i^T(\bbeta_\ell - \bbeta_k)\|_{\psi_2} +  \|\eps_i\|_{\psi_2} \right)\\
&\le\max_{\ell\in[K]} \left( \twonorm{\bSigma^{1/2}(\bbeta_\ell - \bbeta_k)}\|\bSigma^{-1/2}\x_i\|_{\psi_2} +  \sigma_\ell \right)\\
&\le  \max_{\ell\in[K]} \left(C_{\Sigma} \twonorm{\bbeta_\ell - \bbeta_k} \|\bSigma^{-1/2}\x_i\|_{\psi_2} +  \sigma_\ell \right)\,.
\end{align*}
 Taking maximum over $i,k$ from both sides, we get
\begin{align}\label{eq:res-X2}
\sup_{i\in[n], k\in[K]} \|y_i -\x_i^T\bbeta_k\|_{\psi_2} \le 
C_\Sigma\left(\max_{i\in[n]} \|\bSigma^{-1/2}\x_i\|_{\psi_2} \right) \left( \max_{k, \ell\in[K]} \twonorm{\bbeta_\ell - \bbeta_k} \right) + \max_{k\in[K]} \sigma_k\,.
\end{align}
Recalling Assumptions (A2) and (A3), we get $\sup_{i\in[n], k\in[K]} \|y_i -\x_i^T\bbeta_k\|_{\psi_2} \le \kappa'$, for some constant $\kappa'<\infty$. Therefore, by another application of Lemma~\ref{lemma:max-gaussians-bound}, and for sufficiently large constant $C>0$, we get $\P(\event_2)\ge 1 - \frac{2}{n^2}$.

Combining the two probability bounds we get $\P(\event) \ge 1 - \P(\event_1) - \P(\event_2) \ge 1 - \frac{4}{n^2}$.
\end{proof}

While Assumption (A1) concerns the estimation error $\onenorm{\hbtheta
  - \btheta}$, in our analysis we often need to control the
perturbation of different functions of $\btheta$. A useful step for
these bounds is a control on the Lipschitz factor of
$\gamma_{i,k}(\btheta)$, which is the subject of the next lemma.

\begin{lemma}
	\label{lemma:lipschitz_gamma}{(Lipschitzness of
          $\gamma_{i,k}(\btheta)$)} On event $\event$, defined in
        Lemma~\ref{lem:events}, the Lipschitz factor of
        $\gamma_{i,k}(\btheta)$ with respect to the $\ell_1$ norm is $O_p(\log(np))$, uniformly over all $i\in [n]$, $k\in [K]$. As a result,
	\[
	\sup_{i\in[n], k\in[K]} \frac{|\gamma_{i,k}(\hbtheta) - \gamma_{i,k}(\btheta)|}{\onenorm{\hbtheta - \btheta}} = O_p(\sqrt{\log(np) \log(nK)})\,.
	\]	
\end{lemma}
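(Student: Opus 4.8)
The plan is to bound the $\ell_1$-Lipschitz factor of $\gamma_{i,k}(\btheta)$ by the supremum, over a small neighborhood of $\btheta$, of the $\ell_\infty$ norm of its gradient, since the norm dual to $\ell_1$ is $\ell_\infty$. First I would compute the gradient of $\gamma_{i,k}$ with respect to the three parameter blocks $\{\balpha_m\}$, $\{\bbeta_m\}$, $\{\sigma_m\}$, exploiting the softmax-times-Gaussian structure $\gamma_{i,k}=w_k/\sum_\ell w_\ell$ with $w_\ell=\pi_\ell(\x_i)\phi_\ell(\x_i,y_i)$. Differentiating $\log\gamma_{i,k}$ is cleanest and yields
\begin{align*}
\frac{\partial \gamma_{i,k}}{\partial \balpha_m} &= \gamma_{i,k}(\mathbf 1\{k=m\}-\gamma_{i,m})\,\x_i,\\
\frac{\partial \gamma_{i,k}}{\partial \bbeta_m} &= \gamma_{i,k}(\mathbf 1\{k=m\}-\gamma_{i,m})\frac{y_i-\x_i^T\bbeta_m}{\sigma_m^2}\,\x_i,\\
\frac{\partial \gamma_{i,k}}{\partial \sigma_m} &= \gamma_{i,k}(\mathbf 1\{k=m\}-\gamma_{i,m})\Big(\frac{(y_i-\x_i^T\bbeta_m)^2}{\sigma_m^3}-\frac{1}{\sigma_m}\Big),
\end{align*}
where every ``probability'' prefactor $\gamma_{i,k}(\mathbf 1\{k=m\}-\gamma_{i,m})$ lies in $[-1,1]$.

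Second, I would bound the $\ell_\infty$ norm of each block on the event $\event$ of Lemma~\ref{lem:events}. Since the prefactor is at most $1$ in absolute value, the $\balpha$-block contributes $\infnorm{\x_i}=O(\sqrt{\log(np)})$, the $\bbeta$-block contributes $\sigma_m^{-2}|y_i-\x_i^T\bbeta_m|\,\infnorm{\x_i}=O(\sqrt{\log(nK)\log(np)})$, and the $\sigma$-block contributes $\sigma_m^{-1}+\sigma_m^{-3}(y_i-\x_i^T\bbeta_m)^2=O(\log(nK))$, using Assumption (A3) that the $\sigma_m$ are bounded above and away from zero. Taking the maximum over blocks, and noting that $K$ is bounded so $\log(nK)=O(\log(np))$, the gradient $\ell_\infty$ norm is $O(\log(np))$ on $\event$, which gives the stated $O_p(\log(np))$ Lipschitz factor; keeping the two logarithmic factors separate in the dominant $\bbeta$-block yields the sharper rate $\sqrt{\log(np)\log(nK)}$. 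I would then pass to the displayed ratio bound via the mean value theorem along the segment $\tilde\btheta(t)=\btheta+t(\hbtheta-\btheta)$, $t\in[0,1]$, giving $|\gamma_{i,k}(\hbtheta)-\gamma_{i,k}(\btheta)|\le \sup_t \infnorm{\nabla\gamma_{i,k}(\tilde\btheta(t))}\cdot\onenorm{\hbtheta-\btheta}$, followed by a maximum over $i\in[n]$, $k\in[K]$.

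The main obstacle is that $\event$ controls $|y_i-\x_i^T\bbeta_m|$ and $\sigma_m$ only at the \emph{true} parameter, whereas the gradient in the mean value theorem is evaluated at the perturbed $\tilde\btheta(t)$. I would handle this by showing the perturbation is asymptotically negligible: on $\event$, $|\x_i^T(\tilde\bbeta_m-\bbeta_m)|\le \infnorm{\x_i}\onenorm{\hbbeta_m-\bbeta_m}=O_p(\sqrt{\log(np)}\,\est)=o_p(1)$, since Assumption (A1)'s condition $\sqrt{n}\log(np)\est^2=o(1)$ forces $\est\sqrt{\log(np)}=o(n^{-1/4})$; likewise $|\tilde\sigma_m-\sigma_m|\le\onenorm{\hbtheta-\btheta}=O_p(\est)=o_p(1)$, so $\tilde\sigma_m$ stays bounded away from zero for large $n$. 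Consequently $|y_i-\x_i^T\tilde\bbeta_m|\le |y_i-\x_i^T\bbeta_m|+o_p(1)=O_p(\sqrt{\log(nK)})$ uniformly in $i,k$, and the block bounds above transfer verbatim to $\tilde\btheta(t)$.

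Finally, combining the high-probability event $\P(\event)\ge 1-4/n^2$ with these $O_p(\est)$ perturbation bounds delivers both the $O_p(\log(np))$ Lipschitz factor and the claimed $\sup_{i\in[n],k\in[K]}|\gamma_{i,k}(\hbtheta)-\gamma_{i,k}(\btheta)|/\onenorm{\hbtheta-\btheta}=O_p(\sqrt{\log(np)\log(nK)})$. I expect the only delicate bookkeeping to be the uniformity over $i$ and $k$ and the passage from the pointwise event bounds to bounds along the entire segment; both are controlled by the crude magnitude bounds on $\event$ together with the vanishing of $\est\sqrt{\log(np)}$.
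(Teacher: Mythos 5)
Your proposal is correct and follows essentially the same route as the paper: differentiate the softmax form of $\gamma_{i,k}$, bound the $\ell_\infty$ norm of each gradient block on the event $\event$ using the boundedness of the responsibility prefactors, $\infnorm{\x_i}=O(\sqrt{\log(np)})$, $|y_i-\x_i^T\bbeta_m|=O(\sqrt{\log(nK)})$, and Assumption (A3), then conclude via the mean value theorem. You are in fact somewhat more careful than the paper on two points it glosses over --- the cross-block derivatives $\partial\gamma_{i,k}/\partial\btheta_m$ for $m\neq k$ and the evaluation of the gradient at intermediate points of the segment, which you control using $\est\sqrt{\log(np)}=o_p(1)$ --- but these refinements do not change the argument's structure.
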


\begin{proof}
	
	To prove the claim, it suffices to show that
	
	\[
	\sup_{i\in[n],k\in[K]} \left(\left\Vert \frac{\partial \gamma_{i,k}}{\partial \bbeta_k}\right\Vert_{\infty}, \left\Vert \frac{\partial \gamma_{i,k}}{\partial \balpha_k}\right\Vert_{\infty}, \left| \frac{\partial \gamma_{i,k}}{\partial \sigma_k}\right|\right) = O_p(\sqrt{\log(np) \log(nK)}).
	\]
	
	Recall that $\gamma_{i,k}(\btheta) = \frac{\pi_{k}(\x_i) \phi_{k}(\x_i, y_i)}{\sum_{\ell = 1}^K \pi_{\ell}(\x_i) \phi_{\ell}(\x_i, y_i)}$, with $\phi_{k}$ given by~\eqref{eq:phi-def}. A simple algebraic calculation shows that for a function of form $f_k(\z) = \frac{c_k e^{z_k}}{\sum_{\ell} c_\ell e^{z_\ell}}$, we have 
	\begin{align}\label{eq:f-aux}
	\frac{\partial}{\partial z_k} f_k = f_k(z) - f_k^2(z)\,.
	\end{align}
	 Applying this result, we obtain
	\[
	\frac{\partial \gamma_{i,k}}{\partial \bbeta_k}  = (\gamma_{i,k} - \gamma^2_{i,k}) \frac{1}{\sigma_k^2} (y_i - \x^T_i \bbeta_k) \x_i.
	\]
	Therefore, on the event $\event$ we have 
	
	\begin{align*}
	\infnorm{\frac{\partial \gamma_{i,k}}{\partial \bbeta_k} } &\leq \frac{1}{\sigma_k^2} \left| \gamma_{i,k} - \gamma^2_{i,k}\right| |y_i - \x^T_i \bbeta_k| \cdot \infnorm{\x_i}\\
	&\leq \frac{1}{4\sigma_k^2} C^2 \sqrt{\log(np) \log(nK)} = O(\sqrt{\log(np) \log(nK)}),
	\end{align*}
	where we used the definition of event $\event$ and the fact that since $0 \leq \gamma_{i,k} \leq 1$, $\gamma_{i,k}(1 - \gamma_{i,k}) \leq \frac{1}{4}$. 
	
	Similarly, we can bound the partial derivative with respect to $\balpha_k$. By another application of \eqref{eq:f-aux}, we obtain 

	\begin{align*}
	\frac{\partial \gamma_{i,k}}{\partial \balpha_k}  &= (\gamma_{i,k} - \gamma^2_{i,k}) \x_i\,,\\
	\infnorm{\frac{\partial \gamma_{i,k}}{\partial \balpha_k} } &\leq |\gamma_{i,k} - \gamma^2_{i,k}|\; \infnorm{\x_i} \leq \frac{1}{4}\infnorm{\x_i} = O(\sqrt{\log(np)})\,,\\
	\sup_{i\in[n], k\in[K]} \left\Vert	\frac{\partial \gamma_{i,k}}{\partial \balpha_k} \right\Vert_{\infty} &\leq \sup_i \|\x_i\|_{\infty} \leq O(\sqrt{\log(np)})\,.
	\end{align*}
	
	Finally, we bound the partial derivative with respect to $\sigma_k$. By another application of \eqref{eq:f-aux}, we obtain 
	
	\begin{align*}
	\left|\frac{\partial \gamma_{i,k}}{\partial \sigma_k} \right| &= (\gamma_{i,k}
          - \gamma^2_{i,k}) \frac{1}{\sigma_k^3}
          (y_i-\x_i^T\bbeta_k)^2 = O(\log(nK))\,.
	\end{align*}
This completes the proof.
	
\end{proof}

\begin{lemma}
	\label{lemma:mixture_estimation_error}(Mixture Estimation Error)
	On the event $\event$, defined in Lemma~\ref{lem:events}, we have
	\[
	\sup_{i\in[n],k\in[K]}\left|\frac{\gamma_{ik}(\hbtheta)}{\hsigma_{k}^2}-\frac{\gamma_{ik}(\btheta)}{\sigma_{k}^{2}}\right|=O_p\left(\sqrt{\log(np)\log(nK)} \est \right).
	\]
\end{lemma}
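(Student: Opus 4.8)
The plan is to bound the quantity directly via a triangle inequality that separates the error from estimating the noise variance $\sigma_k$ from the error in estimating the responsibilities $\gamma_{ik}(\btheta)$. Specifically, I would write
\[
\left|\frac{\gamma_{ik}(\hbtheta)}{\hsigma_k^2}-\frac{\gamma_{ik}(\btheta)}{\sigma_k^2}\right|
\le \frac{\gamma_{ik}(\hbtheta)}{\sigma_k^2}\left|\frac{\sigma_k^2}{\hsigma_k^2}-1\right|
+\frac{|\gamma_{ik}(\hbtheta)-\gamma_{ik}(\btheta)|}{\sigma_k^2},
\]
and then control the two summands separately, uniformly over $i\in[n]$ and $k\in[K]$.

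For the first summand I would use that $0\le\gamma_{ik}(\hbtheta)\le1$ together with Assumption (A3), under which the $\sigma_k^2$ are bounded away from $0$ and from $\infty$. Since Assumption (A1) gives $|\hsigma_k-\sigma_k|=O_p(\est)$, the ratio $\sigma_k^2/\hsigma_k^2$ is $1+O_p(\est)$, and hence the first summand is $O_p(\est)$ uniformly in $i,k$.

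For the second summand — where the essential work lies — I would invoke Lemma \ref{lemma:lipschitz_gamma}, which establishes on the event $\event$ that $\gamma_{ik}$ is Lipschitz in $\btheta$ with respect to the $\ell_1$ norm with factor $O_p(\sqrt{\log(np)\log(nK)})$, uniformly over $i,k$. Combined with the $\ell_1$-consistency $\onenorm{\hbtheta-\btheta}=O_p(\est)$ from Assumption (A1) and the lower bound on $\sigma_k^2$ from (A3), this yields
\[
\sup_{i\in[n],k\in[K]}\frac{|\gamma_{ik}(\hbtheta)-\gamma_{ik}(\btheta)|}{\sigma_k^2}=O_p\big(\sqrt{\log(np)\log(nK)}\,\est\big).
\]

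Finally I would combine the two bounds. Since $\sqrt{\log(np)\log(nK)}\ge 1$ for $n$ large, the second term dominates, giving the stated rate $O_p(\sqrt{\log(np)\log(nK)}\,\est)$ on $\event$. The only genuine obstacle is the uniform Lipschitz control of the responsibilities, but this is already supplied by Lemma \ref{lemma:lipschitz_gamma}; what remains is bookkeeping to verify that the noise-variance term is of strictly smaller order and therefore does not affect the overall rate.
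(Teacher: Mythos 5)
Your proof is correct and follows essentially the same route as the paper: the same triangle-inequality decomposition (your first summand is algebraically identical to the paper's term $(a)$, since $\frac{\gamma_{ik}(\hbtheta)}{\sigma_k^2}\bigl|\frac{\sigma_k^2}{\hsigma_k^2}-1\bigr|=\gamma_{ik}(\hbtheta)\bigl|\frac{1}{\hsigma_k^2}-\frac{1}{\sigma_k^2}\bigr|$), the same use of Assumptions (A1) and (A3) to make that term $O_p(\est)$, and the same appeal to Lemma~\ref{lemma:lipschitz_gamma} for the dominant responsibility term. No gaps.
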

\begin{proof}
	We have that 
	\[
	\sup_{i\in[n],k\in[K]} \left| \frac{\gamma_{ik}(\hbtheta)}{\hsigma_{k}^{2}}-\frac{\gamma_{ik}(\btheta)}{\sigma_{k}^{2}}\right|\le\underbrace{\sup_{i,k} {\gamma_{ik}(\hbtheta)}\left|\frac{1}{\hsigma_k^2}-\frac{1}{\sigma_{k}^2}\right|}_{(a)}+\underbrace{\sup_{i,k}\left|\frac{\gamma_{ik}(\hbtheta)-\gamma_{ik}(\btheta)}{\sigma_{k}^{2}}\right|}_{(b)}.
	\]
	For $(a)$, we have 
	\[
	(a) \le  \sup_{k} \left|\frac{1}{\hsigma_k^2}-\frac{1}{\sigma_{k}^2}\right| = \sup_k \frac{|\hsigma_k - \sigma_k|\; |\hsigma_k + \sigma_k|}{\sigma_k^2 \hsigma_k^2} =O_{p}\left(\est\right)\,,
	\]
	by Assumption (A1) and (A3). For $(b)$, as shown in Lemma \ref{lemma:lipschitz_gamma} we have
	\[
	\sup_{i,k}\left|\frac{\gamma_{ik}(\hbtheta)-\gamma_{ik}(\btheta)}{\sigma_{k}^{2}}\right|\le\frac{1}{\sigma_{k}^{2}} O_p(\sqrt{\log(np) \log(nK)}\|\hbtheta-\btheta\|_1) = O_{p}\left(\sqrt{\log(np) \log(nK)} \est\right),
	\]
	where in the last step we used Assumption (A1) and (A3).
\end{proof}

The next lemma is a concentration result on the covariate vectors $\x_i$ which will be used in our analysis.

\begin{lemma}
	\label{lem:(Concentration-of-Design} Let $\{\x_{i}\}_{i=1}^n$ satisfy Assumption (A2). Then for any fixed unit vector $\bxi$ we have 
	\[
	\P\left\{\sup_{\ell\in[p]} \left|\frac{1}{n}\sum_{i=1}^{n}\left|x_{i,\ell} \<\x_{i},\bxi\>\right|-\mathbb{E}\left[\left|x_{i,\ell} \<\x_i,\bxi\>\right|\right]\right|
	\ge t\right\} \le 2 p \exp\left( -c \min\left( 
	\frac{n t^2}{C^2},  
	\frac{n t}{C}
	\right) \right)\,,
	\]
	and 
	\[\mathbb{E}\left[|x_{i,\ell} \<\x_i,\bxi\>| \right]\le C,
	\]
	for some positive constant $C$.
\end{lemma}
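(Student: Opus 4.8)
The plan is to prove a uniform concentration bound over the $p$ coordinates together with a uniform bound on the mean. I would treat the two claims separately. For the mean bound, I would fix a coordinate $\ell$ and a unit vector $\bxi$ and show $\mathbb{E}[|x_{i,\ell}\<\x_i,\bxi\>|]\le C$. The key observation is that $x_{i,\ell}$ and $\<\x_i,\bxi\>$ are each sub-Gaussian with bounded norm: by Assumption (A2), writing $x_{i,\ell} = \<\bSigma^{1/2}\e_\ell,\bSigma^{-1/2}\x_i\>$ gives $\|x_{i,\ell}\|_{\psi_2}\le C_\Sigma^{1/2}\|\bSigma^{-1/2}\x_i\|_{\psi_2}=O(1)$, exactly as in the proof of Lemma \ref{lem:events}, equation~\eqref{eq:xir-sub-G}; the same reasoning applied to $\bxi$ gives $\|\<\x_i,\bxi\>\|_{\psi_2}=O(1)$. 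Then by Cauchy--Schwarz, $\mathbb{E}[|x_{i,\ell}\<\x_i,\bxi\>|]\le (\mathbb{E}[x_{i,\ell}^2])^{1/2}(\mathbb{E}[\<\x_i,\bxi\>^2])^{1/2}$, and each second moment is bounded because a sub-Gaussian variable with bounded $\psi_2$-norm has bounded variance. This yields the stated mean bound with a constant $C$ depending only on $C_\Sigma$ and the uniform sub-Gaussian constant.

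For the tail bound, I would fix $\ell$ and apply a Bernstein-type concentration inequality for sums of independent sub-exponential random variables. The summands $Z_i := |x_{i,\ell}\<\x_i,\bxi\>|$ are independent across $i$ by Assumption (A2), and each is sub-exponential: the product of two sub-Gaussian variables is sub-exponential, with $\|x_{i,\ell}\<\x_i,\bxi\>\|_{\psi_1}\le \|x_{i,\ell}\|_{\psi_2}\|\<\x_i,\bxi\>\|_{\psi_2}=O(1)$, and taking absolute values does not increase the sub-exponential norm. By the standard Bernstein inequality for centered sub-exponential variables (see, e.g., the tail-bound lemmas already invoked in the paper), one obtains for a single coordinate
\[
\P\left\{\left|\frac1n\sum_{i=1}^n Z_i - \mathbb{E}[Z_i]\right|\ge t\right\}\le 2\exp\left(-c\min\left(\frac{nt^2}{C^2},\frac{nt}{C}\right)\right)\,,
\]
where $C$ absorbs the uniform sub-exponential constant of the $Z_i$.

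To pass from a single coordinate to the supremum over $\ell\in[p]$, I would take a union bound over the $p$ coordinates, which multiplies the right-hand side by $p$ and gives exactly the claimed factor $2p\exp(-c\min(nt^2/C^2,nt/C))$. Note $\bxi$ is fixed (not supremized), so no further covering argument is needed; the only union is over the finitely many coordinate indices. The main obstacle here is bookkeeping rather than anything deep: one must verify that the sub-exponential norm of $Z_i$ is bounded \emph{uniformly} in both $i$ and $\ell$, so that a single constant $C$ works in every coordinate's Bernstein bound before the union is taken. This uniformity follows from Assumption (A2) (independence and uniformly bounded $\|\bSigma^{-1/2}\x_i\|_{\psi_2}$) together with the bounded operator norm $\|\bSigma\|_{\op}\le C_\Sigma$, which controls $\|x_{i,\ell}\|_{\psi_2}$ uniformly over $\ell$ via $\|\bSigma^{1/2}\e_\ell\|_2\le C_\Sigma^{1/2}$. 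Once that uniform constant is fixed, the mean bound and the union bound combine directly to give both displayed inequalities.
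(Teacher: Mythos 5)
Your proposal is correct and follows essentially the same route as the paper: bound $\|x_{i,\ell}\<\x_i,\bxi\>\|_{\psi_1}$ by the product of the two sub-Gaussian norms (uniformly in $i$ and $\ell$ via Assumption (A2) and $\|\bSigma\|_{\op}\le C_\Sigma$), apply Bernstein's inequality coordinatewise, and union bound over $\ell\in[p]$. The only cosmetic difference is the mean bound, where the paper reads $\E[|x_{i,\ell}\<\x_i,\bxi\>|]\le \|x_{i,\ell}\<\x_i,\bxi\>\|_{\psi_1}$ directly off the definition of the $\psi_1$ norm rather than using Cauchy--Schwarz; both arguments are valid and yield the same constant.
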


\begin{proof}
	To obtain the first result, we first note that $\left|x_{i,\ell} \<\x_i,\bxi\>\right|$
	is a product of two sub-Gaussian random variables and thus is a sub-exponential
	variable. Applying Theorem 2.8.1 of \citet{vershynin2018high} to a fixed $\ell\in[p]$, we get that for $C\ge \max_{i\in [n]}  \left\Vert x_{i,\ell} \<\x_i,\bxi\> \right\Vert_{\psi_1}$,
	
	\begin{align}\label{eq:probB}
	\mathbb{P} \left\{ \left| 
	\frac{1}{n}\sum_{i=1}^{n} \left|x_{i,\ell} \<\x_i,\bxi\>\right|-\mathbb{E}\left[\left|x_{i,\ell} \<\x_i,\bxi\>\right|\right] \right| \ge t \right\}
	\le 2 \exp\left( -c \min\left( 
	\frac{nt^2}{ C^2},  
	\frac{n t}{C}
	\right) \right),
	\end{align}
	where $\|\cdot\|_{\psi_1}$ is the sub-exponential norm of a random variable and $c>0$ is an absolute constant.
	
	To obtain our intended bound, we compute the sub-exponential norms 
	$\left\Vert x_{i,\ell} \<\x_{i},\bxi \> \right\Vert_{\psi_1}$. By Lemma 2.7.7 of \citet{vershynin2018high}, we have
	\begin{align}
		 \|x_{i,\ell}\<\x_{i},\bxi\>\|_{\psi_1} &\le \|x_{i,\ell}\|_{\psi_2} \|\<\x_{i},\bxi\>\|_{\psi_2}\nonumber\\
		 &=  \|\<\bSigma^{1/2} \e_\ell,\bSigma^{-1/2}\x_i\> \|_{\psi_2}
		 \|\<\bSigma^{1/2} \bxi,\bSigma^{-1/2}\x_i\> \|_{\psi_2}\nonumber\\
		 &\le \twonorm{\bSigma^{1/2} \e_\ell}\twonorm{\bSigma^{1/2} \bxi}
		  \|\bSigma^{-1/2}\x_i\|_{\psi_2}^2\nonumber\\
		 &\le C' C_{\Sigma} =: C\,,\label{eq:sub-exp-1}
	\end{align}
	where we used that $\|\bSigma\|_{{\rm op}} \le C_{\Sigma}$ and 
	$ \|\bSigma^{-1/2}\x_i\|_{\psi_2} \le C'$, for some constants
        $C', C_{\Sigma}$, per Assumption (A2).

To prove the second part of the lemma, we note that by definition
\[
\|x_{i,\ell} \<\x_i,\bxi\> \|_{\psi_1} = \sup_{q\ge 1} q^{-1} \E[|x_{i,\ell} \<\x_i,\bxi\> |^q]^{1/q}\,,
\]
by which we have
\[
\E[|x_{i,\ell} \<\x_i,\bxi\> |] \le \|x_{i,\ell} \<\x_i,\bxi\> \|_{\psi_1} \le C\,,
\]
for all $\ell\in[p]$.
\end{proof}

\begin{lemma}
	\label{lem:(Covariance-Matrix-Estimation} (Covariance Matrix Estimation
	Error) Let \begin{align*}
	\tilde{\bSigma}_{k}&=\frac{1}{n}\sum_{i=1}^{n}\frac{\gamma_{ik}(\hbtheta)}{\hsigma_{k}^{2}}\x_{i}\x_{i}^{T},\\
\bSigma_{k}^{*}&=\frac{1}{n}\sum_{i=1}^{n}\frac{\gamma_{ik}\left(\btheta\right)}{\sigma_{k}^{2}}\x_{i}\x_{i}^{T}, \\
       \bSigma_{k}&= \mathbb{E}\left[\frac{\gamma_{1k}\left(\btheta\right)}{\sigma_{k}^{2}}\x_{1}\x_{1}^{T}\right].
	\end{align*}
	Consider a set of unit-norm vectors $\mathcal{C} = \{\bxi_1,\dotsc, \bxi_{p^m}\}$, for a fixed integer $m\ge 1$. Under Assumption (A2), and on the event $\event$ defined in Lemma~\ref{lem:events}, we have 
	\[
	\sup_{\bxi\in\mathcal{C}} \infnorm{(\tilde{\bSigma}_{k}-\bSigma_{k}^{*})\bxi} = O_{p}\left(\est \sqrt{\log(np) \log(nK)}\right)\,,
	\]
	and
	\[
	\sup_{\bxi\in\mathcal{C}} \infnorm{(\bSigma_{k}-\bSigma_{k}^{*})\bxi} = O_{p}\left(\sqrt{\frac{\log p}{n}}\right).
	\]
\end{lemma}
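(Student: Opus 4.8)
The plan is to prove the two bounds separately, using the decomposition of the difference into a smooth perturbation of the weights $\gamma_{ik}/\sigma_k^2$ (for the first bound) and a concentration-of-measure argument (for the second). For the first inequality, observe that for any fixed unit vector $\bxi\in\mathcal{C}$ and any coordinate $\ell\in[p]$,
\begin{align*}
\left|\e_\ell^T(\tilde{\bSigma}_k - \bSigma_k^*)\bxi\right|
= \left|\frac1n\sum_{i=1}^n \left(\frac{\gamma_{ik}(\hbtheta)}{\hsigma_k^2} - \frac{\gamma_{ik}(\btheta)}{\sigma_k^2}\right) x_{i,\ell}\langle \x_i,\bxi\rangle\right|
\le \sup_{i,k}\left|\frac{\gamma_{ik}(\hbtheta)}{\hsigma_k^2} - \frac{\gamma_{ik}(\btheta)}{\sigma_k^2}\right|\cdot \frac1n\sum_{i=1}^n |x_{i,\ell}\langle\x_i,\bxi\rangle|.
\end{align*}
The first factor is $O_p(\sqrt{\log(np)\log(nK)}\,\est)$ by Lemma~\ref{lemma:mixture_estimation_error}. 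For the second factor, I would invoke Lemma~\ref{lem:(Concentration-of-Design}: with $t\asymp\sqrt{(\log p)/n}$, the empirical average $\frac1n\sum_i |x_{i,\ell}\langle\x_i,\bxi\rangle|$ concentrates around its mean, which is bounded by a constant $C$. Taking a union bound over the $p$ coordinates and over the $\bxi\in\mathcal{C}$ (of which there are at most $p^m$, a polynomial number), the second factor is $O_p(1)$ uniformly. Combining the two factors yields the stated rate.

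For the second inequality, the weights are already at the true parameter, so this is a pure concentration statement. I would again fix $\bxi\in\mathcal{C}$ and $\ell\in[p]$ and write
\[
\e_\ell^T(\bSigma_k^* - \bSigma_k)\bxi = \frac1n\sum_{i=1}^n \left(\frac{\gamma_{ik}(\btheta)}{\sigma_k^2} x_{i,\ell}\langle\x_i,\bxi\rangle - \E\Big[\frac{\gamma_{1k}(\btheta)}{\sigma_k^2} x_{1,\ell}\langle\x_1,\bxi\rangle\Big]\right),
\]
a centered average of i.i.d.\ terms. Since $\gamma_{ik}(\btheta)\in[0,1]$ is bounded and $\sigma_k^2$ is bounded below by Assumption (A3), each summand has sub-exponential norm controlled by $\|x_{i,\ell}\langle\x_i,\bxi\rangle\|_{\psi_1}\le C$, exactly as computed in the proof of Lemma~\ref{lem:(Concentration-of-Design} (equation~\eqref{eq:sub-exp-1}). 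Applying the sub-exponential Bernstein inequality with deviation $t\asymp\sqrt{(\log p)/n}$, then a union bound over the $p$ coordinates $\ell$ and the at-most-$p^m$ vectors $\bxi\in\mathcal{C}$, controls the supremum. The union-bound cost is only $\log(p^{m+1})=O(\log p)$ in the exponent, which is absorbed into the $\sqrt{(\log p)/n}$ rate.

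The main subtlety, which I would want to handle carefully rather than gloss over, is the interplay between the high-probability event $\event$ of Lemma~\ref{lem:events} (on which the boundedness $\max_i\infnorm{\x_i}\le C\sqrt{\log(np)}$ and $\max_{i,k}|y_i-\x_i^T\bbeta_k|\le C\sqrt{\log(nK)}$ hold) and the concentration inequalities, which are stated for genuinely sub-Gaussian/sub-exponential variables. For the first bound, Lemma~\ref{lemma:mixture_estimation_error} is already proved on $\event$, so the argument is consistent; for the second bound the concentration is unconditional but must be combined with the $O_p$ statement via the fact that $\P(\event)\ge 1-4/n^2\to1$. I expect the genuinely delicate point to be verifying that the $\sup$ over $\mathcal{C}$ introduces only a polynomial (hence logarithmic-in-exponent) union-bound factor and not anything that degrades the rate; this is where the assumption $|\mathcal{C}|\le p^m$ for fixed $m$ is essential, and I would state the union bound explicitly to make the $\log p$ accounting transparent.
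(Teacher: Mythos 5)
Your proposal is correct and follows essentially the same route as the paper's proof: the same factorization of the first bound into the uniform weight-perturbation term (Lemma~\ref{lemma:mixture_estimation_error}) times the empirical average controlled by Lemma~\ref{lem:(Concentration-of-Design}, and the same sub-exponential Bernstein-plus-union-bound argument over the $p^{m+1}$ pairs $(\ell,\bxi)$ for the second bound. Your explicit attention to the role of the event $\event$ and the polynomial cardinality of $\mathcal{C}$ matches what the paper does implicitly via its choice $t = C\sqrt{2m\log(p)/(cn)}$.
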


\begin{proof}
	We have 
	\begin{align*}
	\sup_{\bxi\in\mathcal{C}} \infnorm{(\tilde{\bSigma}_{k}-\bSigma_{k}^{*})\bxi} & \le\sup_{\ell\in[p], \bxi\in\mathcal{C}} \frac{1}{n}\sum_{i=1}^{n}\left|\frac{\gamma_{ik}(\hbtheta)}{\hsigma_{k}^{2}}-\frac{\gamma_{ik}(\btheta)}{\sigma_{k}^{2}}\right|\left|x_{i,\ell}\right|\left|\<\x_i,\bxi\>\right|\\
	& \le\sup_{i\in[n]}\left|\frac{\gamma_{ik}(\hbtheta)}{\hsigma_{k}^{2}}-\frac{\gamma_{ik}(\btheta)}{\sigma_{k}^{2}}\right| \cdot \sup_{\ell\in[p], \bxi\in\mathcal{C}} \frac{1}{n}\sum_{i=1}^{n}\left|x_{i,\ell}\right|\left|\<\x_i,\bxi\>\right|.
	\end{align*}
	For the first component, we leverage Lemma \ref{lemma:mixture_estimation_error}, which proves a $O_p(\est\sqrt{\log(np)\log(nK)})$ bound.
	For the second component, we
	use Lemma \ref{lem:(Concentration-of-Design}  with $t = C\sqrt{\frac{2m\log(p)}{c n}}$ and union bound over the set $\mathcal{C}$, which gives 
	\[
	\sup_{\ell\in[p], \bxi\in\mathcal{C}} \frac{1}{n}\sum_{i=1}^{n}\left|x_{i,\ell}\right|\left|\<\x_i,\bxi\>\right| =O_{p}\left(1+ \sqrt{\frac{\log p}{n}}\right) = O_p(1).
	\]
	Putting them together, we have
	\begin{align*}
	\sup_{\bxi\in\mathcal{C}} \infnorm{(\tilde{\bSigma}_{k}-\bSigma_{k}^{*})\bxi}&=O_{p}\left(\est\sqrt{\log(np) \log(nK)}\right) \,.
	\end{align*}

	To prove the second result, we have
	\[
	\sup_{\ell\in[p], \bxi\in\mathcal{C}} \left\Vert (\bSigma_{k}^{*}-\bSigma_{k})\bxi \right\Vert _{\infty}\le\sup_{\ell\in[p], \bxi\in\mathcal{C}} \left|\frac{1}{n}\sum_{i=1}^{n}\left\{\frac{\gamma_{ik}(\btheta)}{\sigma_{k}^{2}}x_{i,\ell}\<\x_i,\bxi\>-\mathbb{E}\left[\frac{\gamma_{ik}(\btheta)}{\sigma_{k}^{2}}x_{i,\ell} \<\x_i,\bxi\>\right]\right\}\right|.
	\]
	As shown in~\eqref{eq:sub-exp-1}, $x_{i,\ell}\<\x_i,\bxi\>$ have bounded sub-exponential norm and since $\frac{\gamma_{ik}(\btheta)}{\sigma_{k}^{2}}$ is bounded, we have that $\frac{\gamma_{ik}(\btheta)}{\sigma_{k}^{2}}x_{i,\ell}\<\x_i,\bxi\>$
	has bounded sub-exponential norm. Therefore, by Theorem 2.8.1 of \citet{vershynin2018high} (similar to Lemma~\ref{lem:(Concentration-of-Design})
	and a union bound over $\ell\in[p]$ and $\bxi\in\mathcal{C}$, we obtain,
	\[
	\sup_{\ell\in[p], \bxi\in\mathcal{C}} \left|\frac{1}{n}\sum_{i=1}^{n}\left\{\frac{\gamma_{ik}(\btheta)}{\sigma_{k}^{2}}x_{i,\ell}\<\x_i,\bxi\>-\mathbb{E}\left[\frac{\gamma_{ik}(\btheta)}{\sigma_{k}^{2}}x_{i,\ell} \<\x_i,\bxi\>\right]\right\}\right|=O_{p}\left(\sqrt{\frac{\log p}{n}}\right),
	\]
	which completes the proof of the second part.
\end{proof}

\begin{lemma}
	\label{lem:(Fisher-Information-Estimation} (Fisher Information Estimation Error) Let 
	\begin{align*}
	\SMI &=\frac{1}{n}\sum_{i=1}^{n}\frac{\gamma^2_{ik}(\hbtheta)}{\hsigma^2_{k}}\frac{\left(y_{i}-\x_{i}^{T}\hbbeta_{k}\right)^2}{\hsigma_{k}^{2}}\x_{i}\x_{i}^{T},\\
	\SPMI &=\frac{1}{n}\sum_{i=1}^{n}\frac{\gamma^2_{ik}(\btheta)}{\sigma^2_{k}}\frac{\left(y_{i}-\x_{i}^{T}\bbeta_{k}\right)^2}{\sigma_{k}^{2}}\x_{i}\x_{i}^{T},\\
	\PMI &=\mathbb{E}\left[\frac{\gamma^2_{1k}(\btheta)}{\sigma_{k}^{2}}\frac{\left(y_{1}-\x_{1}^{T}\bbeta_{k}\right)^{2}}{\sigma_{k}^{2}}\x_{1}\x_{1}^{T}\right].
	\end{align*}
	Under Assumptions (A1), (A2) and (A3), We have
	\[
	\infMnorm{\SMI-\SPMI}=O_{p}\left(\sqrt{\log^3(np)\log(nK)}\est\right),
	\]
	and
	\[
	\infMnorm{\SPMI-\PMI}=O_p\left(\log(np)\sqrt{\frac{\log(p)}{n}}\right).
	\]
\end{lemma}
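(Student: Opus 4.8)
The plan is to prove the two bounds separately, mirroring the structure of Lemma~\ref{lem:(Covariance-Matrix-Estimation} but accounting for the additional residual factor $(y_i-\x_i^T\bbeta_k)^2$ that distinguishes the Fisher information from the weighted covariance. For the first bound I would argue entrywise. Writing $g_{ik}(\btheta):=\frac{\gamma_{ik}^2(\btheta)}{\sigma_k^4}(y_i-\x_i^T\bbeta_k)^2$, each entry satisfies
\[
(\SMI-\SPMI)_{\ell m}=\frac1n\sum_{i=1}^n\big(g_{ik}(\hbtheta)-g_{ik}(\btheta)\big)x_{i\ell}x_{im},
\]
so that $\infMnorm{\SMI-\SPMI}\le \sup_{i,k}|g_{ik}(\hbtheta)-g_{ik}(\btheta)|\cdot\max_{\ell,m}\frac1n\sum_i|x_{i\ell}||x_{im}|$. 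On the event $\event$ of Lemma~\ref{lem:events}, where $\infnorm{\x_i}\le C\sqrt{\log(np)}$ uniformly in $i$, the design sum is $O_p(\log(np))$.

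The scalar perturbation $\sup_{i,k}|g_{ik}(\hbtheta)-g_{ik}(\btheta)|$ I would control by a product-rule (telescoping) decomposition across the three factors $\gamma_{ik}^2$, $\sigma_k^{-4}$, and $(y_i-\x_i^T\bbeta_k)^2$, equivalently by bounding $\infnorm{\nabla_\btheta g_{ik}}$ along the segment joining $\btheta$ to $\hbtheta$ and pairing it with $\onenorm{\hbtheta-\btheta}=O_p(\est)$ from Assumption (A1). On $\event$ every factor is controlled: $\gamma_{ik}(\btheta),\gamma_{ik}(\hbtheta)\in[0,1]$ with $|\gamma_{ik}(\hbtheta)-\gamma_{ik}(\btheta)|=O_p(\sqrt{\log(np)\log(nK)}\,\est)$ by Lemma~\ref{lemma:lipschitz_gamma}; $|\hsigma_k-\sigma_k|=O_p(\est)$ with $\sigma_k$ bounded away from $0$ by (A3); and $|y_i-\x_i^T\bbeta_k|\le C\sqrt{\log(nK)}$, so perturbing $\bbeta_k$ changes the squared residual by $O_p(\sqrt{\log(np)\log(nK)}\,\est)$. (Lemma~\ref{lemma:mixture_estimation_error} already packages the $\gamma/\sigma^2$ part of this computation.) Collecting the terms and multiplying by the $O_p(\log(np))$ design sum yields the stated $O_p(\sqrt{\log^3(np)\log(nK)}\,\est)$ rate; tracking the precise powers of $\log(np)$ and $\log(nK)$ (freely using $\log(nK)\le\log(np)$) is the only tedious bookkeeping in this part.

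The second bound is a concentration statement for the i.i.d.\ averages defining $\SPMI$, and this is where the main obstacle lies. The summands $Z_i^{(\ell m)}:=\frac{\gamma_{ik}^2(\btheta)}{\sigma_k^4}(y_i-\x_i^T\bbeta_k)^2x_{i\ell}x_{im}$ are i.i.d.\ in $i$ with mean $(\PMI)_{\ell m}$, but unlike the covariance summands in Lemma~\ref{lem:(Covariance-Matrix-Estimation} they are \emph{not} sub-exponential: the residual square and the product $x_{i\ell}x_{im}$ are each sub-exponential, so their product is only sub-Weibull of order $1/2$, with heavier tails, and Bernstein's inequality (Theorem~2.8.1 of \citet{vershynin2018high}) does not apply directly. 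The cleanest remedy is to truncate onto $\event$: there $|Z_i^{(\ell m)}|\le M$ with $M\asymp\log(np)\log(nK)$, so a bounded Bernstein inequality applies with a variance proxy $v=O(1)$ (a finite eighth-order moment of the sub-Gaussian design under (A2), bounded uniformly in $\ell,m$ by Cauchy--Schwarz). Taking the deviation $t\asymp\sqrt{\log p/n}$ in the variance-dominated regime --- valid because $M^2\log p/n\to0$ under the hypothesis $\log p=o(n^{1/4}/\sqrt{\log n})$ --- and a union bound over the $p^2$ entries gives concentration of the stated order, with the mild $\log(np)$ prefactor a conservative carryover of the truncation level. It remains only to check that the truncation bias $\E[Z^{(\ell m)}\mathbbm{1}_{\event^c}]$ is negligible, which follows from $\P(\event^c)\le 4/n^2$ (Lemma~\ref{lem:events}) together with the finite moments of $Z^{(\ell m)}$ via Cauchy--Schwarz.
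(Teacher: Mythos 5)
Your first bound follows the paper's decomposition exactly --- a uniform scalar perturbation times the design sum $\sup_{\ell,m}\frac1n\sum_{i=1}^n|x_{i\ell}||x_{im}|$ --- but you bound that design sum by $O_p(\log(np))$ using the pointwise control $\infnorm{\x_i}\le C\sqrt{\log(np)}$ on $\event$, whereas the paper applies Lemma~\ref{lem:(Concentration-of-Design} (with $\bxi=\e_r$ and a union bound over $\ell,r$) to get $O_p(1)$: the summands $|x_{i\ell}x_{ir}|$ are sub-exponential with $O(1)$ mean, so their average concentrates around a constant. With your cruder bound, multiplying the dominant scalar term $O_p\bigl(\sqrt{\log(np)\log(nK)}\,\est\cdot\log(nK)\bigr)$ by $O_p(\log(np))$ overshoots the stated $O_p\bigl(\sqrt{\log^3(np)\log(nK)}\,\est\bigr)$ by a factor of $\log(nK)$, and invoking $\log(nK)\le\log(np)$ only makes the exponent worse. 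This is harmless for the downstream $o_p(1)$ conclusions in Proposition~\ref{lemma:fisher_concentration}, but it means the rate as stated is not quite established; the fix is simply to reuse Lemma~\ref{lem:(Concentration-of-Design} here as is done in Lemma~\ref{lem:(Covariance-Matrix-Estimation}.

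For the second bound you take a genuinely different route. The paper pulls $\sup_i(y_i-\x_i^T\bbeta_k)^2=O(\log(np))$ out of the centered average and applies sub-exponential Bernstein to $\frac{\gamma^2_{ik}(\btheta)}{\sigma_k^4}x_{i\ell}x_{ir}$ at deviation level $t/(C^2\log(np))$, which is where its $\log(np)$ prefactor originates. You instead observe --- correctly --- that the full summand is a product of two sub-exponential factors and hence only sub-Weibull of order $1/2$, so Bernstein does not apply directly; you truncate at the level $M\asymp\log(np)\log(nK)$ attained on $\event$, apply a bounded Bernstein inequality in the variance-dominated regime (valid under $\log p=o(n^{1/4}/\sqrt{\log n})$), and control the truncation bias via $\P(\event^c)\le 4/n^2$ and Cauchy--Schwarz. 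This is arguably the more careful argument: the paper's factoring step silently compares the empirical average against $(y_i-\x_i^T\bbeta_k)^2\,\E\bigl[\tfrac{\gamma^2_{ik}}{\sigma_k^4}x_{i\ell}x_{ir}\bigr]$ rather than the true mean of the full summand, a mismatch your truncation argument avoids. One small point to tidy: truncate each summand individually, i.e.\ work with $Z_i\mathbbm{1}\{|Z_i|\le M\}$, rather than multiplying by $\mathbbm{1}_{\event}$, since $\event$ depends on the entire sample and would break the independence needed for Bernstein.
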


\begin{proof}
	We have 
	\begin{align*}
	& \infMnorm{\SMI-\SPMI}\\
	& \le\sup_{\ell,r\in[p]}\frac{1}{n}\sum_{i=1}^{n}\left|\frac{\gamma^2_{ik}(\hbtheta)}{\hsigma_{k}^{2}}\frac{\left(y_{i}-\x_{i}^{T}\hbbeta_{k}\right)^{2}}{\hsigma_{k}^{2}}-\frac{\gamma^2_{ik}(\btheta)}{\sigma_{k}^{2}}\frac{\left(y_{i}-\x_{i}^{T}\bbeta_{k}\right)^{2}}{\sigma_{k}^{2}}\right|\left|x_{i,\ell}\right|\left|x_{i,r}\right|\\
	& \le\left(\sup_{i\in[n]}\left|\frac{\gamma^2_{ik}(\hbtheta)}{\hsigma_{k}^{2}}\frac{\left(y_{i}-\x_{i}^{T}\hbbeta_{k}\right)^{2}}{\hsigma_{k}^{2}}-\frac{\gamma^2_{ik}(\btheta)}{\sigma_{k}^{2}}\frac{\left(y_{i}-\x_{i}^{T}\bbeta_{k}\right)^{2}}{\sigma_{k}^{2}}\right|\right)\left(\sup_{\ell,r\in[p]}\frac{1}{n}\sum_{i=1}^{n}\left|x_{i,\ell}\right|\left|x_{i,r}\right|\right)\\
	& \le\left(\underbrace{\sup_{i\in[n]}\left|\frac{\gamma^2_{ik}(\hbtheta)}{\sigma_{k}^{4}} \left(y_{i}-\x_{i}^{T}\hbbeta_{k}\right)^{2}\right|\left|1-\frac{\sigma_{k}^{4}}{\hsigma_{k}^{4}}\right|}_{(a)}+\underbrace{\sup_{i\in[n]}\frac{1}{{\sigma_{k}^{4}}} \left|{\gamma^2_{ik}(\hbtheta)\left(y_{i}-\x_{i}^{T}\hbbeta_{k}\right)^{2}-\gamma^2_{ik}(\btheta)\left(y_{i}-\x_{i}^{T}\bbeta_{k}\right)^{2}}\right|}_{(b)}\right)\\
	&\qquad\ \ \cdot \underbrace{\left(\sup_{\ell,r\in[p]}\frac{1}{n}\sum_{i=1}^{n}\left|x_{i,\ell}\right|\left|x_{i,r}\right|\right)}_{(c)}.
	\end{align*}
	
	On the event $\event$ we have that
	\begin{align}
	\sup_{i\in[n],k\in[K]} |y_i - \x_i^T\hbbeta_k| &\le 
	\sup_{i\in[n],k\in[K]}|y_i - \x_i^T\bbeta_k| + \sup_{i\in[n],k\in[K]}
	|\x_i^T(\bbeta_k - \hbbeta_k)|\nonumber\\
	&\le C\sqrt{\log(nK)} + (\sup_{i\in[n]} \infnorm{\x_i}) \left(\sup_{k\in[K]}
	\onenorm{\bbeta_k -\hbbeta_k}\right)\nonumber\\
	&\le C\left(\sqrt{\log(nK)} + \sqrt{\log(np) }\est\right) \nonumber\\
	&= O\left(\sqrt{\log(np)}\right)\,.\label{eq:resid}
	\end{align}
	Therefore part $(a)$ can be bounded as follows:
	\[
	\sup_{i\in[n]}\left|\frac{\gamma^2_{ik}(\hbtheta)\left(y_{i}-\x_{i}^{T}\hbbeta_{k}\right)^{2}}{\sigma_{k}^{4}}\right|\left|1-\frac{\sigma_{k}^{4}}{\hsigma_{k}^{4}}\right|\le 
	 \frac{1}{\sigma_k^4}\left|1-\frac{\sigma_{k}^{4}}{\hsigma_{k}^{4}}\right| \sup_{i\in[n]} \left(y_i - \x_i^T \hbbeta_k\right)^2 = O_{p}\left(\est \log(np)\right),
	\]
	where we used that $\gamma_{i,k}(\hbtheta) \le 1$ and $|\hsigma_k - \sigma_k| = O_p(\est)$ per Assumption (A1). 
	
	 For $(b)$, we  write
	 \begin{align*}
	&\sup_{i\in[n]}\frac{1}{\sigma_{k}^{4}}\left|{\gamma^2_{ik}(\hbtheta)\left(y_{i}-\x_{i}^{T}\hbbeta_{k}\right)^{2}-\gamma^2_{ik}(\btheta)\left(y_{i}-\x_{i}^{T}\bbeta_{k}\right)^{2}}\right| \\
	& = \sup_{i\in[n]} \frac{1}{\sigma_{k}^{4}}\left|{\gamma^2_{ik}(\hbtheta)\left(y_{i}-\x_{i}^{T}\hbbeta_{k}\right)^{2} -\gamma^2_{ik}(\hbtheta)\left(y_{i}-\x_{i}^{T}\bbeta_{k}\right)^{2} + \gamma^2_{ik}(\hbtheta)\left(y_{i}-\x_{i}^{T}\bbeta_{k}\right)^{2} -\gamma^2_{ik}(\btheta)\left(y_{i}-\x_{i}^{T}\bbeta_{k}\right)^{2}}\right| \\
	& \le \sup_{i\in[n]} \frac{1}{\sigma_k^4} \left| (\gamma^2_{ik}(\hbtheta) -\gamma^2_{ik}(\btheta)) \left(y_{i}-\x_{i}^{T}\bbeta_{k}\right)^{2}\right| + \sup_{i\in[n]}\frac{1}{\sigma_{k}^{4}}\left|{\gamma^2_{ik}(\hbtheta)\left(y_{i}-\x_{i}^{T}\hbbeta_{k}\right)^{2} -\gamma^2_{ik}(\hbtheta)\left(y_{i}-\x_{i}^{T}\bbeta_{k}\right)^{2} }\right| \\
	&=\sup_{i\in[n]} \frac{1}{\sigma_{k}^{4}}\left|{\left(\gamma^2_{ik}(\hbtheta)-\gamma^2_{ik}(\btheta)\right)\left(y_{i}-\x_{i}^{T}\bbeta_{k}\right)^{2}}\right|+\sup_{i\in[n]}\frac{1}{\sigma_k^4}\left|\gamma^2_{ik}(\hbtheta)\left(\x_{i}^{T}\left(\hbbeta_{k}-\bbeta_{k}\right)\right)\left(y_{i}-\x_{i}^{T}\bbeta_{k}+y_{i}-\x_{i}^{T}\hbbeta_{k}\right)\right|\\
	& \le\sup_{i\in[n]}\frac{2}{\sigma_k^4} |\gamma_{ik}(\hbtheta)-\gamma_{ik}(\btheta)| \left|y_i-\x_i^T\bbeta_k\right|^2+\sup_{i\in[n]}\frac{1}{\sigma_k^4} \left|\x_{i}^{T}\left(\hbbeta_{k}-\bbeta_{k}\right)\right| \left|y_{i}-\x_{i}^{T}\bbeta_{k}+y_{i}-\x_{i}^{T}\hbbeta_{k}\right|\\
	& \le\sup_{i\in[n]}\frac{2}{\sigma_k^4} |\gamma_{ik}(\hbtheta)-\gamma_{ik}(\btheta)| \left|y_i-\x_i^T\bbeta_k\right|^2+\sup_{i\in[n]}\frac{1}{\sigma_k^4} \infnorm{\x_i} \onenorm{\hbbeta_{k}-\bbeta_{k}} \left|y_{i}-\x_{i}^{T}\bbeta_{k}+y_{i}-\x_{i}^{T}\hbbeta_{k}\right|\\
	&= O(\log^{3/2}(np) \sqrt{\log(nK)} \est) + O(\sqrt{\log(nK)}\sqrt{\log(np)}\est)\nonumber\\
	&= O(\log^{3/2}(np) \sqrt{\log(nK)} \est) \,,
	\end{align*}
	where in the penultimate step, we bounded the first term using
        Lemma \ref{lemma:lipschitz_gamma} together with Assumption
        (A1) and we bounded the second term using definition of event
        $\event$ along with~\eqref{eq:resid}. 
	
Finally, for $(c)$, we use
	Lemma \ref{lem:(Concentration-of-Design} with $\bxi= \e_r$ for $r\in[p]$ followed by a union bound over $r$ to get
	\[
	\sup_{\ell,r\in[p]}\frac{1}{n}\sum_{i=1}^{n}\left|x_{i,\ell}\right|\left|x_{i,r}\right|=O_{p}(1).
	\]
	Putting $(a)$, $(b)$, and $(c)$ together, we have
	\[
	\infMnorm{\SMI-\SPMI}=O_p(\log^{3/2}(np)\sqrt{\log(nK)} \est) .
	\]
	
	To prove the second result, we have
	\begin{align}
	\P\left(\infMnorm{\SPMI-\PMI}\ge t\right) 
	&\le \P\left(\infMnorm{\SPMI- \PMI}\ge t;\event\right) + \P(\event^c)\nonumber\\
	&\le \P\left(\infMnorm{\SPMI-\PMI}\ge t;\event\right) + \frac{4}{n^2}\,,\label{eq:event-c-1}
	\end{align}
	where we used the result of Lemma~\ref{lem:events} to bound $\P(\event^c)$.
	We next write
	\begin{align}
	& \P\left(\infMnorm{\SPMI-\PMI}\ge t;\event \right)\nonumber\\
	& = \P\left(\sup_{\ell,r\in[p]}\left|\frac{1}{n}\sum_{i=1}^{n}\frac{\gamma^2_{ik}(\btheta)}{\sigma_{k}^{4}}{\left(y_{i}-\x_{i}^{T}\bbeta_{k}\right)^{2}}x_{i,\ell}x_{i,r}-\mathbb{E}\left[\frac{\gamma^2_{ik}(\btheta)}{\sigma_{k}^{4}}{\left(y_{i}-\x_{i}^{T}\bbeta_{k}\right)^{2}}x_{i,\ell}x_{i,r}\right]\right|\ge t;\event\right)\nonumber \\
	&\le \P\left(\Big(\sup_{i\in[n]} (y_i-\x_i^T\bbeta_k)^2\Big)\cdot\sup_{\ell,r\in[p]} \frac{1}{n}  \sum_{i=1}^{n}\left|\frac{\gamma^2_{ik}(\btheta)}{\sigma_{k}^{4}} x_{i,\ell}x_{i,r}-\mathbb{E}\left[\frac{\gamma^2_{ik}(\btheta)}{\sigma_{k}^{4}}x_{i,\ell}x_{i,r}\right]\right|\ge t;\event\right)\nonumber\\
	&\le \P\left(\sup_{\ell,r\in[p]} \frac{1}{n}  \sum_{i=1}^{n}\left|\frac{\gamma^2_{ik}(\btheta)}{\sigma_{k}^{4}} x_{i,\ell}x_{i,r}-\mathbb{E}\left[\frac{\gamma^2_{ik}(\btheta)}{\sigma_{k}^{4}}x_{i,\ell}x_{i,r}\right]\right|\ge \frac{t}{C^2\log(np)};\event\right)\nonumber\\
	&\le \P\left(\sup_{\ell,r\in[p]} \frac{1}{n}  \sum_{i=1}^{n}\left|\frac{\gamma^2_{ik}(\btheta)}{\sigma_{k}^{4}} x_{i,\ell}x_{i,r}-\mathbb{E}\left[\frac{\gamma^2_{ik}(\btheta)}{\sigma_{k}^{4}}x_{i,\ell}x_{i,r}\right]\right|\ge \frac{t}{C^2\log(np)}\right)\,,\label{eq:event-c-2}
	\end{align}
	where the second last step follows from definition of event $\event$.
	
	Now by using equation~\eqref{eq:sub-exp-1} with $\bxi = \e_r$, we have that $x_{i,\ell}x_{i,r}$ is sub-exponential and since
	 $\frac{\gamma^2_{ik}(\btheta)}{\sigma_{k}^{2}}$
	is  bounded, we get that $\frac{\gamma^2_{ik}(\btheta)}{\sigma_{k}^{4}}x_{i,\ell}x_{i,r}$
	is a sub-exponential random variable. Thus, by Bernstein's inequality (see e.g.~\cite[Theorem 2.8.1]{vershynin2018high})
	and union bound over the $p^{2}$ choices of $\ell,r$, we obtain,
	\[
	\P\left(\sup_{\ell,r\in[p]} \frac{1}{n}  \sum_{i=1}^{n}\left|\frac{\gamma^2_{ik}(\btheta)}{\sigma_{k}^{4}} x_{i,\ell}x_{i,r}-\mathbb{E}\left[\frac{\gamma^2_{ik}(\btheta)}{\sigma_{k}^{4}}x_{i,\ell}x_{i,r}\right]\right|\ge \frac{t}{C^2\log(np)}\right) \le 2p^2 \exp\left(-c \frac{nt^2}{\log^2(np)}\right)\,.
	\]
	Therefore by choosing $t = 2\sqrt{\frac{\log(p)}{cn}}\log(np)$ we obtain
	\begin{align}
	\P\left(\sup_{\ell,r\in[p]} \frac{1}{n}  \sum_{i=1}^{n}\left|\frac{\gamma^2_{ik}(\btheta)}{\sigma_{k}^{4}} x_{i,\ell}x_{i,r}-\mathbb{E}\left[\frac{\gamma^2_{ik}(\btheta)}{\sigma_{k}^{4}}x_{i,\ell}x_{i,r}\right]\right|\ge \frac{2}{C^2} \sqrt{\frac{\log(p)}{cn}}\right) \le 2p^{-2}\,.\label{eq:event-c-3}
	\end{align}
\end{proof}

Combining equations~\eqref{eq:event-c-1}, \eqref{eq:event-c-2} and \eqref{eq:event-c-3} we arrive at

\[
\infMnorm{\SPMI- \PMI} = O_p\left(\log(np)\sqrt{\frac{\log(p)}{n}}\right)\,,
\]
which completes the proof of the second claim.

\bibliographystyle{agsm}
\bibliography{refs}	
\end{document}